\documentclass[a4paper]{article}

\pdfoutput=1

\usepackage{amsmath, amsthm, amssymb}

\usepackage{microtype}
\usepackage[hyphens]{url}

\usepackage{colonequals}
\usepackage{caption}
\usepackage[position=b]{subcaption}
\usepackage[section]{placeins}
\usepackage{mathtools}
\usepackage{appendix}
\usepackage{listofitems}
\usepackage[dvipsnames]{xcolor}

\definecolor{codegreen}{rgb}{0,0.6,0}
\definecolor{codepurple}{rgb}{0.58,0,0.82}

\usepackage{listings}
\lstset{%
  language=Python,
  commentstyle=\color{codegreen},
  keywordstyle=\color{magenta},
  stringstyle=\color{codepurple},
  basicstyle=\fontfamily{pcr}\selectfont\scriptsize,  
  commentstyle=\bfseries,
  numbers=left,
}


\makeatletter
\let\c@equation\c@subsection
\let\c@figure\c@equation
\let\c@table\c@equation
\makeatother

\numberwithin{equation}{section}
\numberwithin{figure}{section}
\numberwithin{table}{section}

\theoremstyle{plain}
\newtheorem{theorem}[equation]{Theorem}
\newtheorem{corollary}[equation]{Corollary}
\newtheorem{lemma}[equation]{Lemma}

\newtheorem{proposition}[equation]{Proposition}

\theoremstyle{definition}
\newtheorem{definition/}[equation]{Definition}
\newtheorem{example/}[equation]{Example}
\newtheorem{question/}[equation]{Question}
\newtheorem{algorithm/}[equation]{Algorithm}

\theoremstyle{remark}
\newtheorem{remark/}[equation]{Remark}

\newtheoremstyle{dotless}{}{}{}{}{\bfseries}{}{ }{}
\theoremstyle{dotless}

\newenvironment{definition}
  {%
   \pushQED{\qed}\begin{definition/}}
  {\popQED\end{definition/}}

\newenvironment{example}
  {%
   \pushQED{\qed}\begin{example/}}
  {\popQED\end{example/}}

\newenvironment{question}
  {%
   \pushQED{\qed}\begin{question/}}
  {\popQED\end{question/}}

\newenvironment{remark}
  {%
   \pushQED{\qed}\begin{remark/}}
  {\popQED\end{remark/}}

\usepackage[outline]{contour}
\contourlength{0.1em}
\newcommand{\outline}[1]{\contour*{white}{#1}}

\usepackage{tikz}
\usetikzlibrary{calc}
\usetikzlibrary{patterns}
\usetikzlibrary{fadings}
\usetikzlibrary{shapes.misc}
\usetikzlibrary{shadows.blur}
\usetikzlibrary{decorations.pathreplacing}
\usetikzlibrary{decorations.pathmorphing}
\usetikzlibrary{decorations.markings}
\usetikzlibrary{shapes.geometric}
\tikzset{dot/.style={draw,shape=circle,fill=black,scale=0.4}}
\tikzset{
every picture/.style={
  execute at end picture={
    \path (current bounding box.south west) +(-0.05, -0.05) (current bounding box.north east) +(0.05, 0.05);
    }
  }
}


\newcommand{\bdy}{\partial}
\newcommand{\from}{\colon}
\newcommand{\cross}{\times}

\newcommand{\defeq}{\colonequals}

\DeclareMathOperator{\MCG}{MCG}
\DeclareMathOperator{\Mod}{Mod}

\DeclareMathOperator{\GL}{GL}
\DeclareMathOperator{\intersection}{\iota}
\DeclareMathOperator{\move}{\textsc{Mov}}
\DeclareMathOperator{\update}{\textsc{UMov}}
\DeclareMathOperator{\ind}{index} 
\DeclareMathOperator{\shift}{shift} 
\DeclareMathOperator{\trunc}{trunc} 
\DeclareMathOperator{\DCoord}{\textsc{DeltaCoordinate}}
\DeclareMathOperator{\poly}{poly}

\newcommand{\tightness}[1]{\# #1}
\newcommand{\complexity}[1]{||#1||}
\newcommand{\complexitybnd}[1]{\lceil \! \lceil #1 \rceil \! \rceil}
\newcommand{\sharedsize}[1]{||#1||^\cap}
\newcommand{\sharedsizebnd}[1]{\lceil \! \lceil #1 \rceil \! \rceil^\cap}
\newcommand{\uncertainty}[1]{\epsilon( #1 )}
\newcommand{\pair}[1]{\langle #1 \rangle}
\newcommand{\length}[1]{\ell(#1)}

\newcommand{\closure}[1]{\overline{#1}}

\newcommand{\fdiv}{%
  \mathbin{%
    \mkern-6mu
    \begin{tikzpicture}[baseline = -0.5ex]
      \node[inner sep=0pt,outer sep=0pt,rotate=-110] at (0,0) {$\xrightarrow{}$};
    \end{tikzpicture}
    \mkern-3mu
  }%
}%
\newcommand{\cdiv}{%
  \mathbin{%
    \mkern-6mu
    \begin{tikzpicture}[baseline = -0.5ex]
      \node[inner sep=0pt,outer sep=0pt,rotate=-110] at (0,0) {$\xleftarrow{}$};
    \end{tikzpicture}
    \mkern-3mu
  }%
}%

\newcommand{\calC}{\mathcal{C}}
\newcommand{\Tau}{\boldsymbol{\tau}}
\newcommand{\Id}{\textnormal{Id}}
\newcommand{\II}{\mathbb{I}}
\newcommand{\ZZ}{\mathbb{Z}}

\newcommand{\bee}{\mathcal{B}}  
\newcommand{\cee}{\mathcal{C}}
\newcommand{\dee}{\mathcal{D}}
\newcommand{\eee}{\mathcal{E}}
\newcommand{\fee}{\mathcal{F}}

\newcommand{\inlineand}{\quad \textrm{and} \quad}
\newcommand{\inlineQED}{\pushQED{\qed} \qedhere \popQED}
\newcommand{\naive}{na\"{i}ve}

\newcommand{\blfootnote}[1]{%
  \begingroup
  \renewcommand\thefootnote{}\footnote{#1}%
  \addtocounter{footnote}{-1}%
  \endgroup
}

\usepackage[pdfborderstyle={},pdfborder={0 0 0},pagebackref,pdftex]{hyperref}

\renewcommand*{\backref}[1]{}
\renewcommand*{\backrefalt}[4]{
  \ifcase #1 %
   [No citations.]%
  \or
   [#2]%
  \else
   [#2]%
  \fi
}

\usepackage{algorithm}
\usepackage{algpseudocode}

\algnewcommand{\IfThen}[2]{\State \algorithmicif\ #1\ \algorithmicthen\ #2}

\usepackage[capitalise,noabbrev]{cleveref}

\crefname{conclusion}{Conclusion}{Conclusions}

\usepackage{booktabs}
\usepackage{longtable}
\newcounter{rown}
\newcommand{\row}{\stepcounter{rown}\arabic{rown}}

\usepackage[textsize=tiny]{todonotes}

\pgfdeclareradialshading{fadeoutdisk}{\pgfpointorigin}{
  color(0)=(pgftransparent!100);
  color(22)=(pgftransparent!100);
  color(24)=(pgftransparent!0);
  color(30)=(pgftransparent!0)%
}
\pgfdeclarefading{fade out disk}{\pgfuseshading{fadeoutdisk}}

\newcommand{\switch}[3]{
\begin{tikzpicture}[scale=1,very thick]
\draw [#1] (-1.41, 0) to (0, 0);
\draw [#2] (0, 0) to [in=240,out=0] (1, 1);
\draw [#3] (0, 0) to [in=120,out=0] (1, -1);

\fill[white,path fading=fade out disk] (0,0) circle (1.45);
\end{tikzpicture}
}

\newcommand{\splitsource}[4]{
\begin{tikzpicture}[scale=1.5,thick,baseline={([yshift=-.8ex]current bounding box.center)}]

\draw (0, 0) to node {\outline{$e$}} (0.5, 0);
\draw [#1] (0.5, 0) to [out=0,in=240] node [pos=0.6] {\outline{$a$}} (1, 0.5);
\draw [#2] (-0.5, 0.5) to [out=-60,in=180] node [pos=0.4] {\outline{$b$}} (0, 0);
\draw [#3] (-0.5, -0.5) to [out=60,in=180] node [pos=0.4]{\outline{$c$}} (0, 0);
\draw [#4] (0.5, 0) to [out=0,in=120] node [pos=0.6] {\outline{$d$}} (1, -0.5);

\end{tikzpicture}
}

\newcommand{\splittarget}[8]{
\begin{tikzpicture}[scale=1.5,thick,baseline={([yshift=-.8ex]current bounding box.center)}]

\draw [#1] (-0.5, 0.5) to [out=-60,in=180] (0, 0.2);
\draw [#2] (0, 0.2) to (0.5, 0.2);
\draw [#3] (0.5, 0.2) to [out=0,in=240] (1, 0.5);
\draw [#4] (-0.5, -0.5) to [out=60,in=180] (0, -0.2);
\draw [#5] (0, -0.2) to (0.5, -0.2);
\draw [#6] (0.5, -0.2) to [out=0,in=120] (1, -0.5);

\draw [#7] (0, -0.2) to [out=0,in=180] (0.5, 0.2);
\draw [#8] (0, 0.2) to [out=0,in=180] (0.5, -0.2);

\ifthenelse{\equal{#7}{red}}{\ifthenelse{\equal{#8}{blue}}{\node [dot] at (0.25,0) {};}}{}

\ifthenelse{\equal{#7}{blue}}{\ifthenelse{\equal{#8}{red}}{\node [dot] at (0.25,0) {};}}



\end{tikzpicture}
}

\newcommand{\tikzdisk}[2]{
\begin{tikzpicture}[scale=0.85,very thick]
\draw [transparent] (0,0) circle (1.5);  
\tikzdiskcore{#1}{#2}
\end{tikzpicture}
}

\newcommand{\tikzdisksplit}[6]{
\begin{tikzpicture}[scale=1,very thick]
\tikzdiskcore{#1}{#2}

\pgfmathsetmacro{\sa}{(360 * (#3-1) / \n) - 180 + (90 * #5)}
\pgfmathsetmacro{\ea}{(360 * (#4-1) / \n) - 180 + (90 * #6)}
\draw [red, looseness=1] (#3) to [out=\sa,in=\ea] (#4);

\ifthenelse{#5=0}{\node [dot] at (#3) {};}{}
\ifthenelse{#6=0}{\node [dot] at (#4) {};}{}
\end{tikzpicture}
}

\newcommand{\tikzdiskcore}[2]{

\setsepchar{,}
\readlist\angles{#1}
\readlist\colours{#2}

\pgfmathsetmacro{\n}{\listlen\angles[]}
\pgfmathsetmacro{\nn}{\n+1}
\pgfmathsetmacro{\nnn}{\n-1}

\tikzset{B/.style={blue}}
\tikzset{R/.style={red}}
\tikzset{G/.style={gray, dotted}}

\foreach \k [
count=\i from 1,
evaluate=\i as \ii using {ifthenelse(\i==\nn,1,\i)},
evaluate=\ii as \a using {\angles[\ii]},
evaluate=\k as \d using {360*\k / \n}
] in {0,1,...,\n} {
    \pgfmathsetmacro{\rr}{ifthenelse(\a==180,1,ifthenelse(\a==90,1.25,1.5)}
    \coordinate (\i) at (\d:\rr);
}

\path [draw=none, fill=none] (1)
\foreach \k 
[
count=\i from 1,
count=\j from 2,
evaluate=\j as \jj using {ifthenelse(\j==\nn,1,\j)},
evaluate=\i as \a using {\angles[\i]}, 
evaluate=\jj as \aa using {\angles[\jj]},
evaluate=\k as \d using {360*\k / \n}
] in {0,1,...,\nnn} {
    .. controls +({ifthenelse(\a==180,\d+90,ifthenelse(\a==90,\d+90+45,\d+90+90)}:0.25) and +({ifthenelse(\aa==180,\d-60,ifthenelse(\aa==90,\d-60-45,\d-60-90)}:0.25) .. (\j)
};

\foreach \k 
[
count=\i from 1, 
count=\j from 2,
evaluate=\j as \jj using {ifthenelse(\j==\nn,1,\j)},
evaluate=\i as \a using {\angles[\i]},
evaluate=\jj as \aa using {\angles[\jj]},
evaluate=\k as \d using {360*\k / \n}
] in {0,1,...,\nnn} {
    \pgfmathsetmacro{\xx}{ifthenelse(\a==180,\d+90,ifthenelse(\a==90,\d+90+45,\d+90+90)}
    \pgfmathsetmacro{\yy}{ifthenelse(\aa==180,\d-60,ifthenelse(\aa==90,\d-60-45,\d-60-90)}
    \ifthenelse{\equal{\colours[\i]}{R}}{\draw [R] (\i) to [out=\xx,in=\yy] (\j);}{}
    \ifthenelse{\equal{\colours[\i]}{G}}{\draw [G] (\i) to [out=\xx,in=\yy] (\j);}{}
    \ifthenelse{\equal{\colours[\i]}{B}}{\draw [B] (\i) to [out=\xx,in=\yy] (\j);}{}
}

\foreach \i
[
evaluate=\i as \a using {\angles[\i]},
] in {1,...,\n} {
    \ifthenelse{\a=90}{\node [dot] at (\i) {};}{}
}
}

\newcommand{\standardannulus}[4]
{
\begin{tikzpicture}[scale=1.5,very thick]

\draw [thin] (-1, 1.1) to [out=90, in=90, looseness=0.5] (1, 1.1);
\draw [thin, fill=none] (-1, 1.1) 
 to [out=270, in=90] (-1, -0.7) to [out=270, in=270, looseness=0.5] (1, -0.7)
 to [out=90, in=270] (1, 1.1) to [out=270, in=270, looseness=0.5] (-1, 1.1);
\draw [thin] (-1, -0.7) to [out=270, in=270, looseness=0.5] (1, -0.7);

\draw [thin] (-1, 1.1) to (-1, -0.7);
\draw [thin] (1, 1.1) to (1, -0.7);
\draw [thin, dotted] (-1, -0.7) to [out=90, in=90, looseness=0.5] (1, -0.7);

\draw [dotted, looseness=0.5] (-1,0.2) to [out=90,in=90] (1,0.2);
\draw (0, 0.8) to (0, 0.55);
\draw (0, -1) to (0, -0.75);

\draw (-1,0.2) to [out=270,in=165] (-0.75, 0);
\draw (1,0.2) to [out=270,in=15] (0.75, 0);

\draw [#1] (-0.75, 0) to [out=-15, in=270] (0,0.55); 
\draw [#1] (0.75, 0) to [out=195, in=90] (0,-0.75); 
\draw [#2] (-0.75, 0) to [out=-15, in=90] (0,-0.75); 
\draw [#2] (0.75, 0) to [out=195, in=270] (0,0.55); 

\draw [#3, looseness=0.75] (-0.75, 0) to [out=-15, in=195] (0.75, 0); 
\draw [#4] (0, -0.75) to (0, 0.55); 

\ifthenelse{\equal{#3}{red}}{\ifthenelse{\equal{#4}{blue}}{\node [dot] at (0,-0.0835) {};}}{}

\ifthenelse{\equal{#3}{blue}}{\ifthenelse{\equal{#4}{red}}{\node [dot] at (0,-0.0835) {};}}

\end{tikzpicture}
}

\author{Mark C. Bell \and Saul Schleimer}

\title{The word problem for the mapping class group \\
in quasi-linear time}

\begin{document}

\maketitle

\begin{abstract}
We give an $O(n \log^3(n))$--time algorithm for the word problem in the mapping class group of a compact surface.
\blfootnote{This work is in the public domain.}
\end{abstract}

\keywords{mapping class group, word problem, train-tracks, curve shortening, half-GCD}

\ccode{20F10, 57K20, 68W40}


\section{Introduction}

Suppose that $S$ is a compact surface, possibly with boundary.
We define $\MCG(S)$, the \emph{mapping class group} of $S$, to be the group of isotopy classes of homeomorphisms of $S$.
Since this group is finitely generated~\cite{Dehn38} one may ask for a solution to its \emph{word problem}: 
an algorithm that, given a word of length $n$ in the generators and their inverses, decides whether the resulting mapping class is trivial. 

Our main result is the following.

\begin{theorem}
\label[theorem]{WordProblem}
There is an algorithm that solves the word problem for $\MCG(S)$ in $O(n \log^3(n))$ time.
Furthermore, the implied constants are bounded by a fixed polynomial in $|\chi(S)|$.
\end{theorem}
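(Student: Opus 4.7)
The plan is to solve the word problem by evaluating the input word $w$ on a fixed ``filling'' multicurve $\Gamma \subset S$, chosen once for each surface so that any mapping class $f$ with $f(\Gamma) = \Gamma$ (as an isotopy class) must be trivial. Then $w$ is trivial in $\MCG(S)$ if and only if the isotopy classes of $w(\Gamma)$ and $\Gamma$ agree. I would encode $\Gamma$ as a weighted train track $(\tau, \mu)$, apply the generators $g_1, \dots, g_n$ in turn to this data by calling the $\update$ procedure, and then compare the resulting weighted train track to the initial one via a canonical form.

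The central difficulty is that the coordinates of $w(\Gamma)$ can in principle have $\Omega(n)$ bits, so any algorithm that manipulates $\mu$ entry-by-entry and splitting-by-splitting costs $\Omega(n^2)$. To avoid this blow-up, the intermediate state is kept in a form where the topological data $\tau$ has size $O(|\chi(S)|)$ but $\mu$ is an arbitrary integer vector. I would design $\update$ so that a single generator application is realised by a sequence of train-track splittings, each of which acts on $\mu$ by a ``simple'' piecewise linear map: essentially subtract-a-multiple-of-one-weight-from-another. The whole sequence is then the train-track analogue of the Euclidean algorithm applied to $\mu$.

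The speedup comes from the half-GCD trick. Instead of carrying out each splitting individually, I would compute a compact representative (a matrix, together with the resulting combinatorial type of $\tau$) for a long block of splittings, using divide-and-conquer together with a fast integer multiplication subroutine. Iterating this recursion, each call to $\update$ costs $O(\log^2 \|\mu\|)$ amortised time, up to a factor polynomial in $|\chi(S)|$. Combined with the a~priori bound $\log \|\mu\| = O(n)$ and the fact that $\update$ is applied $n$ times, the total running time is $O(n \log^3 n)$.

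The main obstacle will be justifying the amortised bound on $\update$ in the train-track setting. Unlike in the classical half-GCD algorithm on integer pairs, the combinatorial type of the train track $\tau$ changes during a splitting sequence, so the divide-and-conquer recursion must simultaneously track these discrete topological changes and the integer arithmetic on $\mu$. Establishing the bound requires a potential-function argument showing that complexity does not accumulate faster than in the Euclidean case, together with a description of $\update$ consistent across recursive levels; a secondary technical point is packaging long runs of a single generator into one fast operation, so that the word as written in the generators can be processed in one pass of total length $n$ rather than $\|\mu\|$.
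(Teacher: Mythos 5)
Your starting point — fix a filling curve system, reduce the word problem to testing $w(\Gamma) = \Gamma$, and represent curves by weighted train tracks — is sound and essentially matches the paper's use of $\Delta = P \cup Q \cup R$ (together with \cref{DeltaSuffices} and a preliminary reduction for the action on $H_1(S)$). The half-GCD idea for accelerating a single splitting sequence is also the right tool, and it is what the paper uses in \cref{exp_shorten,fast_intersection} to compute geometric intersection numbers. What is missing is the top-level algorithmic structure that turns these ingredients into a quasi-linear algorithm, and the gap is not merely technical.

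The sequential plan --- apply $g_1, \dots, g_n$ in turn to $(\tau,\mu)$ --- cannot run in $O(n \log^3 n)$ time, regardless of how each $\update$ call is implemented. After processing a prefix of length $k$, the weights $\mu$ genuinely have $\Theta(k)$ bits in the worst case; simply reading the current state to feed into the next generator already costs $\Omega(k)$, so the sequential loop is $\Omega\bigl(\sum_k k\bigr) = \Omega(n^2)$ before any splitting arithmetic is done. Half-GCD on the splits inside one generator application cannot undo this, because the cost it is saving is internal to that one application, not the unavoidable cost of touching an $\Omega(k)$-bit object $n$ times. Your stated accounting ("each call to $\update$ costs $O(\log^2\|\mu\|)$ amortised") is therefore unattainable as written, and even taken at face value with $\log\|\mu\| = O(n)$ it would give $O(n \cdot n^2) = O(n^3)$ rather than $O(n\log^3 n)$.

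The paper's fix is a second, independent divide-and-conquer: on the \emph{word}, not on the splitting sequence. It represents a subword $g$ by the bounded-size matrix $\Delta(g)$ of intersection numbers $\iota(\delta, g(\epsilon))$ for $\delta,\epsilon \in \Delta$ (\cref{delta_coordinate}), whose bit-size is $O(\text{length of } g)$. Two half-words $g$ and $h$ are combined without ever materialising $gh$ acting on a curve, because $\Delta(gh)_{\delta,\epsilon} = \iota(g^{-1}(\delta), h(\epsilon))$, and both arguments are readable directly from the rows of $\Delta(g)$ (transposed, via \cref{symmetry_of_delta}) and the columns of $\Delta(h)$. The combination step is then $O(1)$ calls to the fast intersection algorithm, and the recurrence $t(n) \le 2t(n/2) + O(n\log^2 n)$ gives $O(n\log^3 n)$. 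The key insight you are missing is that a single matrix-like object cannot summarise the piecewise-linear action of a long block of splittings, since the branch choices depend on the input weights; but a matrix of intersection numbers with a fixed filling family \emph{does} canonically summarise a mapping class, and it composes via the half-GCD intersection routine you already have in mind.
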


This answers a question of Farb~\cite[Question~3.1]{Farb06}; 
it also gives the best solution to date to Thurston's Problem~20~\cite[Page~380]{Thurston82}.
We prove \cref{WordProblem} using divide-and-conquer, ideas of Dynnikov~\cite{Dynnikov22}, and the following. 

\begin{theorem}
\label[theorem]{GeometricIntersection}
There is an algorithm to compute the geometric intersection number of two curves (given in $\Delta$--coordinates) in $O(n \log^2(n))$ time.
Furthermore, the implied constants are bounded by a fixed polynomial in $|\chi(S)|$.
\end{theorem}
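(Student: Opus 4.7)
The plan is to reduce the intersection-number computation to a Dynnikov-style curve-shortening process, and then apply a half-GCD-style batching strategy to accelerate it. Represent $\alpha$ and $\beta$ by their $\Delta$--coordinate vectors on a fixed ideal triangulation of $S$; the total bit complexity is $n$. A single edge flip updates the $\Delta$--coordinate on the flipped edge via a piecewise-linear formula of the form $x' = \max(a, b) - c$ involving a bounded number of neighbouring entries. Following Dynnikov, one can choose flips, driven by $\beta$, under which the total weight $|\beta|$ drops by a constant factor per flip. Hence $O(\log |\beta|) = O(n)$ flips suffice to bring $\beta$ into a normal form in which $i(\alpha, \beta)$ is a short linear combination of entries of the transported $\alpha$ that can be read off directly.

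Executed naively, this takes $\Theta(n^2)$ time, since each of $O(n)$ flips manipulates $\Theta(n)$-bit integers. To obtain $O(n \log^2(n))$, I would batch flips using half-GCD. Along any maximal sub-sequence of flips for which the argmax branch of every update is determined, the composition acts on the $\Delta$--coordinate vector as an integer linear map, expressible by a matrix $M$ of dimension $\poly(|\chi(S)|)$. The plan is to process the flip sequence in a divide-and-conquer tree: truncate $\alpha$ and $\beta$ to their top $n/2$ bits, recursively compute the compound matrix $M_{\text{top}}$ for the flips needed to halve $|\beta|$, apply $M_{\text{top}}$ to the full-precision vectors using fast integer arithmetic (cost $O(n \log(n))$ for a bounded-size matrix acting on $O(1)$ bignums), and recurse on the residual problem. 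The recurrence $T(n) = 2T(n/2) + O(n \log(n))$ then yields $T(n) = O(n \log^2(n))$, using the \textsc{Mov} and \textsc{UMov} primitives suggested by the paper's notation to encapsulate a batched move and its update.

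The main obstacle will be certifying, from only the high-order bits, which flips are actually taken and that the argmax branches remain stable throughout each batched segment. This is the direct analogue of the ``lookahead'' in the half-GCD algorithm for integers, where a truncated pair can fail to predict the true next quotient. I would handle it by carrying an interval estimate---top bits together with an error bound $\uncertainty{\cdot}$---through the candidate flip sequence, aborting the batch and flushing to full precision whenever an argmax fails to be resolved by the interval. A careful amortisation is then needed to show that the total number of such flushes is $O(\log(n))$ over the whole recursion, which is what preserves the $O(n \log^2(n))$ bound rather than degrading to $O(n \log^3(n))$. A secondary task is the bookkeeping for the polynomial dependence on $|\chi(S)|$: the coordinate-vector length, the number of local update cases per flip, and the constant in the per-flip weight decrease are each $\poly(|\chi(S)|)$, and these combine multiplicatively but not exponentially through the recursion, giving the claimed implied constant.
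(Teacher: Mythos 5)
Your high-level strategy---interval-truncated divide-and-conquer in the spirit of half-GCD, with a composed linear update matrix applied to the full-precision vectors at each level---matches the paper's, which runs \textsc{ExpShorten} on interval-weighted train tracks and then \textsc{FastIntersection} around it. But there is a concrete gap, and a secondary issue.

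The gap is the claim that ``one can choose flips, driven by $\beta$, under which the total weight $|\beta|$ drops by a constant factor per flip, hence $O(n)$ flips suffice.'' This is false. In Dynnikov/normal coordinates, a single flip is a max-plus update and can drop the weight only by a bounded additive amount. The worst case is a curve obtained by a high power of a Dehn twist: untwisting it by individual flips costs $\Theta(k)$ flips for a $k$-fold twist, i.e.\ exponentially many in the bit size. Without a Euclidean-division-style step that collapses such a subsequence into one operation, the number of moves is $\Omega(2^{n})$, and the recursion never enters. This is precisely why the paper works with train tracks and introduces the \emph{untwisting} move (Section~5.2): the rotation number $r$ is computed by an integer division, and a whole block of $\approx r$ elementary split steps is replaced by a single linear update whose bit-size is $\Theta(\log r)$. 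That move is the analogue of the quotient step in GCD, and any flip-based variant of your plan needs an equivalent; it cannot be recovered merely by batching argmax-stable flips into a matrix, because that matrix would be $M^r$ with $r$ exponential in $n$, and you do not know $r$ in advance without performing the division.

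Secondarily, your treatment of the failure case (``aborting the batch and flushing to full precision whenever an argmax fails to be resolved by the interval'') relies on an amortisation argument (``the total number of such flushes is $O(\log n)$'') that you do not supply and that would be delicate. The paper sidesteps this entirely: in \cref{exp_shorten_correctness} it is shown that when the coarse move is \emph{undefined}, the unresolved comparison forces the true weights to agree to $\omega(T)/2$ leading bits, which in turn forces the \emph{true} move (a split at a near-equality, or an untwist by a large rotation number) to remove at least $\omega(T)/2$ bits of shared size. So an unresolvable interval is itself a certificate of progress, and no separate amortisation over flushes is needed. You would do well to aim for an argument of this ``failure implies progress'' form rather than counting flushes.

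Finally, a minor point: the paper's $\Delta$--coordinates come from a pants decomposition together with dual and double-dual curves, not from an ideal triangulation, and its primitive operations are train-track splits, untwists, and separations rather than edge flips. This is mostly a change of combinatorial model, but it matters because the train-track setting is what makes the untwist move natural, and it is also what lets the paper keep the number of crossing points and update-matrix dimensions bounded by a polynomial in $|\chi(S)|$ throughout.
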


Our proof of \cref{GeometricIntersection} is inspired by the \emph{half-GCD algorithm}~\cite{Moller08};  
this uses divide-and-conquer to accelerate the euclidean algorithm.
As a special case of \cref{GeometricIntersection}, we also obtain  the best solution to date for a question (posed as a ``remarque'') of~\cite[Expos\'{e}~4, page~66]{FLP12}.

\begin{theorem}
\label[theorem]{CurveShortening}
There is an algorithm for \emph{curve shortening} in $S$ which runs in $O(n \log^2(n))$ time.
Furthermore, the implied constants are bounded by a fixed polynomial in $|\chi(S)|$. \qed
\end{theorem}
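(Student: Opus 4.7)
The plan is to deduce \cref{CurveShortening} directly from \cref{GeometricIntersection}. Interpret the input to curve shortening as a closed curve (or multicurve) $\gamma$ on $S$ presented in some possibly inefficient combinatorial form — concretely, as a transverse edge-crossing word with respect to a fixed ideal triangulation $\Delta$ of $S$ — and interpret the output as the $\Delta$-coordinate vector of a tight representative of $\gamma$ in its isotopy class. This is exactly the ``remarque'' of \cite{FLP12}: produce the shortest representative relative to $\Delta$ in near-linear time.

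The key observation is that the $i$-th $\Delta$-coordinate of $\gamma$ equals $\intersection(\gamma, e_i)$, where $e_i$ is the $i$-th edge of $\Delta$ (together with local combinatorial data specifying how $\gamma$ traverses each triangle, which is recoverable in linear time). Each edge $e_i$ is a simple arc whose own $\Delta$-coordinate description has size $O(1)$. Thus for each $i$ we invoke \cref{GeometricIntersection} on the pair $(\gamma, e_i)$ to compute $\intersection(\gamma, e_i)$ in $O(n \log^2 n)$ time, and iterating over the $O(|\chi(S)|)$ edges of $\Delta$ assembles the entire $\Delta$-coordinate vector, i.e.\ the shortened form of $\gamma$.

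The main obstacle is that \cref{GeometricIntersection} is stated with both inputs already presented in $\Delta$-coordinates, whereas the whole point of curve shortening is that the input $\gamma$ has \emph{not} yet been put in that form. One resolves this in two steps: first, a single $O(n)$-time preprocessing pass reads the transverse edge-crossing word and converts it into a (possibly non-tight) coordinate/train-track description; second, one observes that the half-GCD style divide-and-conquer driving \cref{GeometricIntersection} is insensitive to tightness of the input — its correctness depends only on the intersection-number semantics, not on minimality of the presentation. Once this is verified, the total running time is bounded by $O(|\chi(S)|^{O(1)} \cdot n \log^2 n)$, and the polynomial dependence on $|\chi(S)|$ is inherited directly from \cref{GeometricIntersection}, yielding \cref{CurveShortening}.
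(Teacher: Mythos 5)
Your proposal takes a genuinely different route from the paper, but it has a real gap at exactly the point you flag.

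The paper's proof is a one-liner: run \cref{fast_intersection} on the ``diagonal'' pair $(\sigma, \mu, \sigma, \mu)$ while recording the sequence of update rules applied; because the moves that reduce the shared size correspond to shortening moves on the underlying weighted train track, this record is itself the output of curve shortening, and the running time is inherited from \cref{fast_intersection_correctness}. Your proposal instead tries to produce $\Delta(\gamma)$ by computing $\iota(\gamma, \delta)$ for each $\delta \in \Delta$ via \cref{GeometricIntersection}.

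The gap is the circularity you already notice and then try to argue away. \Cref{GeometricIntersection} is stated for inputs in $\Delta$--coordinates precisely because that is what lets \cref{ConvertDeltaTrain} produce a \emph{tight pair} (\cref{tight}) of weighted train tracks carried by one of the catalogued standard pairs (\cref{Fig:standard_annulus,Fig:standard_pants}, verified tight in \cref{standard_tight}). Tightness in this paper is not a property of the individual curve $\gamma$ (your proposal uses ``tight'' in the Thurston/FLP sense of a minimal representative); it is a property of the \emph{pair} $(\sigma,\tau)$, and it is an invariant that the moves both require and preserve (\cref{remains_tight}). The correctness statement \cref{fast_intersection_correctness} only covers clean (hence tight) pairs, so the claim that ``the half-GCD style divide-and-conquer is insensitive to tightness of the input'' is unsupported and, as stated, false in the paper's sense of tight. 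To make your route work you would have to show, as a new lemma, that from a transverse edge-crossing word one can in $O(n)$ time produce a \emph{tight} pair $(\sigma,\mu,\tau_i,\nu_i)$ with $C(\mu)=\gamma$ and $C(\nu_i)=\delta_i$ for each $\delta_i \in \Delta$; no such construction is given here, and it is not an easy corollary of \cref{ConvertDeltaTrain} because that proposition presupposes the very coordinates you are trying to compute. Absent that lemma, the paper's diagonal trick is the cleaner path: it sidesteps the coordinate issue entirely and gets the result directly from \cref{fast_intersection_correctness}.
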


The final divide-and-conquer, giving the last factor of $\log(n)$, comes from the recent improvement in large integer multiplication~\cite[Theorem~1.1]{HarveyHoeven21}. 
Note, however, that our paper uses integer multiplication as a ``black box''. 
Swapping in a different algorithm for integer multiplication, say running in time $M(n)$, changes the running time in \cref{WordProblem} to $O(M(n) \log^2(n))$ and in Theorems 
\ref{GeometricIntersection} and \ref{CurveShortening} to $O(M(n) \log(n))$.

\subsection{Reduction to a special case}

The general case of \cref{WordProblem} reduces, in linear time, to the following special case:
\begin{itemize}
\item
The surface $S$ is connected and oriented,
\item
the surface $S$ has $\chi(S) \leq -2$,
\item
the mapping class $f$ preserves orientation, and
\item
the mapping class $f$ preserves the boundary of $S$, componentwise.
\end{itemize}

\noindent
We henceforth assume that $S = S_{g, b}$: the surface of genus $g$ with $b$ boundary components.
We use the notation $\Mod(S) \defeq \MCG^+_\bdy(S)$ for the group of such mapping classes.
It is a theorem from Dehn's Breslau lectures~\cite{Dehn87} (see also \cite{Dehn38, Lickorish64, Humphries79}) that $\Mod(S)$ is finitely generated by Dehn twists. 
Fix $F$, a finite generating set of $\Mod(S)$.

\subsection{Corollaries}

\begin{remark}
There is another common variant of the mapping class group, $\MCG^+(S, \bdy S)$, where homeomorphisms and isotopies fix $\bdy S$ pointwise.  
The word problem in this case is reduced to the previous one by attaching pairs of pants to all boundary components of $S$.
\end{remark}

\begin{remark}
Thus for any fixed value of $r$ we obtain an $O(n \log^3(n))$ algorithm to solve the word problem in $B_r$:
the \emph{braid group} on $r$ strands.  
This answers Problem~B13 on the list maintained by Shpilrain~\cite{Shpilrain24}.
\end{remark}

\begin{remark} 
The order of torsion in $\MCG(S)$ is bounded, solely in terms of the topology of $S$; 
this follows from the classification of hyperbolic orbifolds and a result of Nielsen~\cite[page~24, Staz]{Nielsen43}.
Thus we have an $O(n \log^3(n))$ algorithm to decide whether a given mapping class has finite order.
\end{remark}

\subsection{History and other work}

In a pair of papers~\cite{Dehn11, Dehn38}, and in his Breslau lectures~\cite{Dehn87}, Dehn 
\begin{itemize}
\item 
sets out the word problem for finitely generated groups, 
\item 
defines the mapping class group $\MCG(S)$,
\item
proves that the mapping class group is finitely generated\footnote{Dehn assumes that $S$ is oriented. 
For non-orientable $S$, finite generation is due to Chillingworth~\cite{Chillingworth69}.}, and 
\item 
gives two solutions to its word problem.
\end{itemize}
Dehn's first solution relies on the action of the mapping class group on $\pi_1(S)$.
Since Dehn's work predates the invention of computational complexity by many decades~\cite{FortnowHomer03}, he gives no time estimates.
However, a \naive{} analysis shows that his first algorithm is exponential time.
It is accelerated to polynomial time by~\cite[Theorem~A.8]{Schleimer08}.

Dehn's second solution relies on what he calls the \emph{arithmetic field} $\calC(S)$:
(essentially) the set of isotopy classes of simple closed multi-curves in $S$.  
Dehn shows that a choice of \emph{pants decomposition} for $S$ equips $\calC(S)$ with \emph{intersection/twist} coordinates.
Furthermore, the natural action of $\MCG(S)$ on $\calC(S)$ is piecewise linear in these coordinates.
A \naive{} analysis shows that his second algorithm is quadratic time.

Parts of Dehn's work were recovered, and parts greatly extended,
by Lickorish (1960s) and Thurston (1970s). 
For a discussion, see Stillwell's translator's notes in~\cite{Dehn87}.
Other quadratic-time algorithms for the word problem, since Dehn's, include
\cite{Mosher99, Takarajima00a, Takarajima00b, Hamidi-Tehrani00, Thurston08}.
Most recently Dynnikov~\cite{Dynnikov22} has given a quadratic-time algorithm using \emph{curve shortening}.  
His paper is one of the inspirations for our work; 
we refer to the appropriate sections of~\cite{Dynnikov22} as they arise. 
We refer to Erickson--Nayyeri~\cite{EricksonNayyeri13} for a history of curve shortening, an extensive bibliography, and state-of-the-art algorithms in the RAM model.




Programs that solve the word problem (often as just a small part of their functionality) include 
\cite[Appendix~C, Twist]{Penner82}, 
\cite[BH]{BH}, 
\cite[XTrain]{XTrain}, 
\cite[Trains]{Trains}, 
\cite[Dynn]{Dynn}, 
\cite[Flipper]{flipper}, and 
\cite[Curver]{curver}. 
Other relevant programs include
\cite[Branched]{branched} and 
\cite[Teruaki]{teruaki}.

\subsection*{Acknowledgements}

We thank Rich Schwartz, Sam Taylor, and Richard Webb for enlightening conversations. 
We thank Greg Kuperberg and Filippo Baroni for insightful comments on an earlier draft.

The first author thanks Gordon, Rachael, Caroline, and Katie for their steadfast personal support throughout the course of this work.

\section{Background}

We use multi-tape Turing machines~\cite[page 161]{HopcroftUllman79} for our model of computation.
We define the \emph{complexity} of:
\begin{itemize}
\item an integer $n$ to be $\complexity{n} \defeq \lceil \log(|n| + 1) \rceil$,
\item a vector $v = (v_i)$ to be $\complexity{v} \defeq \sum_i \complexity{v_i}$, and
\item a matrix $M = (m_{i,j})$ to be $\complexity{M} \defeq \sum_{i,j} \complexity{m_{i, j}}$.
\end{itemize}
These measure the number of bits needed to write down each object.
Then if $a$ and $b$ are integers such that $n \defeq \max(\complexity{a}, \complexity{b})$ we have that:
\begin{itemize}
\item $a \pm b$ can be computed in $O(n)$ time and
\item $a \cdot b$, $a \fdiv b \defeq \lfloor a / b \rfloor$ and $a \cdiv b \defeq \lceil a / b \rceil$ can all be computed in $O(n \log(n))$ time \cite[Theorem~1.1]{HarveyHoeven21}.
\end{itemize}
All of our algorithms are recursive with only a constant number of variables (bounded by a fixed polynomial in $|\chi(S)|$) in each frame.
Therefore at each stage all variables that we need to read or write can be reached within $O(n)$ time.
Since this overhead is smaller than all operations that we will perform, we do not track accessing variables.

\subsection{Matrices and further reduction}

Suppose that $d$ is a non-negative integer. 
We treat $\GL(d, \ZZ)$ as both a motivating example and as a useful tool. 
Fast integer multiplication gives fast matrix multiplication, as follows.

\begin{corollary}
\label[corollary]{two_matrix_multiply}
Suppose that $A$ and $B$ are matrices in $\GL(d, \ZZ)$ with $\complexity{A}, \complexity{B} \leq n$.  
Then we can compute the product $A \cdot B$ in $O(n \log(n))$ time.
Furthermore, the implied constants are bounded by a fixed polynomial in $d$. \qed
\end{corollary}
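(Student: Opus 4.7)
The plan is to compute $C = A \cdot B$ via the schoolbook formula $c_{ij} = \sum_{k=1}^d a_{ik} b_{kj}$. Although this nominally performs $d^3$ integer operations, the total bit-complexity is controlled because the hypothesis $\complexity{A}, \complexity{B} \leq n$ distributes bits across the entries rather than per entry. Write $\alpha_{ik} \defeq \complexity{a_{ik}}$ and $\beta_{kj} \defeq \complexity{b_{kj}}$, so that $\sum_{i,k} \alpha_{ik} \leq n$ and $\sum_{k,j} \beta_{kj} \leq n$.

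First I would bound the cost of all $d^3$ pairwise multiplications. By \cite[Theorem~1.1]{HarveyHoeven21}, each product $a_{ik} \cdot b_{kj}$ runs in time $O((\alpha_{ik} + \beta_{kj}) \log n)$. Summing, and using that $\sum_{i,j,k} \alpha_{ik} = d \sum_{i,k} \alpha_{ik} \leq dn$ (and symmetrically for $\beta$), the total multiplication cost is $O(dn \log n)$.

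Next I would bound the additions. Each product $a_{ik} b_{kj}$ has complexity at most $\alpha_{ik} + \beta_{kj}$, and the partial sums that assemble $c_{ij}$ never exceed complexity $\sum_k (\alpha_{ik} + \beta_{kj}) + O(\log d)$. Since each addition is linear in its operands' complexities, the total addition cost across all $(i,j)$ pairs and all $d$ accumulation steps is $O(d^2 n + d^3 \log d)$.

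Adding the two contributions yields a total runtime of $O(dn \log n + d^2 n + d^3 \log d)$, which is $O(n \log n)$ with implied constants polynomial in $d$, as claimed. The main (mild) obstacle is the book-keeping: one must avoid the crude per-entry bound $\alpha_{ik} \leq n$, which would otherwise inflate the multiplication estimate to $O(d^3 n \log n)$. Exploiting the \emph{aggregate} constraint that the $\alpha$'s and $\beta$'s each sum to $n$, rather than that each is individually bounded by $n$, is what delivers the claimed complexity.
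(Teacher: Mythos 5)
Your proof is correct, and since the paper supplies no argument at all for this corollary (it is stated with a \qed{} as an immediate consequence of fast integer multiplication), any careful filling-in of the schoolbook analysis is welcome. However, your stated motivation is slightly off: you claim one ``must avoid'' the crude per-entry bound $\alpha_{ik} \leq n$ because it would give only $O(d^3 n \log n)$; but the corollary only asserts that the implied constants are bounded by \emph{some} fixed polynomial in $d$, and $d^3$ is such a polynomial, so the crude bound already establishes the statement as written. Your aggregate-bit accounting is a genuine improvement (roughly $d$ in place of $d^3$), but it is a refinement, not a necessity. In short: correct proof, and sharper than the claim requires, but the remark about the ``main (mild) obstacle'' misidentifies what is actually needed.
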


From \cite{Agol21}, and \cite[Proposition~2]{OlshanskiiShpilrain24}, we have the following. 

\begin{proposition}
\label[proposition]{many_matrix_multiply}
Suppose that $(E_n, \ldots, E_2, E_1)$ is a list of elementary matrices in $\GL(d, \ZZ)$. 
Then we can compute their product in $O(n \log^2(n))$ time.
Furthermore, the implied constants are bounded by a fixed polynomial in $d$.
\end{proposition}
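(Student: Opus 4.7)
My plan is divide-and-conquer, mirroring the half-GCD template that the paper flags. Split the input list $(E_n, \ldots, E_1)$ at the midpoint, recursively compute $L \defeq E_n \cdots E_{n/2 + 1}$ and $R \defeq E_{n/2} \cdots E_1$, and combine them by a single call to \cref{two_matrix_multiply}.

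The analysis rests on a growth lemma: the product of any $k$ elementary matrices has complexity $O(k)$, with constants depending polynomially on $d$. Indeed, left-multiplying a matrix $M$ by an elementary matrix $I + c\, e_{ij}$ (with $|c|$ bounded) replaces one row of $M$ by itself plus $c$ times another, so the largest entry is magnified by at most a fixed constant; hence bit-length grows by $O(1)$ per factor. In particular, the two partial products $L$ and $R$ each have complexity $O(n/2)$.

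With this bound in hand, \cref{two_matrix_multiply} combines $L$ and $R$ in $O(n \log n)$ time with hidden constants polynomial in $d$. The time recurrence is therefore
\[
T(n) = 2\, T(n/2) + O(n \log n),
\]
which the Master Theorem solves as $T(n) = O(n \log^2 n)$. The polynomial-in-$d$ dependence of the overall constants is inherited from the combine step.

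The main obstacle is the growth lemma. A careless norm-based estimate would allow $\complexity{L \cdot R}$ to be bounded only multiplicatively in $\complexity{L}$ and $\complexity{R}$, doubling the bit-length at each level of the recursion and producing an exponentially-sized output that would wreck the cost of every combine. The key observation is that the restricted row-operation structure of elementary matrices makes bit-length grow only additively in the number of factors, so the partial products at depth $j$ of the recursion genuinely have complexity $O(n/2^j)$ and the arithmetic stays in balance.
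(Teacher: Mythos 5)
Your proof is correct and follows essentially the same divide-and-conquer approach as the paper's, combining the two halves with a single balanced matrix multiplication and invoking the master theorem on $T(n) \leq 2T(n/2) + O(n\log n)$. Your growth lemma (bit-length of the product grows additively, $O(1)$ per elementary factor, so the two half-products have complexity $O(n)$) is exactly what the paper asserts with the terse phrase ``by induction''; you simply make it explicit, which is a welcome clarification.
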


\begin{proof}
We split the list $(E_i)$ in half and recurse.
The two subproducts have complexity at most $O(n)$, by induction.
Thus we can compute the final product in time $O(n \log(n))$ by \cref{two_matrix_multiply}.

Let $t(n)$ bound the time needed to compute the product of $n$ elementary matrices.
We deduce that
\[
t(n) \leq 2 t(n/2) + O(n \log(n))
\]
Thus $t(n) = O(n \log^2(n))$ by the master theorem~\cite[Theorem~4.1]{CLRS22}.
\end{proof}

\begin{corollary}
The word problem for $\GL(d, \ZZ)$, as generated by elementary matrices, can be solved in $O(n \log^2(n))$ time. 
Furthermore, the implied constants are bounded by a fixed polynomial in $d$. 
\qed
\end{corollary}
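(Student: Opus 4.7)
The plan is to apply \cref{many_matrix_multiply} essentially directly, since the word problem in $\GL(d, \ZZ)$ with elementary generators asks exactly for the identity-test of a product of elementary matrices.

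First I would fix a finite generating set $F \subseteq \GL(d, \ZZ)$ consisting of elementary matrices. Each such generator, and each of their inverses (which are again elementary: $e_{ij}(c)^{-1} = e_{ij}(-c)$), has complexity $O(1)$, with constants depending only on $d$ and the choice of $F$. Thus, given an input word $w = g_1 g_2 \cdots g_n$ of length $n$, one can in $O(n)$ time produce the corresponding list of elementary matrices $(E_n, \ldots, E_2, E_1)$, of total complexity $O(n)$.

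Next, \cref{many_matrix_multiply} computes the product $M = E_n \cdots E_2 E_1$ in $O(n \log^2(n))$ time, with implied constants polynomial in $d$. As noted in the induction for that proposition, the resulting matrix $M$ has complexity $O(n)$. Comparing $M$ against the identity $\Id$ entry-by-entry then takes $O(n)$ time, and the word $w$ represents the trivial element of $\GL(d, \ZZ)$ if and only if $M = \Id$.

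Since the matrix multiplication dominates, the total running time is $O(n \log^2(n))$, with constants polynomial in $d$. Honestly there is no real obstacle here: the only thing to be careful about is confirming that inverses of elementary matrices remain elementary of bounded complexity, so that the input word can be viewed as a list of $n$ elementary matrices of the form required by \cref{many_matrix_multiply}.
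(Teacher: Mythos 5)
Your proposal is correct and matches the paper's intended argument exactly: the corollary is stated with an immediate \(\qed\) because it follows directly from \cref{many_matrix_multiply} by computing the product and comparing to the identity, with the observation that inverses of elementary generators are again elementary. Nothing further is needed.
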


Recall that $F$ is our fixed finite generating set for $\Mod(S)$.

\begin{corollary}
Suppose that $f_n, \ldots, f_2, f_1 \in F \cup F^{-1}$ is a list of generators and inverses of generators.
Then the action of $f \defeq f_n \circ \cdots \circ f_2 \circ f_1$ on $H_1(S)$ (given as an integer matrix) can be computed in $O(n \log^2(n))$ time. \qed
\end{corollary}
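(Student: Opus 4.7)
The plan is to reduce to \cref{many_matrix_multiply}. First I would observe that $H_1(S; \ZZ)$ is a finitely generated free abelian group whose rank $d$ is bounded by a fixed polynomial in $|\chi(S)|$ (in fact $d = 2g + \max(b-1, 0)$). Fix a basis once and for all. Then each generator $g \in F \cup F^{-1}$ acts on $H_1(S; \ZZ)$ via a fixed matrix $A_g \in \GL(d, \ZZ)$ whose entries depend only on the topology of $S$ and on $F$, not on $n$. In particular each $\complexity{A_g}$ is bounded by a constant depending on $|\chi(S)|$.

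Next, since $F$ is finite, I can precompute once (as part of the fixed data of the algorithm) a factorisation of each $A_g$ as a product of a bounded number of elementary matrices. This is possible because every matrix in $\GL(d, \ZZ)$ can be written as such a product -- for instance, a Dehn twist $T_\gamma$ acts as the symplectic transvection $v \mapsto v + \pair{v, [\gamma]}[\gamma]$, which factors into a constant number of elementary matrices depending only on $d$.

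Given the input list $(f_n, \ldots, f_1)$, I would then expand each $f_i$ into its fixed factorisation into elementary matrices, producing, in $O(n)$ time, a list $(E_N, \ldots, E_1)$ of $N = O(n)$ elementary matrices in $\GL(d, \ZZ)$, each of constant complexity. The total complexity of this list is therefore $O(n)$. The action of $f$ on $H_1(S)$ is exactly $\prod_i E_i$, so I invoke \cref{many_matrix_multiply} to compute this product in $O(n \log^2(n))$ time, with implied constants polynomial in $d$, and hence polynomial in $|\chi(S)|$.

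There is no real obstacle here; the only thing to verify carefully is the preliminary claim that each generator's action on $H_1(S)$ has a fixed, finite factorisation into elementary matrices, which is routine given that $F$ is finite and each $A_g$ has constant-sized integer entries. Everything else is immediate from \cref{many_matrix_multiply}.
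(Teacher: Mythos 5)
Your proof is correct and matches the paper's implicit argument (the corollary appears with a bare \qed): since $F$ is fixed and finite, each generator acts on $H_1(S)$ by a fixed matrix in $\GL(d,\ZZ)$ admitting a precomputed factorisation into a bounded number of elementary matrices, so the word expands in $O(n)$ time to a list of $O(n)$ elementary matrices and \cref{many_matrix_multiply} gives the $O(n\log^2(n))$ bound. The only added flourish is your explicit transvection formula for Dehn twists, which is not needed but does confirm the factorisation-length claim.
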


Using this, from now on we may assume that certain mapping classes act trivially on $H_1(S)$.

\section{Curves, pants decompositions, and coordinates}

Let $\calC(S)$ denote the set of isotopy classes of essential non-peripheral simple closed multi-curves in $S$.  
As usual and when needed, we will blur the distinction between a multi-curve and its isotopy class.
For $\alpha$ and $\beta$ in $C(S)$ we use $\iota(\alpha, \beta)$ to denote their \emph{geometric intersection number}: the minimal intersection number among representatives of $\alpha$ and $\beta$.
We use $\calC_0(S) \subset \calC(S)$ for the subset of curves (multi-curves with one component). 

\subsection{Cuffs, dual curves, and double duals}

In our construction of pants decompositions, dual curves, and double duals we closely follow~\cite[Expos\'{e}~4, Section~III]{FLP12}.

Fix a \emph{pants decomposition} $P \subset \calC_0(S)$ of $S$: 
a maximal collection of disjoint non-parallel curves.
Note that $|P| = 3g - 3 + b$ where $g$ is the genus of $S$ and $b$ is its number of boundary components.
The curves $\alpha \in P$ are the \emph{cuffs} of the pants decomposition.
Each component of $S - P$ is a \emph{pair of pants}: 
its completion (in the induced path metric) is a copy of $S_{0, 3}$.
Note that $|S - P| = 2g - 2 + b$.

For each cuff $\alpha \in P$ define $X_\alpha$ to be the non-pants component of $S - (P - \alpha)$.  
Thus $X_\alpha$ is either a once-holed torus or a four-holed sphere.
Following~\cite[page~62]{FLP12}, we arrange matters so that all of the $X_\alpha$ are four-holed spheres.

A curve $\beta \in \calC_0(S)$ is \emph{dual to $\alpha$ relative to $P$} if $\beta$ lies in $X_\alpha$ and satisfies $\iota(\alpha, \beta) = 2$. 
We fix $Q \subset \calC_0(S)$ to be a collection of dual curves relative to $P$, one per cuff.

Lastly, for each cuff $\alpha \in P$ and its dual $\beta \in Q$, their \emph{double dual} is $\gamma \defeq T_\alpha(\beta)$; the image of $\beta$ under the right Dehn twist about $\alpha$.
We fix $R \subset \calC_0(S)$ to be the set of \emph{double dual} curves.

For an example of this, see \cref{cuff_dual_double}.

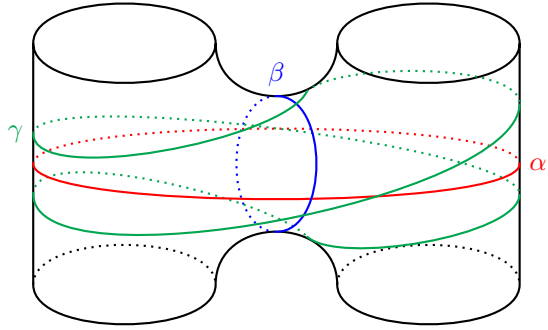
\begin{figure}[ht]
\centering
\begin{tikzpicture}[scale=0.4, thick]
    \coordinate (NWL) at (-20, 13);
    \coordinate (NWR) at (-14, 13);
    \coordinate (NEL) at (-10, 13);
    \coordinate (NER) at (-4, 13);
    \coordinate (SWL) at (-20, 5);
    \coordinate (SWR) at (-14, 5);
    \coordinate (SEL) at (-10, 5);
    \coordinate (SER) at (-4, 5);
    \coordinate (N) at (-12, 11.25);
    \coordinate (S) at (-12, 6.75);
    \coordinate (W) at (-20, 9);
    \coordinate (E) at (-4, 9);
    
    \draw (NWL) to [out=90, in=90, looseness=0.75] (NWR);
    \draw (NEL) to [out=90, in=90, looseness=0.75] (NER);
    
    \draw (NWL) 
        to [out=270, in=270, looseness=0.75] (NWR)
        to [out=270, in=270, looseness=1.50] (NEL)
        to [out=270, in=270, looseness=0.75] (NER)
        to (SER)
        to [out=270, in=270, looseness=0.75] (SEL)
        to [out=90, in=90, looseness=1.50] (SWR)
        to [out=270, in=270, looseness=0.75] (SWL)
        to (NWL);

    \draw [dotted, in=90, out=90, looseness=0.75] (SEL) to (SER);
    \draw [dotted, in=90, out=90, looseness=0.75] (SWL) to (SWR);
    
    \draw [red, dotted, bend left=90, looseness=0.25] (W) to (E);
    \draw [blue, dotted, bend right=90] (N) to (S);
    \draw [Green, dotted] (-20, 10) [out=90, in=90, looseness=0.25] to (-4, 8);
    \draw [Green, dotted] (-11, 11.4) [out=45, in=90, looseness=0.5] to (-4, 11);
    \draw [Green, dotted] (-20, 8) [out=90, in=135, looseness=0.5] to (-11, 6.6);

    \draw [red, bend right=90, looseness=0.25] (W) to (E) node [right] {$\alpha$};
    \draw [blue, bend left=90] (N) node [above] {$\beta$} to (S);
    \draw [Green] (-11, 11.4) [out=245, in=270, looseness=0.5] to (-20, 10) node [left] {$\gamma$};
    \draw [Green] (-4, 11) [out=270, in=270, looseness=0.5] to (-20, 8);
    \draw [Green] (-11, 6.6) [out=-45, in=270, looseness=0.5] to (-4, 8);

\end{tikzpicture}
\caption{The cuff $\alpha$, its dual $\beta$, and their double dual $\gamma$.}
\label[figure]{cuff_dual_double}
\end{figure}

From now on, we fix $\Delta = P \cup Q \cup R$.
So $\Delta$ is a collection of $|\Delta| = 3(3g - 3 + b)$ curves.

\begin{lemma}[{\cite[Proposition~5.3]{Dynnikov22}, \cite[Proposition~2.8]{FarbMargalit12}}]
\label[lemma]{DeltaSuffices}
Suppose that $f$ is a mapping class acting trivially on $H_1(S)$. 
Then $f$ is the identity if and only if $f(\delta) = \delta$ for each $\delta \in \Delta$. \qed
\end{lemma}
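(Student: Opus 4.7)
The forward direction is immediate. For the converse, assume that $f$ acts trivially on $H_1(S)$ and that $f(\delta) = \delta$ for every $\delta \in \Delta$. The plan is to combine the Alexander method with the standard fact that the Torelli group is torsion-free.

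First I would verify that $\Delta = P \cup Q \cup R$ fills $S$. Inside each four-holed sphere $X_\alpha$ the three curves $\alpha$, $\beta$, and $\gamma = T_\alpha(\beta)$ fill $X_\alpha$: this is a direct check in $S_{0,4}$, where any two curves meeting the cuff $\alpha$ twice and related by $T_\alpha$ cut $X_\alpha$ into disks and boundary-parallel annuli. Since $\{X_\alpha : \alpha \in P\}$ covers $S$, the collection $\Delta$ fills $S$. After a small isotopy the curves of $\Delta$ are pairwise in minimal position and no three meet at a common point, so the hypotheses of the Alexander method (\cite[Proposition~2.8]{FarbMargalit12}) are satisfied; compare also \cite[Proposition~5.3]{Dynnikov22} where the same configuration $P \cup Q \cup R$ is used for an almost identical purpose.

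Next I would invoke the Alexander method to conclude that the pointwise stabilizer of $\Delta$ in $\Mod(S)$ is finite. Hence $f$ has finite order. To finish, I would use the classical fact that the Torelli group is torsion-free: any finite-order element of $\Mod(S)$ is realized as a finite-order homeomorphism by Nielsen realization, and a non-trivial finite-order orientation-preserving homeomorphism of a surface acts non-trivially on $H_1(S; \ZZ)$ (for instance by the Lefschetz fixed-point formula, or alternatively by Serre's lemma applied at level $3$). Since by hypothesis $f$ lies in the kernel of the action on $H_1(S)$ and has finite order, we conclude $f = \Id$.

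The main obstacle is the first step: checking that $\Delta$ really does fill and satisfy the technical conditions for the Alexander method. The remaining steps (invoking \cite[Proposition~2.8]{FarbMargalit12} and the torsion-freeness of the Torelli group) are entirely standard, which is why the lemma is stated with a direct citation to the existing literature.
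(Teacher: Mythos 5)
Your overall strategy---check that $\Delta$ fills, apply the Alexander method to get finite order, then quote torsion-freeness of the Torelli group---is the natural one and is in the spirit of the references the paper cites. However, there is a genuine gap in the middle step: your claim that ``the hypotheses of the Alexander method (\cite[Proposition~2.8]{FarbMargalit12}) are satisfied'' is not correct. Proposition~2.8 of Farb--Margalit requires, in addition to pairwise minimal position and pairwise non-isotopy, the condition that for every three of the curves $\gamma_i, \gamma_j, \gamma_k$ at least one of the pairwise intersections is empty. This is \emph{not} the same as ``no three meet at a common point,'' which is the (weaker) condition you actually verify. The collection $\Delta = P \cup Q \cup R$ violates the triple-disjointness hypothesis: for every cuff $\alpha \in P$ with dual $\beta \in Q$ and double dual $\gamma = T_\alpha(\beta) \in R$ we have $\iota(\alpha,\beta) = 2$, $\iota(\alpha,\gamma) = 2$, and $\iota(\beta,\gamma) = 4$, so $\{\alpha,\beta,\gamma\}$ is a pairwise-intersecting triple. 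Thus FM Proposition~2.8 cannot be invoked directly on $\Delta$.

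The gap is repairable, but requires more than a citation. One route is to apply the Alexander method in stages: first to the pairwise-disjoint family $P$ (which trivially satisfies all hypotheses), which pins down $f$ up to Dehn twists along the cuffs and a possible hyperelliptic-type symmetry swapping complementary pants; then use the duals in $Q$ and double duals in $R$ to kill the twist ambiguity and reduce the remaining possibilities to a finite-order class. Alternatively, one can appeal directly to Dynnikov's Proposition~5.3, which is tailored to exactly this configuration. Your final step (Torelli group torsion-free via Nielsen realization together with the Lefschetz formula or Serre's level-$3$ lemma) is sound for the surfaces $S$ considered here, with $\chi(S) \le -2$ and boundary preserved componentwise, and your sketch that $\Delta$ fills is also correct. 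But as written, the invocation of the Alexander method is the load-bearing step, and it does not go through under the stated hypotheses.
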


\begin{remark}
We only need \cref{DeltaSuffices} to deal with hyperelliptic elements. 
If we avoid surfaces $S$ admitting such (namely $S_{2, 0}$, $S_{1,2}$, $S_{1, 1}$, $S_{1, 0}$, and $S_{0, 4}$) then a mapping class $f \in \Mod(S)$ is the identity if and only if $f(\delta) = \delta$ for each $\delta \in \Delta$.
\end{remark}

\subsection{Curve coordinates}
\label{Sec:Coordinates}

\begin{definition}
Suppose that $\epsilon \in \calC(S)$ is a multi-curve.
We define the \emph{$\Delta$--coordinates} of $\epsilon$ to be the following vector.
\[
\Delta(\epsilon) = (\iota(\delta, \epsilon))_{\delta \in \Delta} \in \ZZ^\Delta \qedhere
\]
\end{definition}

We also need the \emph{$(m,s,t)$--coordinates} from Expos\'{e}~4 and Expos\'{e}~6 of~\cite{FLP12}.
These require a choice of pants decomposition (and dual and double dual curves) -- we again use $\Delta$.  
Note that~\cite[Expos\'{e}~4, Th\'{e}or\`{e}me~7]{FLP12} states that $(m, s, t)$--coordinates are an injection of $\calC(S)$ into $\ZZ^{3(3g - 3 + b)}$, justifying their name.
This injection is made explicit in the appendix to Expos\'{e}~6~\cite[page~120]{FLP12}.

\begin{proposition}
\label[proposition]{ConvertDeltaMST}
There is a linear-time algorithm to convert from $\Delta$--coordinates to $(m, s, t)$--coordinates. \qed
\end{proposition}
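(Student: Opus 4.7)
The plan is to invoke the explicit formulas tabulated in the appendix to \cite[Expos\'{e}~6]{FLP12}, which give a case-by-case piecewise-linear dictionary between the two coordinate systems, and to execute them locally at each cuff. The $m$--coordinates are immediate: for each cuff $\alpha \in P$ we already have $m_\alpha = \iota(\alpha, \epsilon)$ as an entry of the input vector $\Delta(\epsilon)$. The $s$-- and $t$--coordinates at $\alpha$ will be computed from the constant-sized block of $\Delta$--coordinates supported in (or on the boundary of) the complexity--one subsurface $X_\alpha$; the boundedness of this block is the key structural feature that makes the whole conversion linear rather than quadratic.

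For each cuff I would proceed as follows. The arc-combinatorics of the pairs of pants incident to $\alpha$ determines $s_\alpha$ as a fixed piecewise-linear expression (built from $+$, $-$, $\max$, and halving) in the values $\iota(\alpha', \epsilon)$ ranging over the cuffs $\alpha'$ of $X_\alpha$. To recover $t_\alpha$ I would exploit the fact that $\gamma_\alpha = T_\alpha(\beta_\alpha)$: as $t_\alpha$ varies, the pair $(\iota(\beta_\alpha, \epsilon), \iota(\gamma_\alpha, \epsilon))$ moves in a piecewise-linear, eventually-monotone way, and comparing these two values against $m_\alpha$ and $s_\alpha$ pins down both $|t_\alpha|$ and its sign through a second piecewise-linear formula read off from the same appendix.

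Each of the $|P| = 3g - 3 + b$ cuffs then requires a bounded number of additions, subtractions, comparisons, and parity-checked halvings on integers of complexity at most $n$. By the arithmetic bounds recorded at the top of \cref{Sec:Coordinates}, every such operation costs $O(n)$ time, so the total running time is $O(n)$, with implied constants polynomial in $|\chi(S)|$ through the bound $|P| = 3g - 3 + b$.

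The main obstacle is bookkeeping rather than novelty: one must verify that the recipe of \cite[Expos\'{e}~6, page~120]{FLP12}, which is stated in the opposite direction as the map $(m, s, t) \mapsto \Delta$, inverts to a bounded-depth piecewise-linear function of the $\Delta$--coordinates, with no hidden multiplications or divisions by quantities growing with $n$. Since each case of the cited recipe is a low-dimensional integer-affine system in a bounded number of unknowns, inversion yields exactly such formulas; selecting which case applies to a given $\epsilon$ uses only sign comparisons among $\Delta$--coordinates and therefore also costs $O(n)$ per cuff.
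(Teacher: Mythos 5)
Your proposal fills in what the paper leaves unproved: Proposition 3.2.2 is stated with a bare \qed, the authors evidently regarding the conversion as an immediate consequence of the explicit piecewise-linear dictionary in the appendix to Expos\'{e}~6 of FLP, which is exactly what you invoke. Your decomposition into the local per-cuff computation (read $m_\alpha$ directly from $\Delta(\epsilon)$, then recover $s_\alpha$ and $t_\alpha$ from the bounded block of $\Delta$--entries supported on $X_\alpha$ using $\gamma_\alpha = T_\alpha(\beta_\alpha)$), together with the observation that each cuff demands only a bounded number of $O(n)$-time arithmetic operations, is the right reason the conversion is linear and is consistent with how the paper later invokes the same dictionary in the proof of \cref{ConvertDeltaTrain}.
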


It follows that $\Delta$--coordinates are also injective, justifying their name.

\subsection{Dynnikov's matrices for mapping classes}

Suppose that $f \in \Mod(S)$ is a mapping class. 

\begin{definition}
Following Dynnikov~\cite[Section~6]{Dynnikov22}, we define the \emph{$\Delta$--coordinates} of $f$ to be the following matrix.
\[
\Delta(f) = (\Delta(f(\epsilon)))_{\epsilon \in \Delta}
\]
We adopt the convention that the $(\delta, \epsilon)$--entry of $\Delta(f)$ is $\intersection(\delta, f(\epsilon))$.
\end{definition}

\begin{remark}
\label[remark]{symmetry_of_delta}
Applying the symmetry and invariance of geometric intersection, we have
\[
\intersection(\delta, f(\epsilon)) = \intersection(f(\epsilon), \delta) = \intersection(\epsilon, f^{-1}(\delta)).
\]
Thus $\Delta(f^{-1})$ is the transpose of $\Delta(f)$.
\end{remark}

\begin{example}
Suppose that $\alpha$, $\beta$, and $\gamma$ are the curve, dual curve and double dual on $S_{0, 4}$ shown in \cref{cuff_dual_double}.
Set $\Delta = \{\alpha, \beta, \gamma\}$.
The $\Delta$--coordinates of the identity is
\[
\Delta(\Id) = 
\begin{pmatrix}
0 & 2 & 2 \\
2 & 0 & 4 \\
2 & 4 & 0
\end{pmatrix}
\]
and, for $k > 0$, we have
\[
\Delta(T_\alpha^k) = 
\begin{pmatrix}
0 &  2     & 2 \\
2 & 4k     & 4k + 4 \\
2 & 4k - 4 & 4k
\end{pmatrix}
\inlineand
\Delta(T_\beta^k) = 
\begin{pmatrix}
4k     & 2 & 8k - 2 \\
2      & 0 & 4 \\
8k + 2 & 4 & 16k
\end{pmatrix}
\]
Applying the remark above, the transposes of these give $\Delta(T_\alpha^{-k})$ and $\Delta(T_\beta^{-k})$.
\end{example}

\begin{remark}
\label[remark]{construct_twist_delta_coordinate}
More generally, suppose that $\zeta$ is a short simple closed curve. 
Then there is an algorithm to compute the matrix $\Delta(T_\zeta^k)$; the running time is polynomial in $\log(k)$.
Furthermore, the entries of $\Delta(T_\zeta^k)$ are, eventually, linear functions of $k$.
Thus $\complexity{\Delta(T_\zeta^k)} = O(\log(k))$. 
See~\cite[Section~5]{SSS08} and~\cite[Proposition~22]{Thurston08} for discussions;
see~\cite[kernel/twist.py]{curver} for an implementation.
\end{remark}

\begin{corollary}
Suppose that $f_n, \ldots, f_2, f_1 \in F \cup F^{-1}$ is a list of generators and inverses of generators.
Suppose that $f = f_n \circ \cdots f_2 \circ f_1$ acts trivially on $H_1(S)$. 
Then $f$ is the identity if and only if $\Delta(f) = \Delta(\Id)$. \qed
\end{corollary}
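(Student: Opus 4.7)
The plan is to reduce this corollary to the combination of two facts that are already established in the excerpt: the injectivity of $\Delta$--coordinates on multi-curves, and \cref{DeltaSuffices}, which says that a mapping class acting trivially on $H_1(S)$ is the identity exactly when it fixes each $\delta \in \Delta$ as an isotopy class.

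First I would handle the easy direction: if $f = \Id$, then $f(\epsilon) = \epsilon$ for every $\epsilon \in \Delta$, so the definition of $\Delta(f)$ gives $\Delta(f) = \Delta(\Id)$ immediately.

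For the nontrivial direction, suppose $\Delta(f) = \Delta(\Id)$. Column-by-column this says that for each $\epsilon \in \Delta$ the vector $\Delta(f(\epsilon)) \in \ZZ^\Delta$ equals $\Delta(\epsilon)$. Here I would invoke the remark made just after \cref{ConvertDeltaMST}, namely that $\Delta$--coordinates are injective on $\calC(S)$ (this follows since $(m,s,t)$--coordinates inject $\calC(S)$ into $\ZZ^{3(3g-3+b)}$ by \cite[Expos\'{e}~4, Th\'{e}or\`{e}me~7]{FLP12}, and conversion from $\Delta$-- to $(m,s,t)$--coordinates is a well-defined function). Consequently $f(\epsilon) = \epsilon$ in $\calC(S)$ for every $\epsilon \in \Delta$. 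Now the hypothesis that $f$ acts trivially on $H_1(S)$ lets me apply \cref{DeltaSuffices} to conclude $f = \Id$.

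The only potential obstacle is bookkeeping: one must be careful that $\Delta(f) = \Delta(\Id)$ really does encode equality of $f(\epsilon)$ and $\epsilon$ as isotopy classes (not merely equality of intersection numbers with some proper subset), but this is guaranteed by the convention on the $(\delta,\epsilon)$--entry together with injectivity of $\Delta$ on $\calC(S)$. No new geometry or algorithmic content is needed beyond the two cited results, so the proof is a short syllogism and the corollary may reasonably be given a $\qed$--only proof.
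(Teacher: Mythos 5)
Your proof is correct and matches the argument the paper intends by marking this corollary with a bare $\qed$: the two ingredients you identify — injectivity of $\Delta$--coordinates on $\calC(S)$ (via \cref{ConvertDeltaMST} and the injectivity of $(m,s,t)$--coordinates) and \cref{DeltaSuffices} — are exactly the facts established immediately beforehand for this purpose, and your column-by-column reading of $\Delta(f) = \Delta(\Id)$ is the right way to unpack the matrix equality.
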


So, to solve the word problem in the mapping class group, it suffices to compute the $\Delta$ coordinates of a class $f$ (given as a word over $F \cup F^{-1}$).
We do this via another divide-and-conquer.

\begin{algorithm}[ht!]
\caption{$\DCoord$}
\label[algorithm]{delta_coordinate}
\begin{algorithmic}[1]
\Require{A list $f_n, \ldots, f_2, f_1 \in F \cup F^{-1}$}
\Ensure{The matrix $\Delta(f_n \circ \cdots \circ f_2 \circ f_1)$}
    \IfThen{$n = 0$}{\Return $\Delta(\Id)$} \label{Line:base_0}
    \IfThen{$n = 1$}{\Return $\Delta(f_1)$} \label{Line:base_1}
    \State $k \gets n \fdiv 2$
    \State $M \gets \DCoord(f_n, \ldots, f_{k+1})$ \Comment{$g = f_n \circ \cdots \circ f_{k+1}$.} 
    \State $N \gets \DCoord(f_k, \ldots, f_1)$  \Comment{$h = f_k \circ \cdots \circ f_1$.} 
    \ForAll {$\delta \in \Delta$}
    \State $\alpha_\delta \gets (M_{\delta, \epsilon})_{\epsilon \in \Delta}$ \Comment{$\alpha_\delta = \Delta(g^{-1}(\delta))$.}
    \EndFor
    \ForAll {$\epsilon \in \Delta$}
    \State $\beta_{\epsilon} \gets (N_{\delta, \epsilon})_{\delta \in \Delta}$ \Comment{$\beta_\epsilon = \Delta(h(\epsilon))$.}
    \EndFor
    \State \Return $( \Call{FastIntersection}{\alpha_\delta, \beta_\epsilon} )_{\delta, \epsilon \in \Delta}$
    \Comment{Via \cref{ConvertDeltaTrain}.}
\end{algorithmic}
\end{algorithm}

\begin{theorem}
\label[theorem]{delta_coordinates_correctness}
Suppose that $f_n, \ldots, f_2, f_1 \in F \cup F^{-1}$ is a list of generators and inverses of generators. 
Suppose that $f = f_n \circ \cdots \circ f_2 \circ f_1$. 
Then \Cref{delta_coordinate} returns $\Delta(f)$.
Furthermore, \cref{delta_coordinate} runs in $O(n \log^3(n))$ time.
\end{theorem}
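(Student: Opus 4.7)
The plan is to prove correctness and the running time bound jointly, by induction on $n$. For correctness, the base cases at Lines \ref{Line:base_0} and \ref{Line:base_1} are immediate from the definition of $\Delta(f)$. In the recursive case, set $g \defeq f_n \circ \cdots \circ f_{k+1}$ and $h \defeq f_k \circ \cdots \circ f_1$, so $f = g \circ h$; by the inductive hypothesis $M = \Delta(g)$ and $N = \Delta(h)$. Using \cref{symmetry_of_delta} together with the convention that $M_{\delta, \epsilon} = \intersection(\delta, g(\epsilon))$, we have
\[
M_{\delta, \epsilon} = \intersection(\delta, g(\epsilon)) = \intersection(\epsilon, g^{-1}(\delta)),
\]
so the row $\alpha_\delta = (M_{\delta, \epsilon})_{\epsilon}$ is exactly $\Delta(g^{-1}(\delta))$; the column $\beta_\epsilon = (N_{\delta, \epsilon})_{\delta}$ is directly $\Delta(h(\epsilon))$. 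Isotopy invariance of $\intersection$ then gives
\[
\Delta(f)_{\delta, \epsilon} = \intersection(\delta, f(\epsilon)) = \intersection(g^{-1}(\delta), h(\epsilon)),
\]
which is precisely what the final line returns via FastIntersection, correct by \cref{GeometricIntersection}.

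For the running time, the crucial estimate is $\complexity{\Delta(f_n \circ \cdots \circ f_1)} = O(n)$. Since each $f_i \in F \cup F^{-1}$ is a Dehn twist (or its inverse) about one of finitely many fixed short curves, \cref{construct_twist_delta_coordinate} together with the piecewise-linear description of the twist action on $\Delta$-coordinates implies that a single application of $f_i$ inflates the bit complexity of any $\Delta$-coordinate vector by at most an additive constant (depending only on $|\chi(S)|$). Iterating, $\complexity{M}$, $\complexity{N}$, and hence every $\complexity{\alpha_\delta}$ and $\complexity{\beta_\epsilon}$ are $O(n)$. Because $|\Delta| = 3(3g-3+b)$ is a constant depending only on $|\chi(S)|$, we make $O(1)$ calls to FastIntersection, each taking $O(n \log^2(n))$ time by \cref{GeometricIntersection}, while the row and column extractions contribute only $O(n)$. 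Letting $T(n)$ bound the running time,
\[
T(n) \leq 2 T(n/2) + O(n \log^2(n)),
\]
and the master theorem~\cite[Theorem~4.1]{CLRS22} yields $T(n) = O(n \log^3(n))$.

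The main obstacle is the bit-complexity bound $\complexity{\Delta(g)} = O(n)$ for a word $g$ of length $n$. The entries of $\Delta(g)$ themselves may (and generically do) grow exponentially in $n$, so the linear bit bound is tight; it requires genuine control over the expansion factor of each generator acting on $\Delta$-coordinates rather than just a naive estimate. Once this bound is in hand, the rest -- the reduction of a matrix entry of $\Delta(f)$ to one call of FastIntersection via \cref{symmetry_of_delta}, and the master-theorem calculation -- is routine.
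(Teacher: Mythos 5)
Your proof matches the paper's argument essentially step for step: the same induction establishing $M = \Delta(g)$ and $N = \Delta(h)$, the same use of \cref{symmetry_of_delta} to read off $\alpha_\delta = \Delta(g^{-1}(\delta))$ and $\beta_\epsilon = \Delta(h(\epsilon))$, the same identity $\intersection(\delta, f(\epsilon)) = \intersection(g^{-1}(\delta), h(\epsilon))$, and the same recurrence $t(n) \leq 2t(n/2) + O(n\log^2 n)$ resolved by the master theorem. The only difference is that you pause to justify the $\complexity{\Delta(g)} = O(n)$ estimate (via bounded multiplicative growth of intersection numbers under each fixed generator), which the paper simply asserts; that is a sensible amplification rather than a divergence in approach.
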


\begin{proof}
By induction on $n$, we have that $M = \Delta(g)$ and $N = \Delta(h)$.
Applying the definitions and \Cref{symmetry_of_delta} we have that the $\Delta$--coordinates of $\alpha_\delta$ and $\beta_\epsilon$ appear as rows and columns in these matrices: namely, $\alpha_\delta = \Delta(g^{-1}(\delta))$ and $\beta_\epsilon = \Delta(h(\epsilon))$.
From the definitions and the symmetry of geometric intersection we have that
\[
\Delta(f)_{\delta, \epsilon} = 
\intersection(\delta, f(\epsilon)) = 
\intersection(\delta, gh(\epsilon)) = 
\intersection(g^{-1}(\delta), h(\epsilon)) = 
\intersection(\alpha_\delta, \beta_\epsilon).
\]
Therefore \cref{delta_coordinate} returns $\Delta(f)$.

Finally, suppose that $t_1(n)$ bounds, from above, the running time needed for $\Call{DeltaCoordordinate}{f_n, \ldots, f_2, f_1}$.
Then for each $\delta, \epsilon \in \Delta$ we have that $\complexity{\Delta(\alpha_\delta)}, \complexity{\Delta(\beta_\epsilon)} = O(n)$.
In \cref{fast_intersection_correctness} we show that \cref{fast_intersection} (\textsc{FastIntersection}) computes $\intersection(\alpha_\delta, \beta_\epsilon)$ in $O(n \log^2(n))$ time.
Therefore
\[ t_1(n) \leq 2 t_1(n / 2) + O(n \log^2(n)) \]
and so $t_1(n) = O(n \log^3(n))$ by the master theorem.
\end{proof}

This proves \cref{WordProblem}, subject to the correctness of \cref{fast_intersection}.

\section{Train tracks}
\label[section]{sec:train_tracks}

We review some standard definitions and then discuss \emph{tight pairs} of train tracks, generalising work of Dynnikov~\cite{Dynnikov22}.

\subsection{Index}

Suppose that $Q$ is a \emph{region}: 
a compact riemannian surface with piecewise smooth boundary (perhaps empty).
We assume that at each non-smooth point of $\bdy Q$ the two adjacent arcs of the boundary make an interior angle of either zero or 90 degrees.
We call the former points \emph{(outward) cusps} and the latter \emph{(outward) corners}. 
We define the \emph{index} of $Q$ following~\cite[page~57]{Mosher03}:
\[
\ind(Q)
  \defeq \chi(Q) 
    - \frac{1}{2}(\textrm{number of cusps in $\bdy Q$})
    - \frac{1}{4}(\textrm{number of corners in $\bdy Q$})
\]
The second and third terms give a combinatorial version of the contribution of geodesic curvature in the Gauss--Bonnet formula.
Note that if $Q = R \sqcup R'$, then $\ind(Q) = \ind(R) + \ind(R')$.

With $Q$ a region as above, suppose now that $\alpha$ is a properly embedded simple arc, or closed curve, in $Q$.  
If $\alpha$ is an arc, then we suppose that its endpoints
\begin{itemize}
\item avoid the cusps and corners of $\bdy R$ 
and
\item are tangent or perpendicular to $\bdy R$.  
\end{itemize}
In a small abuse of notation, we define $\closure{Q - \alpha}$ to be the completion (in the induced path metric) of $Q - \alpha$.
We can now state the ``additivity'' property of the index~\cite[pages~57--58]{Mosher03}.

\begin{lemma}
\label[lemma]{Additivity}
With $Q$ and $\alpha$ as above, $\ind(\,\closure{Q - \alpha}\,) = \ind(Q)$. \qed
\end{lemma}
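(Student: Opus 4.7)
The plan is a local bookkeeping argument: compare the three terms in the definition of $\ind$ before and after cutting, and show that their changes cancel. Since $\ind$ is additive over disjoint union, I may assume $Q$ and $\closure{Q - \alpha}$ are connected; since everything is local near $\alpha$, the pre-existing cusps and corners of $\partial Q$ are irrelevant, as $\alpha$ avoids them by hypothesis.

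First I would dispose of the case where $\alpha$ is a simple closed curve in the interior of $Q$. A regular annular neighborhood $N(\alpha)$ has $\chi(N(\alpha)) = 0 = \chi(\partial N(\alpha))$, so inclusion--exclusion yields $\chi(\closure{Q - \alpha}) = \chi(Q)$. No new cusps or corners are introduced since $\alpha$ is disjoint from $\partial Q$, and we are done.

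For the arc case I would use two ingredients. First, cutting $Q$ along a properly embedded arc increases $\chi$ by exactly $1$; I would check this by inclusion--exclusion on a disk neighborhood of $\alpha$ (equivalently, verify on the model where $\alpha$ is a chord in a disk, splitting it into two disks). Second, a local analysis at each endpoint $\zeta$ of $\alpha$, in coordinates where $Q$ is the upper half-plane and $\partial Q$ is the $x$-axis. If $\alpha$ meets $\partial Q$ perpendicularly at $\zeta$, then $\zeta$ is doubled in $\closure{Q - \alpha}$ and each preimage sits at a $\pi/2$ interior angle, contributing two new corners and no new cusp. If $\alpha$ meets $\partial Q$ tangentially (locally $\alpha$ looks like $y = x^2$, $x \geq 0$), then one preimage of $\zeta$ lies at the tip of a thin wedge of interior angle $0$, while the other lies on a boundary that is smooth there (interior angle $\pi$); so $\zeta$ contributes one new cusp and no new corner.

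Letting $a$ and $b$ count the perpendicular and tangent endpoints of $\alpha$, so that $a + b = 2$, the change in index is $\Delta \ind = 1 - \tfrac{1}{2} b - \tfrac{1}{4}(2a) = 1 - \tfrac{a+b}{2} = 0$. The one genuine thing to get right is the tangent case: one must carefully observe that the interior angle on the side away from the wedge is $\pi$ (hence smooth and contributing nothing), not some exotic angle, precisely because $\alpha$ and $\partial Q$ share a tangent line at $\zeta$. Everything else is counting.
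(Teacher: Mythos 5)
Your proof is correct. The paper gives no argument of its own here, deferring entirely to Mosher (pages 57--58); the local bookkeeping you carry out -- $\chi$ unchanged for a closed curve, $\chi$ increasing by one for an arc, with each perpendicular endpoint contributing two corners and each tangent endpoint one cusp, so that the changes cancel -- is exactly the standard argument that reference uses, so this matches the paper's (implicit) approach.
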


\subsection{Train tracks}

A train track $\tau$ in a surface $S$ is essentially an embedded graph with smoothings at its vertices; 
see~\cite[Section~8.9]{Thurston80} or~\cite{PennerHarer92, Mosher03} for further background.
Here is a definition suited to our needs.

\begin{definition}
\label[definition]{TrainTrack}
Suppose that $\tau \subset S$ is a closed subset. 
Suppose that $S(\tau)$ is a finite subset of $\tau$; 
these are the \emph{switches} of $\tau$.
The points of $\tau - S(\tau)$ are the \emph{branch points} of $\tau$.
We say that $\tau$ is a \emph{train track} if it satisfies the following.
\begin{enumerate}
\item 
Every branch point $x$ of $\tau$ has a disc neighbourhood $U_x$ (in $S$) so that $\tau \cap U_x$ is a smooth simple arc properly embedded in $U_x$.
\item 
\label{Itm:Switch}
Every switch $s \in S(\tau)$ has a disc neighbourhood $U_s$ (in $S$) so that $\tau_s = \tau \cap U_s$ is a smooth graph properly embedded in $U_s$.
The graph $\tau_s$ has one vertex of valence three (at $s$), three vertices of valence one (on $\bdy U_s)$, and three edges, called \emph{branch-ends}.
We require that the three components of $U_s - \tau_s$ have indices one-half, one-half, and zero, respectively. 
See \cref{Fig:switch}.
The component of $U_s - \tau_s$ with index zero is called the \emph{cusp} at $s$.
The branch-end not contained in the boundary of the cusp at $s$ is called \emph{large}; 
the two branch-ends contained in the boundary of the cusp are called \emph{small}.
\item 
Suppose that $C$ is a component of $S - \tau$. 
Suppose that $R$ is the region obtained by taking the completion of $C$ (in its induced path metric. 
Then $R$ has negative index.
\end{enumerate}
The components of $\tau - S(\tau)$ are called the \emph{branches} of $\tau$; 
we gather these into a set $B(\tau)$.
A branch is \emph{large} if both of its branch-ends are large;
a branch is \emph{small} if both of its branch-ends are small;
a branch is \emph{mixed} if it has one large and one small branch-end.
\end{definition}

\begin{figure}
\centering
\switch{black}{black}{black}
\caption{A switch}
\label{Fig:switch}
\end{figure}

Note that a train track may have connected components without switches.  
These components are necessarily simple closed curves, no two of which are isotopic to each other (or to a component of $\bdy S$).
Additionally, train tracks in $S$ have at most $\bee \defeq 6(3g - 3 + b)$ branches. 

\subsection{Weightings}

\begin{definition}
Suppose that $\tau$ is a train track.
A \emph{weighting} (on $\tau$) is a function $\mu \from B(\tau) \to \ZZ$. 
We denote the space of weightings on $\tau$ by $W(\tau)$.
We refer to the pair $(\tau, \mu)$ as a \emph{weighted train track}.
\end{definition}

A weighting $\mu$ on $\tau$ gives a multi-curve $C(\mu)$ exactly when:
\begin{itemize}
\item (non-negativity) $\mu(b) \geq 0$ for each $b \in B(\tau)$ and
\item (switch equality) for any switch $s \in S(\tau)$, if $a$, $b$, and $c$ are the branch-ends at $s$, with $a$ large and with $b$ and $c$ small, then we have $\mu(a) = \mu(b) + \mu(c)$.
\end{itemize}
We build $C(\mu)$ by taking $\mu(b)$ arcs parallel to $b$ and gluing ends according to the switch equality.  
We use $V(\tau) \subset W(\tau)$ to denote the cone of such weightings.

\subsection{Pairs of train tracks}
\label{Sub:train_track_pairs}




\begin{definition}
\label[definition]{tight}
Suppose that $\sigma$ and $\tau$ are train tracks.
The pair $(\sigma, \tau)$ is \emph{tight} if it satisfies the following.
\begin{itemize}
\item 
Every point $x$ of $\sigma \cap \tau$ is (exactly) one of the following: 
\begin{itemize}
\item 
a \emph{crossing}: there is a disc neighbourhood $U_x$ of $x$ in $S$ so that $\sigma \cap U_x$ and $\tau \cap U_x$ are simple arcs, properly embedded in $U_x$, meeting exactly once, transversely, at $x$. 
The four components of $U_x - (\sigma \cup \tau)$ are \emph{corners}.
See \cref{Fig:crossing}.
\item
a \emph{tangency}: there is a disc neighbourhood $U_x$ of $x$ in $S$ so that $\sigma \cap U_x = \tau \cap U_x$ is a single arc.
See \cref{Fig:tangency}.
\item 
a \emph{shared switch}: there is a disc neighbourhood $U_x$ of $x$ in $S$ so that $x$ is a switch of $U_x \cap (\sigma \cup \tau)$, as described in \cref{TrainTrack}(\ref{Itm:Switch}), $x \in S(\sigma)$ and $x \in S(\tau)$.
See \cref{Fig:shared_switch}.
\item
a \emph{$\sigma$--switch} (respectively \emph{$\tau$--switch}): there is a disc neighbourhood $U_x$ of $x$ in $S$ so that $x$ is a switch of $U_x \cap (\sigma \cup \tau)$, $x \in S(\sigma)$ and $x \not \in S(\tau)$ (resp. $x \not \in S(\sigma)$ and $x \in S(\tau)$).
See \cref{Fig:sigma-switch}.
\item 
a \emph{divergence}: there is a disc neighbourhood $U_x$ of $x$ in $S$ so that $x$ is a switch of $U_x \cap (\sigma \cup \tau)$, $x \not \in S(\sigma)$ and $x \not \in S(\tau)$.
See \cref{Fig:divergence}.
\end{itemize}
\item 
Suppose that $C$ is a component of $S - (\sigma \cup \tau)$. 
Suppose that $R$ is the completion of $C$ (in its induced path metric). 
Then the region $R$ must be \emph{legal}: that is, either
\begin{itemize}
\item $\ind(R) < 0$, or
\item $R$ matches one of the cases shown in \cref{Fig:high_index_regions}. \qedhere
\end{itemize}
\end{itemize}
\end{definition}

\begin{figure}
\centering
\begin{subfigure}[b]{0.3\textwidth}
    \centering
    \begin{tikzpicture}[scale=1,very thick]

\draw [red] (-1.41, 0) to (1.41, 0);
\draw [blue] (0, -1.41) to (0, 1.41);
\node [dot] at (0,0) {};

\fill[white,path fading=fade out disk] (0,0) circle (1.45);

\end{tikzpicture}
    \caption{A crossing point}
    \label{Fig:crossing}
\end{subfigure}
\hfill
\begin{subfigure}[b]{0.3\textwidth}
    \centering
    \begin{tikzpicture}[scale=1,very thick]

\draw (-1.41, 0) to (1.41, 0);

\fill[white,path fading=fade out disk] (0,0) circle (1.45);

\end{tikzpicture}
    \caption{A tangency point}
    \label{Fig:tangency}
\end{subfigure}
\hfill
\begin{subfigure}[b]{0.3\textwidth}
    \centering
    \switch{black}{black}{black}
    \caption{A shared switch}
    \label{Fig:shared_switch}
\end{subfigure}

\begin{subfigure}[b]{0.3\textwidth}
    \centering
    \switch{black}{black}{red}
    \caption{A $\sigma$--switch point}
    \label{Fig:sigma-switch}
\end{subfigure}
\hfill
\begin{subfigure}[b]{0.3\textwidth}
    \centering
    \switch{black}{black}{blue}
    \caption{A $\tau$--switch point}
    \label{Fig:tau-switch}
\end{subfigure}
\hfill
\begin{subfigure}[b]{0.3\textwidth}
    \centering
    \switch{black}{red}{blue}
    \caption{A divergence point}
    \label{Fig:divergence}
\end{subfigure}

\caption{Neighbourhoods of points of $\sigma \cap \tau$ (up to reflection). Here $\sigma$ is shown in red, $\tau$ in blue and $\sigma \cap \tau$ in black.}
\end{figure}
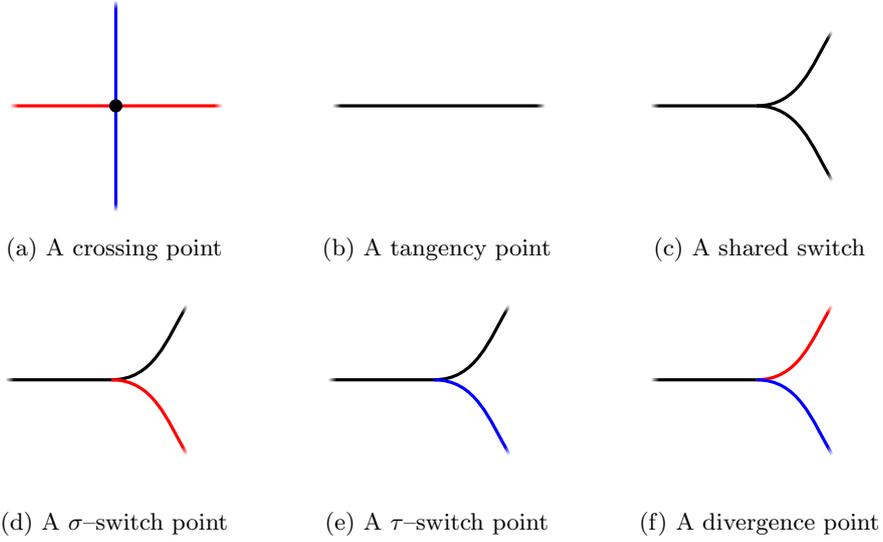

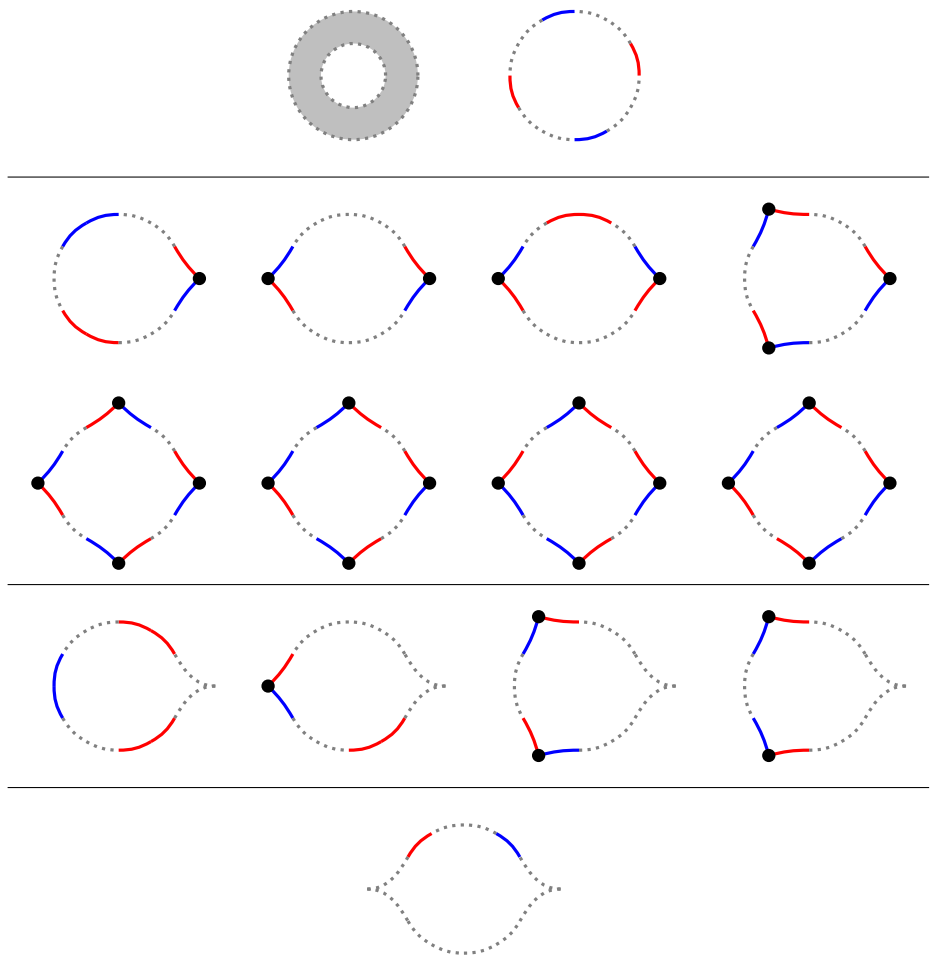
\begin{figure}

\centering
\begin{tikzpicture}[scale=0.85,very thick]
\draw [transparent] (0,0) circle (1.5);  
\draw [fill=lightgray, dotted, draw=gray] (0,0) circle (1);
\draw [fill=white, dotted, draw=gray] (0,0) circle (0.5);
\end{tikzpicture} 
\tikzdisk{180,180,180,180,180,180,180,180,180,180,180,180}{R,G,G,B,G,G,R,G,G,B,G,G}
\hrule

\tikzdisk{90,180,180,180,180,180,180,180,180,180,180,180}{R,G,G,B,B,G,G,R,R,G,G,B}
\tikzdisk{90,180,180,180,180,180,90,180,180,180,180,180}{R,G,G,G,G,B,R,G,G,G,G,B}
\tikzdisk{90,180,180,180,180,180,90,180,180,180,180,180}{B,G,R,R,G,B,R,G,G,G,G,R}
\tikzdisk{90,180,180,180,90,180,180,180,90,180,180,180}{R,G,G,R,B,G,G,R,B,G,G,B}

\tikzdisk{90,180,180,90,180,180,90,180,180,90,180,180}{R,G,B,R,G,B,R,G,B,R,G,B}
\tikzdisk{90,180,180,90,180,180,90,180,180,90,180,180}{R,G,R,B,G,B,R,G,B,R,G,B}
\tikzdisk{90,180,180,90,180,180,90,180,180,90,180,180}{R,G,R,B,G,R,B,G,B,R,G,B}
\tikzdisk{90,180,180,90,180,180,90,180,180,90,180,180}{R,G,R,B,G,B,R,G,R,B,G,B}
\hrule

\tikzdisk{0,180,180,180,180,180,180,180,180,180,180,180}{G,R,R,G,G,B,B,G,G,R,R,G}
\tikzdisk{0,180,180,180,180,180,90,180,180,180,180,180}{G,G,G,G,G,R,B,G,G,R,R,G}
\tikzdisk{0,180,180,180,90,180,180,180,90,180,180,180}{G,G,G,R,B,G,G,R,B,G,G,G}
\tikzdisk{0,180,180,180,90,180,180,180,90,180,180,180}{G,G,G,R,B,G,G,B,R,G,G,G}
\hrule

\tikzdisk{0,180,180,180,180,180,0,180,180,180,180,180}{G,B,G,G,R,G,G,G,G,G,G,G}

\caption{Legal high index regions (up to reflection and interchanging $\sigma$ and $\tau$). 
Here $\sigma$ is shown in red, $\tau$ in blue, $\sigma \cap \tau$ in black and unknown sections of $\sigma \cup \tau$ in dotted gray.}
\label{Fig:high_index_regions}
\end{figure}

\begin{remark}
\label[remark]{colour_changes}
A region $R$ is legal if and only if the number of colour changes in $\bdy R$
\begin{itemize}
\item is at least one if $R$ is a cusped bigon (has index zero and no corners) and
\item is at least $4 \cdot \ind(R)$ if $R$ is not a cusped bigon. \qedhere
\end{itemize}
\end{remark}

We denote the set of crossing points by $\sigma \pitchfork \tau$.
A maximal subarc of $\sigma \cap \tau$ consisting only of tangency points is called a \emph{shared branch} of $(\sigma, \tau)$.
We denote the set of shared branches of $(\sigma, \tau)$ by $B(\sigma \cap \tau)$.
Finally, we call a shared branch \emph{isolated} if both of its endpoints are divergence points.
See cases~1 and~2 in \cref{splitting}.

\begin{proposition}
\label[proposition]{tight_subtracks}
Suppose that $(\sigma, \tau)$ is a tight pair of train tracks.
If $\sigma'$ is a subtrack of $\sigma$ and $\tau'$ is a subtrack of $\tau$ then $(\sigma', \tau')$ is also a tight pair of train tracks.
\end{proposition}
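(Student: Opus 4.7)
The plan is to verify the two bullet points of \cref{tight} for $(\sigma',\tau')$. Since a subtrack is formed from its ambient train track by deleting the interiors of some branches, it suffices to handle the case in which $(\sigma',\tau')$ arises from $(\sigma,\tau)$ by deleting a single branch $b$ of $\sigma\cup\tau$; the general case then follows by induction on the number of deleted branches. Note that $\sigma'\cap\tau'\subseteq\sigma\cap\tau$, and each component of $S-(\sigma'\cup\tau')$ is the fusion along $b$ of one or two components of $S-(\sigma\cup\tau)$.

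For the local condition at points of $\sigma'\cap\tau'$, each such point $x$ had one of the six local types in $(\sigma,\tau)$. If $x$ is not at an endpoint of $b$, its local picture is unchanged. If $x$ is at an endpoint of $b$, then one branch-end has been removed, and I would verify directly from Figures~\ref{Fig:crossing}--\ref{Fig:divergence} that the resulting local picture is still one of the six allowed types, or else $x$ drops out of $\sigma'\cap\tau'$ entirely. For instance, removing one of the small branch-ends at a shared switch produces a $\sigma$--switch or a $\tau$--switch, and removing the large branch-end produces a tangency point; the $\sigma$--switch, $\tau$--switch, and divergence cases are handled by similar inspection.

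For the region condition, \cref{Additivity} gives that index is additive under fusion along $b$, so negative-index regions combine into negative-index regions. For the legal high-index regions enumerated in \cref{Fig:high_index_regions}, I would invoke \cref{colour_changes} and count colour changes on the boundary of the fused region $R$. Since the deleted branch $b$ is monochromatic in $\sigma\cup\tau$, its removal affects the colour-change count on $\bdy R$ only at the two endpoints of $b$; a case-by-case tally, using the assumed legality of the pre-fusion regions $R_1$ and $R_2$, shows that the colour-change count on $\bdy R$ still satisfies the inequality of \cref{colour_changes}, so $R$ remains legal.

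The main obstacle is this region-level analysis: since \cref{Fig:high_index_regions} lists many legal high-index configurations, one must check that no pairwise fusion along $b$, together with the boundary accounting at the endpoints of $b$, ever pushes the resulting region into an illegal configuration. I expect the verification to succeed because the listed high-index regions are already ``colour-change tight'' with respect to the legality inequality, and the endpoint accounting never destroys more colour changes than the index reduction on the legality side permits.
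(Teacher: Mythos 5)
Your overall strategy matches the paper's: reduce to deleting a single branch, observe that the index is additive under fusion (\cref{Additivity}), and then verify legality by counting colour changes via \cref{colour_changes}. The local-conditions check you perform first is a reasonable addition (the paper leaves it implicit), though one of your sub-cases is wrong: deleting the branch of $\sigma$ carrying the \emph{large} branch-end at a shared switch would leave a cusp in $\sigma'$, not a tangency, so $\sigma'$ would not be a valid train track and the case cannot arise; the legal ways to remove branch-ends at a switch are only the small ones.

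The real gap is that the region-level verification — which you correctly identify as ``the main obstacle'' — is never carried out. You assert that ``a case-by-case tally \ldots\ shows that the colour-change count \ldots\ still satisfies the inequality'' and later that you ``expect the verification to succeed because the listed high-index regions are already colour-change tight.'' Neither of these is a proof; they restate the conclusion. The paper's proof is precisely this tally: it reduces to the branch $b$ passing through the interior of $Q$ exactly once, so that $b$ enters in one of four shapes (\texttt{I}, \texttt{J}, \texttt{C}, \texttt{S}, \cref{Fig:branch_cross_disk}), and then exhaustively enumerates the pre-fusion regions $R$ and $R'$ and verifies the resulting colour-change bound by machine (\cref{cut_disk_verification}). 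The authors themselves note that a by-hand version runs to about six pages. Without that enumeration your argument has the right skeleton but no content where the work actually is. Your heuristic that the removed branch is monochromatic so ``its removal affects the colour-change count only at the two endpoints of $b$'' is part of what makes the enumeration finite, but it is not by itself a substitute for checking the cases; and you have not addressed why it suffices to treat $b$ as passing through $Q$ once (the paper states, but also does not fully justify, this reduction).
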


\begin{proof}
Without loss of generality we may assume that $\sigma$ and $\sigma'$ differ by a single branch $b$ and that $\tau = \tau'$.
Suppose that $Q$ is a region of $S - (\sigma' \cup \tau')$.
If $\ind(Q) < 0$ then we are done.

So suppose that $\ind(Q) \geq 0$.
If $Q$ is an annulus then it matches one of the cases shown in \cref{Fig:high_index_regions} and so we are done.

So suppose that $Q$ is (topologically) a disk.
If $b$ is disjoint from the interior of $Q$ then $\bdy Q$ must contain at least as many colour changes as $\bdy (Q \cup b)$. 
Therefore, by \cref{colour_changes}, $Q$ must also match one of the cases shown in \cref{Fig:high_index_regions} and so we are done.

So suppose that $b$ crosses through the interior of $Q$.
It is sufficient to prove the result when $b$ passes through the interior of $Q$ exactly once, splitting $Q$ into two subregions $R$ and $R'$.
Hence $Q$ and $b$ must match one of the four cases shown in \cref{Fig:branch_cross_disk}.

\begin{figure}
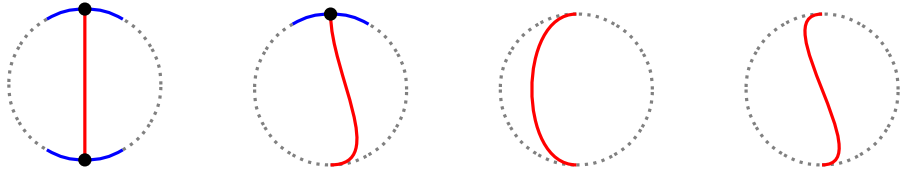

\centering
\begin{subfigure}[b]{0.2\textwidth}
    \centering
    \tikzdisksplit{180,180,180,180,180,180,180,180,180,180,180,180}{G,G,B,B,G,G,G,G,B,B,G,G}{4}{10}{0}{0} 
    \label{Fig:crossing-I}
\end{subfigure}
\hfill
\begin{subfigure}[b]{0.2\textwidth}
    \centering
    \tikzdisksplit{180,180,180,180,180,180,180,180,180,180,180,180}{G,G,B,B,G,G,G,G,G,G,G,G}{4}{10}{0}{-1}
    \label{Fig:crossing-J}
\end{subfigure}
\hfill
\begin{subfigure}[b]{0.2\textwidth}
    \centering
    \tikzdisksplit{180,180,180,180,180,180,180,180,180,180,180,180}{G,G,G,G,G,G,G,G,G,G,G,G}{4}{10}{-1}{1}
    \label{Fig:crossing-C}
\end{subfigure}
\hfill
\begin{subfigure}[b]{0.2\textwidth}
    \centering
    \tikzdisksplit{180,180,180,180,180,180,180,180,180,180,180,180}{G,G,G,G,G,G,G,G,G,G,G,G}{4}{10}{-1}{-1}
    \label{Fig:crossing-S}
\end{subfigure}

\caption{The branch $b$ can pass through $Q$ forming an \texttt{I}, \texttt{J}, \texttt{C} or \texttt{S}.}
\label{Fig:branch_cross_disk}
\end{figure}

We exhaustively enumerate all possibilities for $R$ and $R'$ and check, in each case, that the resulting $Q$ satisfies \cref{colour_changes}.
See \cref{cut_disk_verification} for our implementation.
\end{proof}

\begin{remark}
The exhaustive search at the end of the proof of \cref{tight_subtracks} is not particularly large -- we have carried out a version spanning six hand-written pages.  
However, it is a somewhat long proof by cases.
This would be eliminated if we had a ``chromatic index'' that accounts for colour changes (each an additional minus one-quarter) which is ``local'' -- which correctly distributes index to certain regions of $\sigma \cap \tau$ -- and which is additive. 
However, we have failed to find such an index.
\end{remark}

\begin{definition}[{\cite[page~19]{PennerHarer92}}]
Suppose that $(\sigma, \tau)$ is a tight pair of train tracks. 
If all points of $\sigma \cap \tau$ are crossing points then we say that $(\sigma, \tau)$ is a \emph{crossing} pair.
\end{definition}

Suppose that $(\sigma, \tau)$ is a tight pair of train tracks.
Following~\cite[page~197]{PennerHarer92}, define a matrix $M \from B(\sigma) \times B(\tau) \to \ZZ$ by $M(a, b) \defeq |a \pitchfork b|$.
That is, $M$ records the number of crossing points between each pair of branches. 
This gives a bilinear map $\pair{\cdot, \cdot} \from W(\sigma) \cross W(\tau) \to \ZZ$ via:
\[
\pair{\mu, \nu} \defeq \sum_{\substack{a \in B(\sigma)\\ b \in B(\tau)}} \mu(a) \, M(a, b) \, \nu(b)
\]

\begin{lemma}[{\cite[Remark, page~197]{PennerHarer92}}]
\label[lemma]{pair_is_intersection}
Suppose that $(\sigma, \tau)$ is a crossing pair of train tracks.
If $\mu \in V(\sigma)$ and $\nu \in V(\tau)$ then $\pair{\mu, \nu} = \iota(C(\mu), C(\nu))$. \qed
\end{lemma}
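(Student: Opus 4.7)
The plan is to establish the equality in two steps: first realize the pairing geometrically by placing $C(\mu)$ and $C(\nu)$ in fibered neighbourhoods of $\sigma$ and $\tau$, then show that this realization already achieves the geometric intersection number via the bigon criterion.

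For the realization step, I would fix smooth tie (fibered) neighbourhoods $N(\sigma)$ and $N(\tau)$. Since $(\sigma,\tau)$ is a \emph{crossing} pair, every point of $\sigma \cap \tau$ is transverse, so after a small isotopy I can arrange $N(\sigma)$ and $N(\tau)$ to be disjoint away from small disc neighbourhoods of the crossings, and within each such disc to cross transversely with their tie structures perpendicular to one another. Using non-negativity of $\mu$ and the switch equalities, I realize $C(\mu)$ as $\mu(a)$ disjoint tie-transverse arcs running along each branch $a \in B(\sigma)$, glued across switches; similarly for $C(\nu)$ in $N(\tau)$. Near a crossing of branches $a$ and $b$, the two families of arcs meet in exactly $\mu(a)\,\nu(b)$ transverse points. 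Summing gives
\[
|C(\mu) \cap C(\nu)| \; = \; \sum_{a,b} \mu(a)\, M(a,b)\, \nu(b) \; = \; \pair{\mu,\nu}.
\]

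For the minimality step, I would invoke the bigon criterion: two multi-curves in minimal position intersect $\iota$ times provided they bound no bigon. Suppose, for contradiction, that $C(\mu) \cup C(\nu)$ bounds an innermost bigon $B$ with $\partial B = \alpha \cup \beta$, where $\alpha \subset C(\mu)$ and $\beta \subset C(\nu)$. Using the tie projections, I would collapse $\alpha$ onto $\sigma$ and $\beta$ onto $\tau$, obtaining an immersed disc $B'$ in $S$ whose boundary is a concatenation of arcs alternately in $\sigma$ and $\tau$ with two corners. The image of $B'$ decomposes into complementary regions of $\sigma \cup \tau$ together with shared branches; because $(\sigma,\tau)$ is a crossing pair, the only combinatorial types of complementary region available are those in \cref{Fig:high_index_regions} whose boundary has no tangencies and no shared switches. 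An index count (\cref{Additivity}) combined with \cref{colour_changes} then forces the total index of $B'$ to be strictly less than $1/2$, contradicting $\ind(B) = 1/2$ for a genuine bigon.

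The main obstacle is the bigon-elimination step: making rigorous the claim that an innermost bigon between $C(\mu)$ and $C(\nu)$ descends, via the tie projections, to a disc whose combinatorial boundary uses only crossing-type complementary regions of $\sigma \cup \tau$, and then ruling this out by the index/colour-change accounting. The argument is essentially the classical one in~\cite[\S1.4 and page~197]{PennerHarer92}; the content specific to the present setting is that the crossing hypothesis eliminates the tangency, shared-switch, divergence, and one-sided switch configurations from \cref{tight}, restricting the census of legal regions enough that no positive-index bigon can appear in the projected image.
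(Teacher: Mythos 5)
The paper states this lemma without proof, citing Penner--Harer; there is no in-paper argument to compare against, and your sketch is reconstructing the cited one. The two-step structure --- realize the curves in tie neighbourhoods so that crossings count to $\pair{\mu,\nu}$, then eliminate bigons via the bigon criterion and an index count --- is exactly the Penner--Harer approach, and the realization step is fine. One place to sharpen the bigon-elimination step: for a \emph{crossing} pair there are no tangencies, so by \cref{colour_changes} every complementary region of $\sigma\cup\tau$ has \emph{zero} colour changes, and legality then forces each such region to have non-positive index and not to be a cusped bigon. In particular none of the positive-index regions from \cref{Fig:high_index_regions} can survive --- your phrasing suggests that those with no tangencies or shared switches might remain, but the crossing hypothesis excludes them all, and it is precisely this that feeds the \cref{Additivity} contradiction against a genuine index-$1/2$ bigon. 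The remaining technicalities (the tie collapse may yield an immersed rather than embedded disc, and the boundary pieces of the decomposition are only partial complementary regions) are the content of~\cite[\S1.4]{PennerHarer92}, as you correctly flag.
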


\begin{definition}
Suppose that $(\sigma, \tau)$ is a tight pair of train tracks.
If they have no isolated shared branches then we say that $(\sigma, \tau)$ is a \emph{clean} pair.
\end{definition}

\begin{remark}
A clean pair of train tracks has at most $2 \bee$ shared branches.
\end{remark}

We will often write $\Tau = (\sigma, \mu, \tau, \nu)$ for a pair of weighted train tracks $(\sigma, \mu)$ and $(\tau, \nu)$.
We say that such a pair is tight / crossing / clean if the underlying pair of train tracks $(\sigma, \tau)$ are.
Finally, if $e$ is a shared branch of $\sigma \cap \tau$ which is contained within $a \in B(\sigma)$ and $b \in B(\tau)$ then we define $\mu(e) \defeq \mu(a)$ and $\nu(e) \defeq \nu(b)$.

\subsection{Standard pairs of train tracks}

Recall that $\Delta = P \cup Q \cup R$ is our chosen set of cuffs, duals, and double duals on $S$.

\begin{definition}
Suppose that $\sigma$ and $\tau$ are a pair of train tracks in $S$.
Suppose that inside every annulus component of $N(P)$ we have that $\sigma$ and $\tau$ are as shown in \cref{Fig:standard_annulus}.
Suppose that inside every pair of pants component of $S - N(P)$ these are as shown in \cref{Fig:standard_pants}.
Then we say that the pair $(\sigma, \tau)$ is \emph{standard} (with respect to $\Delta$).
\end{definition}

\begin{figure}[htb]
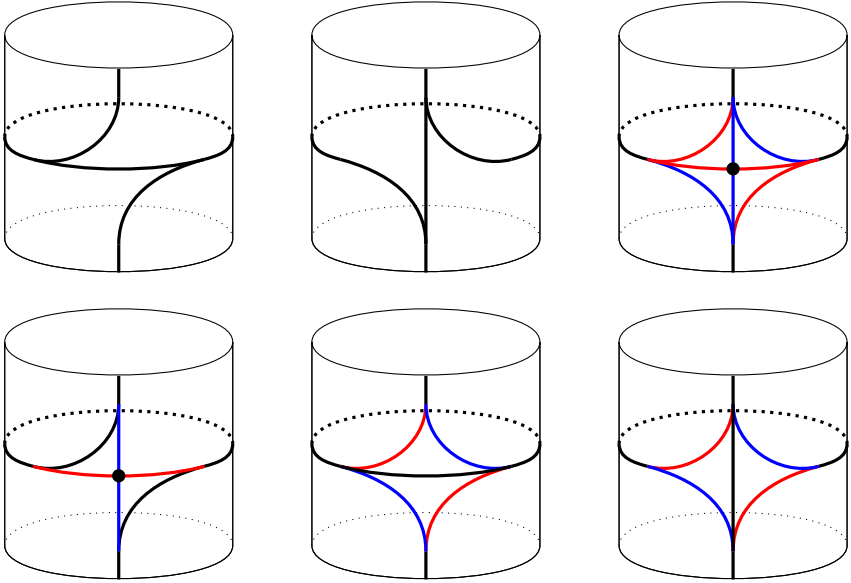

\centering
\hfill
\begin{minipage}{0.3\textwidth}
\standardannulus{black}{transparent}{black}{transparent}
\end{minipage}
\hfill
\begin{minipage}{0.3\textwidth}
\standardannulus{transparent}{black}{transparent}{black}
\end{minipage}
\hfill
\begin{minipage}{0.3\textwidth}
\standardannulus{red}{blue}{red}{blue}
\end{minipage}
\hfill

\hfill
\begin{minipage}{0.3\textwidth}
\standardannulus{black}{transparent}{red}{blue}
\end{minipage}
\hfill
\begin{minipage}{0.3\textwidth}
\standardannulus{red}{blue}{black}{transparent}
\end{minipage}
\hfill
\begin{minipage}{0.3\textwidth}
\standardannulus{red}{blue}{transparent}{black}
\end{minipage}
\hfill

\caption{The standard pairs of train tracks in an annulus (up to rotation, reflection and interchanging $\sigma$ and $\tau$).}
\label{Fig:standard_annulus}
\end{figure}

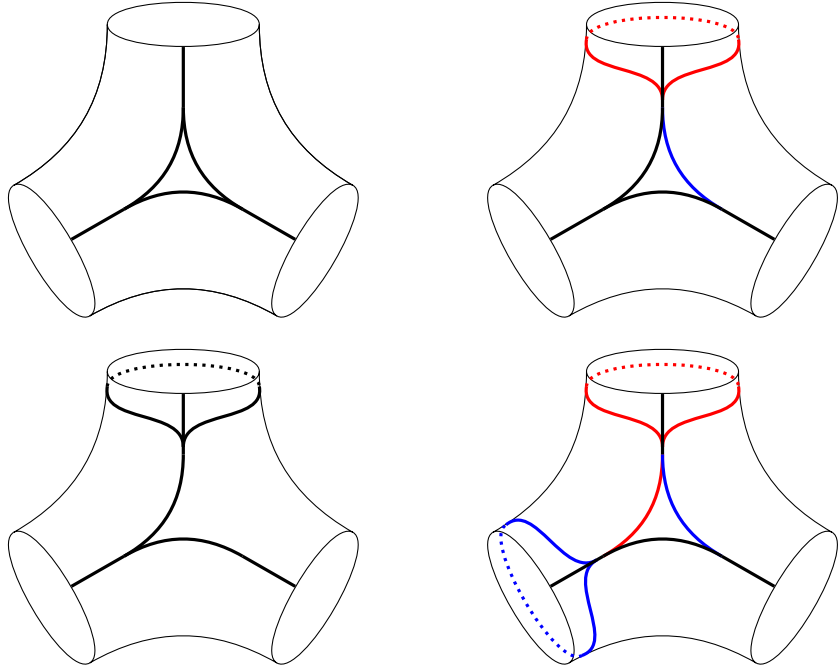
\begin{figure}[htb]
\begin{minipage}{0.48\linewidth}
\centering
\begin{tikzpicture}[scale=1, very thick]

\coordinate (A1) at (-1, 2);
\coordinate (B1) at (1, 2);

\begin{scope} [rotate=120]
    \coordinate (A2) at (-1, 2);
    \coordinate (B2) at (1, 2);
\end{scope}

\begin{scope} [rotate=240]
    \coordinate (A3) at (-1, 2);
    \coordinate (B3) at (1, 2);
\end{scope}

\draw [thin, fill=none] (A1) 
 to [out=270, in=30] (B2) to [out=30, in=30, looseness=0.5] (A2)
 to [out=30, in=150] (B3) to [out=150, in=150, looseness=0.5] (A3) 
 to [out=150, in=270] (B1) to [out=270, in=270, looseness=0.5] (A1);
\draw [thin] (A1) to [out=90, in=90, looseness=0.5] (B1);
\draw [thin] (A2) to [out=210, in=210, looseness=0.5] (B2);
\draw [thin] (A3) to [out=330, in=330, looseness=0.5] (B3);

\draw [thin] (A1) to [out=270, in=30] (B2);
\draw [thin] (A2) to [out=30, in=150] (B3);
\draw [thin] (A3) to [out=150, in=270] (B1);

\draw (90:1.7) to (90:0.9);
\draw (210:1.7) to (210:0.9);
\draw (330:1.7) to (330:0.9);
\draw (90:0.9) to [out=270, in=30] (210:0.9);
\draw (210:0.9) to [out=30, in=150] (330:0.9);
\draw (330:0.9) to [out=150, in=270] (90:0.9);

\end{tikzpicture}
\end{minipage}
\hfill
\begin{minipage}{0.48\linewidth}  
\centering
\begin{tikzpicture}[scale=1, very thick]

\coordinate (A1) at (-1, 2);
\coordinate (B1) at (1, 2);

\begin{scope} [rotate=120]
    \coordinate (A2) at (-1, 2);
    \coordinate (B2) at (1, 2);
\end{scope}

\begin{scope} [rotate=240]
    \coordinate (A3) at (-1, 2);
    \coordinate (B3) at (1, 2);
\end{scope}

\draw [thin, fill=none] (A1) 
 to [out=270, in=30] (B2) to [out=30, in=30, looseness=0.5] (A2)
 to [out=30, in=150] (B3) to [out=150, in=150, looseness=0.5] (A3) 
 to [out=150, in=270] (B1) to [out=270, in=270, looseness=0.5] (A1);
\draw [thin] (A1) to [out=90, in=90, looseness=0.5] (B1);
\draw [thin] (A2) to [out=210, in=210, looseness=0.5] (B2);
\draw [thin] (A3) to [out=330, in=330, looseness=0.5] (B3);

\draw [dotted, looseness=0.5, red] (-1.0, 1.8) to [out=90,in=90] (1.0, 1.8);

\draw [red] (90:1.0) to [out=90,in=280] (1.0, 1.8);
\draw [red] (-1.0, 1.8) to [out=260,in=90] (90:1.0);

\draw [blue] (330:0.9) to [out=150, in=270] (90:0.9);

\draw (90:1.7) to (90:0.9);
\draw (210:1.7) to (210:0.9);
\draw (330:1.7) to (330:0.9);
\draw (90:0.9) to [out=270, in=30] (210:0.9);
\draw (210:0.9) to [out=30, in=150] (330:0.9);

\end{tikzpicture}
\end{minipage}
\begin{minipage}{0.48\linewidth}
\centering
\begin{tikzpicture}[scale=1, very thick]

\coordinate (A1) at (-1, 2);
\coordinate (B1) at (1, 2);

\begin{scope} [rotate=120]
    \coordinate (A2) at (-1, 2);
    \coordinate (B2) at (1, 2);
\end{scope}

\begin{scope} [rotate=240]
    \coordinate (A3) at (-1, 2);
    \coordinate (B3) at (1, 2);
\end{scope}

\draw [thin, fill=none] (A1) 
 to [out=270, in=30] (B2) to [out=30, in=30, looseness=0.5] (A2)
 to [out=30, in=150] (B3) to [out=150, in=150, looseness=0.5] (A3) 
 to [out=150, in=270] (B1) to [out=270, in=270, looseness=0.5] (A1);
\draw [thin] (A1) to [out=90, in=90, looseness=0.5] (B1);
\draw [thin] (A2) to [out=210, in=210, looseness=0.5] (B2);
\draw [thin] (A3) to [out=330, in=330, looseness=0.5] (B3);

\draw [dotted, looseness=0.5] (-1.0, 1.8) to [out=90,in=90] (1.0, 1.8);

\draw (90:1.0) to [out=90,in=280] (1.0, 1.8);
\draw (-1.0, 1.8) to [out=260,in=90] (90:1.0);

\draw (90:1.7) to (90:0.9);
\draw (210:1.7) to (210:0.9);
\draw (330:1.7) to (330:0.9);
\draw (90:0.9) to [out=270, in=30] (210:0.9);
\draw (210:0.9) to [out=30, in=150] (330:0.9);

\end{tikzpicture}
\end{minipage}
\hfill
\begin{minipage}{0.48\linewidth}
\centering
\begin{tikzpicture}[scale=1, very thick]

\coordinate (A1) at (-1, 2);
\coordinate (B1) at (1, 2);

\begin{scope} [rotate=120]
    \coordinate (A2) at (-1, 2);
    \coordinate (B2) at (1, 2);
\end{scope}

\begin{scope} [rotate=240]
    \coordinate (A3) at (-1, 2);
    \coordinate (B3) at (1, 2);
\end{scope}

\draw [thin, fill=none] (A1) 
 to [out=270, in=30] (B2) to [out=30, in=30, looseness=0.5] (A2)
 to [out=30, in=150] (B3) to [out=150, in=150, looseness=0.5] (A3) 
 to [out=150, in=270] (B1) to [out=270, in=270, looseness=0.5] (A1);
\draw [thin] (A1) to [out=90, in=90, looseness=0.5] (B1);
\draw [thin] (A2) to [out=210, in=210, looseness=0.5] (B2);
\draw [thin] (A3) to [out=330, in=330, looseness=0.5] (B3);

\draw [dotted, looseness=0.5, red] (-1.0, 1.8) to [out=90,in=90] (1.0, 1.8);
\begin{scope}[rotate=120]
    \draw [dotted, looseness=0.5, blue] (-1.0, 1.8) to [out=90,in=90] (1.0, 1.8);
\end{scope}

\draw [red] (90:1.0) to [out=90,in=280] (1.0, 1.8);
\draw [red] (-1.0, 1.8) to [out=260,in=90] (90:1.0);

\draw [red] (90:0.9) to [out=270, in=30] (210:0.9);

\begin{scope}[rotate=120]
    \draw [blue] (90:1.0) to [out=90,in=280] (1.0, 1.8);
    \draw [blue] (-1.0, 1.8) to [out=260,in=90] (90:1.0);
    
    \draw [blue] (210:0.9) to [out=30, in=150] (330:0.9);
\end{scope}

\draw (90:1.7) to (90:0.9);
\draw (210:1.7) to (210:0.9);
\draw (330:1.7) to (330:0.9);
\draw (210:0.9) to [out=30, in=150] (330:0.9);

\end{tikzpicture}
\end{minipage}
\caption{The standard pairs of train tracks in a pair of pants (up to rotation, reflection, and interchanging $\sigma$ and $\tau$).}
\label{Fig:standard_pants}
\end{figure}

There are $14^{3g-3} \cdot 16^{2g-2}$ standard pairs of tracks on $S_{g, 0}$.

\begin{lemma}
\label[lemma]{standard_tight}
Standard pairs of train tracks are tight.
\end{lemma}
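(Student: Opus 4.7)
The plan is to verify the two conditions of \cref{tight} separately, exploiting the fact that both are local. The first condition is about neighbourhoods of intersection points; the second can be checked via \cref{colour_changes}, which reduces legality to counting colour changes around the boundary of each region.

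For the intersection condition, I would directly inspect each standard model. Every point of $\sigma \cap \tau$ lies inside some annular component of $N(P)$ or some pants component of $S - N(P)$: the tracks cross the cuff-parallel boundary circles in a prescribed way, so no intersection point occurs there. Enumerating the intersection points appearing in the six annulus models of \cref{Fig:standard_annulus} and the four pants models of \cref{Fig:standard_pants}, one sees that each is a crossing, tangency, shared switch, $\sigma$--switch, $\tau$--switch, or divergence. This step is routine.

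For the region condition, I would first classify the subregions of $S - (\sigma \cup \tau)$ that lie inside a single local piece. A direct inspection of the standard models shows that each such subregion either has negative index (and is already legal), or is a cusped bigon, or abuts a boundary circle of its local piece. I would then analyse how these abutting subregions glue together across the annulus-pants interfaces. The data each local piece presents at a boundary circle of the annular neighbourhood of $P$ is combinatorially small (empty, a single black strand, a switch of prescribed type, or a pair of coloured strands without a switch), so the compatibility table between matching annulus-ends and pants-ends has bounded size. Running over this table, one checks that each glued region of $S - (\sigma \cup \tau)$ either inherits negative index or matches one of the patterns in \cref{Fig:high_index_regions}.

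The main obstacle is the gluing analysis: when two subregions are joined along a boundary arc, the Euler characteristic and the counts of cusps and corners all change, so indices do not add naively. I plan to track the boundary colour-change count through each gluing and invoke \cref{colour_changes} to certify legality, much as in the exhaustive argument at the end of the proof of \cref{tight_subtracks}. Verifying that no illegal region can arise in any admissible gluing is the crux of the argument; once that is in hand, the remaining cases follow from routine inspection of the standard models.
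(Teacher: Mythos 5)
Your proposal is essentially the same as the paper's: the paper's proof is precisely an exhaustive enumeration of all admissible gluings of annulus-pieces to pants-pieces, verified against \cref{colour_changes} (they delegate the casework to a short Python script in \cref{standard_pairs_verification}). You additionally spell out the easy first condition of \cref{tight} (classifying intersection-point types), which the paper takes as evident from Figures~\ref{Fig:standard_annulus} and \ref{Fig:standard_pants}, but the crux — tracking boundary colour-changes across the annulus--pants interface and invoking the colour-change criterion — is the same argument.
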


\begin{proof}
We exhaustively enumerated all possible combinations of pants and annuli and checked that in each case the resulting complementary regions satisfied \cref{colour_changes}.
See \cref{standard_pairs_verification} for an implementation.
\end{proof}





\begin{proposition}
\label[proposition]{ConvertDeltaTrain}
Suppose that $\alpha, \beta \in C(S)$ are multicurves.
Given $\Delta(\alpha)$ and $\Delta(\beta)$, we can compute a tight pair of weighted train tracks $(\sigma, \mu, \tau, \nu)$ such that $C(\mu) = \alpha$ and $C(\nu) = \beta$.
Furthermore, this can be done in $O(n)$ time where $n = \complexity{\Delta(\alpha)} + \complexity{\Delta(\beta)}$.
\end{proposition}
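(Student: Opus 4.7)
The plan is to reduce to $(m,s,t)$--coordinates and then assemble the pair of weighted train tracks locally, pants-by-pants and annulus-by-annulus.

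First, apply \cref{ConvertDeltaMST} to convert $\Delta(\alpha)$ and $\Delta(\beta)$ into $(m,s,t)$--coordinates in $O(n)$ time. This data describes, for each cuff $\delta \in P$, the intersection numbers $m_\delta(\alpha), m_\delta(\beta)$ of the two multicurves with $\delta$ together with signed twisting parameters $s_\delta(\alpha), s_\delta(\beta)$ around $\delta$; and, for each pair of pants $Y$ of $S - N(P)$, the arc types $\alpha \cap Y$ and $\beta \cap Y$ are determined by the three $m$--values on the cuffs of $Y$ via the standard pants-arc count (see Expos\'e~4 of \cite{FLP12}).

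Next, for each annular component of $N(P)$ pick the standard pair from \cref{Fig:standard_annulus} whose ``straight-through'' branches carry $\alpha$ and $\beta$ (making the annulus empty if the corresponding $m$--value vanishes) and whose diagonal branch twists in the direction indicated by the sign of the twisting parameter. For each pants component of $S - N(P)$ pick the standard pair from \cref{Fig:standard_pants} whose red and blue substructures are chosen according to which pants-arc families of $\alpha$ and $\beta$ are present (this depends only on which of the three cuff-intersections are zero and on parity relations between them). In each case the choice among a constant number of configurations is made in $O(1)$ time, with constants polynomial in $|\chi(S)|$. Gluing these local choices along the cuff circles produces train tracks $\sigma$ and $\tau$; the resulting pair is standard, hence tight by \cref{standard_tight}.

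Finally, define the weightings. On each annulus, $\mu$ assigns the cuff-intersection $m_\delta(\alpha)$ to the straight-through branches of $\sigma$ and $|s_\delta(\alpha)|$ to the diagonal branch (and symmetrically for $\nu$); on each pants, $\mu$ and $\nu$ assign to each branch the (non-negative integer) number of arcs in the corresponding pants-arc family, which is an explicit half-sum or difference of the three cuff $m$--values. By construction the weights are non-negative and satisfy the switch equality at every switch where a cuff meets a pants, so $(\sigma, \mu)$ and $(\tau, \nu)$ are carried multi-curves with $C(\mu) = \alpha$ and $C(\nu) = \beta$.

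For the running time, there are $O(|\chi(S)|)$ local pieces, and within each piece every weight is obtained from the $(m,s,t)$--coordinates by a constant number of additions, subtractions, halvings and absolute values on integers of total complexity $O(n)$; thus the whole assembly takes $O(n)$ time. The main thing to be careful about is the case analysis selecting the correct standard configuration when some $m$-- or $s$--parameter vanishes, since these degenerate cases determine whether the annular diagonal branch, or certain pants loops, should be dropped; but the full list of cases is finite, bounded polynomially in $|\chi(S)|$, and can be pre-tabulated once.
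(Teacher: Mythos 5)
Your approach is essentially the paper's — convert to $(m,s,t)$--coordinates via \cref{ConvertDeltaMST}, assemble the tracks locally from the standard annulus and pants templates, assign weights, and conclude linear running time — but there is a genuine gap in the tightness argument. You assert that, after choosing local configurations so that empty arc families of $\alpha$ and $\beta$ are dropped, ``the resulting pair is standard, hence tight by \cref{standard_tight}.'' That step does not hold in general. The standard pairs in \cref{Fig:standard_annulus} and \cref{Fig:standard_pants} form a fixed finite list of fully-loaded local configurations; once the zero-weight branches are removed (which is needed, since $\mu$ and $\nu$ must actually be weightings on $\sigma$ and $\tau$, not merely non-negative weightings extended by zero), what remains is typically a proper subtrack of a standard configuration, and such a subtrack need not appear anywhere in that list. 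For example, in a pants piece where only one arc family of $\alpha$ is present, or in an annulus where a straight-through branch of $\sigma$ has weight zero but the diagonal does not, the surviving configuration is not one of the catalogued standard pieces.

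The paper closes exactly this gap: it first builds the full standard pair with some branch weights allowed to vanish, then removes the zero-weight branches, and invokes \cref{tight_subtracks} (tightness is inherited by subtracks) together with \cref{standard_tight} to conclude that the pruned pair is still tight. Your proof never invokes \cref{tight_subtracks}, so it does not establish tightness of the pair $(\sigma,\tau)$ that actually carries $\alpha$ and $\beta$. Everything else in your write-up — the use of the pants arc counts and twisting parameters to produce the weights, and the $O(n)$ time bound with constants polynomial in $|\chi(S)|$ — matches the paper.
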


\begin{proof}
First, given $\Delta(\alpha)$ and $\Delta(\beta)$, their intersections with the cuffs $P$ determines (possibly more than one) standard pair of train track in each pair of pants and its weights (some of which may be zero).

Second, for each curve in $P$ we can compute the $(m, s, t)$--coordinates of $\alpha$ and $\beta$ (with respect to the dual and double dual curves in $Q$ and $R$) \cite[Proposition~C.1]{FLP12} \cite[Page~120]{ThurstonTravaux79}.
These $(m, s, t)$--coordinates determine (possibly more than one) standard pair of train tracks in each annulus and its weights (some of which may be zero).

Removing the branches of weight zero, the resulting pair of train tracks is tight by \cref{tight_subtracks} and \cref{standard_tight}.

By \cref{ConvertDeltaMST} the resulting pair of train tracks and its weights can be computed in $O(n)$ time.
\end{proof}

\section{Improving}

In this section we describe moves that can be applied to a tight pair of weighted train tracks $\Tau = (\sigma, \mu, \tau, \nu)$.
These moves are analogous to those used by Agol--Hass--Thurston \cite[Section~4]{AHT06} for simplifying weighted train tracks. 
See also the work of Erickson--Nayyeri~\cite{EricksonNayyeri13}.

We will show how the correct sequence of moves can be used to reach a crossing pair of weighted train tracks.
In particular, this process can be applied starting from a substandard pair of weighted train tracks since these are tight by \cref{tight_subtracks}.

When $(\sigma, \mu)$ and $(\tau, \nu)$ correspond to multi-curves, these moves preserve the geometric intersection number of $C(\mu)$ and $C(\nu)$.
Thus we can then immediately calculate $\intersection(C(\mu), C(\nu))$ via $\pair{\cdot, \cdot}$ once we have reached a crossing pair.

From now on we will assume that each train track $\tau$ comes with a chosen ordering of its branches $(b_1, b_2, \ldots, b_k)$.
This allows us to explicitly write weightings on $\tau$ as vectors and various linear transformations between them as matrices.
We use these to measure the complexity of weightings and such linear transformations.
Since train tracks have at most $\bee$ branches, all of the following matrices are at most $\bee \times \bee$.

\subsection{Splitting}

We now describe our first combinatorial move that can be applied to a tight pair of weighted train tracks $\Tau = (\sigma, \mu, \tau, \nu)$: splitting.
    
Let $e$ be a shared branch of $\Tau$ as shown in the Source column of \cref{splitting} or \cref{conditional_splitting}.
There we draw $\sigma$ in red, $\tau$ in blue and the shared branches in black.
We refer to such a shared branch as \emph{splittable}.
Then the result of \emph{splitting} $\Tau$ along $e$ is a new pair of weighted train tracks $\Tau' = (\sigma', \mu', \tau', \nu')$ shown in the corresponding row of the Target column.

There are linear transformations $A$ and $B$, depending only on the case that we are in, such that 
\[ \mu' = A(\mu) \inlineand \nu' = B(\nu). \]
We record these via an \emph{update rule} $U = (\sigma', \tau', A, B)$ of \emph{complexity} $\complexity{U} \defeq \complexity{A} + \complexity{B}$.
We denote the application of an update rule via
\[ U(\Tau) \defeq (\sigma', A(\mu), \tau', B(\nu)). \]
and the composition with another update rule $U' = (\sigma'', \tau'', A', B')$ via
\[ U' \circ U \defeq (\sigma'', \tau'', A' \circ A, B' \circ B). \]

\subsection{Untwisting}
\label{Sec:untwisting}

We now describe our second combinatorial move that can be applied to a tight pair of weighted train tracks $\Tau = (\sigma, \mu, \tau, \nu)$: untwisting.

An oriented train cycle $c$ in $\sigma \cap \tau$ is \emph{compatibly combed} in both $\sigma$ and $\tau$ it exits out of the left (or equivalently right) small branch-ends at every switch that it enters.
For example, see \cref{Fig:compatible_combed}.
We note that $\sigma \cap \tau$ has at most $2\bee$ compatibly combed train cycles.

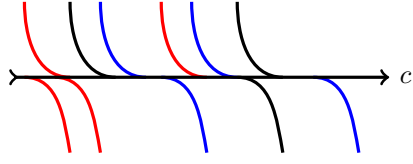
\begin{figure}[ht]
\centering
\begin{tikzpicture}[scale=2, very thick]


\foreach \x in {0.4,1.3} {\draw [red] (\x-0.3,0.5) to [out=-90,in=180] (\x,0);}
\foreach \x in {0.1,0.3} {\draw [red] (\x,0) to [out=0,in=100] (\x+0.3,-0.5);}

\foreach \x in {0.9,1.5} {\draw [blue] (\x-0.3,0.5) to [out=-90,in=180] (\x,0);}
\foreach \x in {1.0,2.0} {\draw [blue] (\x,0) to [out=0,in=100] (\x+0.3,-0.5);}

\foreach \x in {0.7,1.8} {\draw [black] (\x-0.3,0.5) to [out=-90,in=180] (\x,0);}
\foreach \x in {1.5} {\draw [black] (\x,0) to [out=0,in=100] (\x+0.3,-0.5);}

\draw [>->] (0,0) -- (2.5,0) node [right] {$c$};


\end{tikzpicture}
\caption{A (left) compatibly combed train cycle $c$ in $\sigma \cap \tau$.}
\label{Fig:compatible_combed}
\end{figure}

Suppose that $c$ is such a compatibly combed train cycle in $\sigma \cap \tau$.
Let $X$ denote the set of branches of $\sigma$ that are contained in $c$ and let $Y$ denote the set of branches of $\sigma$ which meet $c$ but are not contained in it.
Let $X'$ and $Y'$ be defined analogously for $\tau$.
Then 
\[ p \defeq 2 \min_{a \in X} \mu(a) \fdiv \sum_{a \in Y} \mu(a) \]
and
\[ q \defeq 2 \min_{b \in X'} \nu(b) \fdiv \sum_{b \in Y'} \nu(b) \]
record the number of times that $(\sigma, u)$ and $(\tau, v)$ wind around $c$ respectively.
Therefore we may untwist $r \defeq \min(p, q)$ times without changing the combinatorics.
We refer to $r$ as the \emph{rotation number} of $\Tau$ along $c$.

The result of \emph{untwisting} $\Tau$ along $c$ is $\Tau' \defeq (\sigma, A(\mu), \tau, B(\nu))$ where
\[ A(\mu)(a) \defeq \begin{cases}
\mu(a) - r \sum_{y \in Y}  \mu(y) & \textrm{if $a \in X$} \\
\mu(a) & \textrm{otherwise}
\end{cases}
\]
and
\[ B(\nu)(b) \defeq \begin{cases}
\nu(b) - r \sum_{y \in Y'} \nu(y) & \textrm{if $b \in X'$} \\
\nu(b) & \textrm{otherwise}
\end{cases}
\]
are linear transformations.
Again, we write this as an update rule $U = (\sigma, \tau, A, B)$.

\begin{remark}
Although $c$ is also a compatibly combed train cycle in $\Tau'$, its rotation number in $\Tau'$ is $0$ by construction.
\end{remark}

\subsection{Separating}

We now describe our third combinatorial move that can be applied to a tight pair of weighted train tracks $\Tau = (\sigma, \mu, \tau, \nu)$: separating.

A train cycle $c$ in $\sigma \cap \tau$ is \emph{incompatibly combed} if in $\sigma$ it uses the left (respectively right) small branch-ends at every switch that it enters and in $\tau$ it exits uses the left (resp. right) small branch-ends at every switch that it enters.
For example, see \cref{Fig:incompatible_combed}.
We again note that $\sigma \cap \tau$ has at most $2\bee$ incompatibly combed train cycles.

\begin{figure}[ht]
\centering
\begin{subfigure}{0.4\textwidth}
    \begin{tikzpicture}[scale=2, very thick]


\foreach \x in {0.9,1.1,1.4} {\draw [red] (\x-0.2,0.5) to [out=-90,in=180] (\x,0);}
\foreach \x in {0.4,1.7} {\draw [blue] (\x,0) to [out=0,in=270] (\x+0.1,0.5);}

\foreach \x in {1.5} {\draw [red] (\x,0) to [out=0,in=90] (\x+0.1,-0.5);}
\foreach \x in {0.3,1.2} {\draw [blue] (\x,0) to [out=180,in=90] (\x-0.1,-0.5);}

\draw [>->,very thick, black] (0,0) -- (2,0) node [right] {$c$};


\node [] at(0.6, 0.4) {\small $1$};
\node [] at(0.8, 0.4) {\small $2$};
\node [] at(1.5, 0.4) {\small $3$};
\node [] at(0.2, 0.4) {\small $4$};

\end{tikzpicture}
    \caption{Before.}
    \label{Fig:incompatible_combed}
\end{subfigure}
\quad
\begin{subfigure}{0.4\textwidth}
    \begin{tikzpicture}[scale=2, very thick]


\foreach \x in {0.9,1.1,1.4} {\draw [red] (\x-0.2,0.5) to [out=-90,in=180] (\x,0.1);}
\foreach \x in {0.4,1.7} {\draw [blue] (\x,-0.1) to [out=0,in=270] (\x+0.1,0.5);}

\foreach \x in {1.5} {\draw [red] (\x,0.1) to [out=0,in=90] (\x+0.1,-0.5);}
\foreach \x in {0.3,1.2} {\draw [blue] (\x,-0.1) to [out=180,in=90] (\x-0.1,-0.5);}

\draw [>->,very thick, red] (0,0.1) -- (2,0.1);
\draw [>->,very thick, blue] (0,-0.1) -- (2,-0.1);


\node [dot] at (0.54, 0.1) {};
\node [dot] at (1.84, 0.1) {};
\node [dot] at (1.64, -0.1) {};

\node [] at(0.6, 0.4) {\small $1$};
\node [] at(0.8, 0.4) {\small $2$};
\node [] at(1.5, 0.4) {\small $3$};
\node [] at(0.2, 0.4) {\small $4$};

\end{tikzpicture}
    \caption{After.}
    \label{Fig:combed_separated}
\end{subfigure}
\caption{An incompatibly combed train cycle $c$ in $\sigma \cap \tau$.}
\label{fig:incompatibly_combed}
\end{figure}
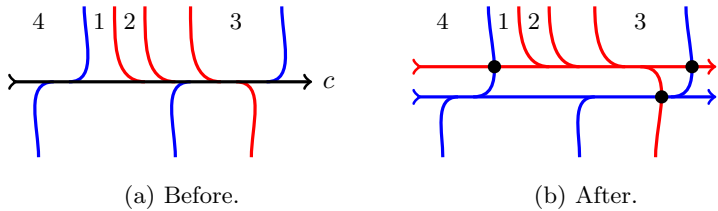

Suppose $c$ is such an incompatibly combed train cycle in $\sigma \cap \tau$.
The result of \emph{separating} $\Tau$ along $c$ is
\[ \Tau' \defeq (\sigma', \mu, \tau', \nu) \]
where $\sigma'$ and $\tau'$ are obtained by pushing $\sigma$ and $\tau$ off of different sides of $c$.
For example, see \cref{Fig:combed_separated}.

Again, we write this as an update rule $U = (\sigma', \tau', \Id, \Id)$.

\begin{remark}
If a train cycle $c$ in $\sigma \cap \tau$ does not contain any switches then it is incompatibly combed.
Thus $c$ can be separated into two disjoint cycles, one in $\sigma$ and one in $\tau$.
\end{remark}

\begin{proposition}
\label[proposition]{remains_tight}
Suppose that $\Tau$ is a tight pair of weighted train tracks.
Suppose that $\Tau'$ is the result of splitting, untwisting, or separating $\Tau$.
Then $\Tau'$ is again tight.
\end{proposition}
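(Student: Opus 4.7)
The plan is to handle the three moves separately, reducing each one to a local verification in the spirit of \cref{tight_subtracks} and \cref{standard_tight}.

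The untwisting case is essentially free: by construction this move alters only the weight vectors $\mu$ and $\nu$, leaving both underlying train tracks $\sigma$ and $\tau$ and their set-theoretic intersection $\sigma \cap \tau$ unchanged. Since tightness is a property of the pair $(\sigma,\tau)$ as subsets of $S$ (see \cref{tight}), the output pair is tight precisely because the input pair is.

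For splitting, I would argue locally. Fix the splittable shared branch $e$, and let $U \subset S$ be a regular neighbourhood of the closure of $e$ together with the switches at its ends. Outside of $U$ the move does nothing, so the intersection points outside $U$ remain of the allowed types and the complementary regions outside $U$ are unchanged. Inside $U$, each of the cases in \cref{splitting} and \cref{conditional_splitting} replaces a specific local picture by another specific local picture; one checks directly that every new point of $\sigma' \cap \tau'$ inside $U$ matches one of the six types in \cref{tight} (crossing, tangency, shared switch, $\sigma$- or $\tau$-switch, divergence). The global complementary regions that meet $U$ are obtained from the old ones by a local surgery of bounded size; using the colour-change characterisation of \cref{colour_changes} together with the additivity of the index (\cref{Additivity}), one verifies in each case that the new regions remain legal. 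This is a finite case check, directly analogous to the one carried out in the proof of \cref{tight_subtracks}.

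For separating, let $U$ be a regular neighbourhood of the incompatibly combed cycle $c$. Outside $U$ the pair $(\sigma,\tau)$ is unchanged, so again only local verification is needed. Because $c$ is incompatibly combed, at each switch on $c$ the two small branch-ends attach to the same side of $c$, and at each point of $c - S(\sigma \cup \tau)$ the two tracks are already tangent. Pushing $\sigma$ to one side of $c$ and $\tau$ to the other eliminates every shared point along $c$, converts the former switches on $c$ into $\sigma$-switches or $\tau$-switches lying off $c$, and inserts a thin cusped-bigon annulus between the two pushed-off copies of $c$. One checks that the new switches are of the allowed types and that the newly created region between $\sigma'$ and $c$ (and between $\tau'$ and $c$) either has negative index or is the annular bigon allowed in \cref{Fig:high_index_regions}; the unaffected regions retain whatever legality they had before.

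I expect the main obstacle to be bookkeeping rather than conceptual difficulty: as with \cref{tight_subtracks}, an exhaustive enumeration of the local pictures (and their reflections / swaps of $\sigma \leftrightarrow \tau$) is what drives the argument, and one has to be careful to verify \cref{colour_changes} in each sub-case, particularly for the conditional splits and for separations where $c$ carries switches.
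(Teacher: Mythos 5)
Your overall plan (treat the three moves separately and argue locally) and your treatment of untwisting match the paper's argument exactly. For splitting, your approach is also broadly correct; but note that the paper's verification is a structured hand argument, not a computer-enumerable check of the sort used for \cref{tight_subtracks}. The regions adjacent to the cusps at the ends of $e$ can be arbitrarily complicated, so one cannot simply list all possibilities; instead the paper organizes the argument by whether the split leaves all region topologies unchanged, is central (merging the two cusp regions), or introduces a crossing (promoting cusps to corners), and in each subcase tracks how the index and the colour-change count of \cref{colour_changes} transform together.

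The separating case has a genuine gap. You assert that ``the unaffected regions retain whatever legality they had before,'' but the regions lying outside the newly formed annulus $A$ that touch $\bdy A$ are not unaffected: each shared (black) arc of such a region's boundary along $c$ becomes single-coloured ($\sigma'$ or $\tau'$), and any branch of $\sigma$ or $\tau$ that must cross from one side of $c$ to the other introduces a new crossing point (see \cref{Fig:combed_separated}) and hence new corners in the boundary of those outer regions. These modifications can reduce the colour-change count and raise the index of an outer region, so its legality after the separation is not automatic. The paper closes this gap with an induction: each outer region $R'$ meets $\bdy A$ in finitely many arcs, and one passes from the original region $R$ to $R'$ by modifying the boundary one arc at a time, verifying in four sub-cases (the four ways an arc of $\bdy A$ can sit in $\bdy R'$) that any increase in index is exactly offset by new corners, cusps, or colour changes, so \cref{colour_changes} holds at every stage. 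Without an argument of this shape your proof does not establish legality of the outer regions, and your closing caveat about ``separations where $c$ carries switches'' points at the problem without resolving it.
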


\begin{proof}
We consider each of the three possible types of moves: splitting, untwisting, and separating. 

Suppose that $\Tau'$ is the result of splitting $\Tau$ at the edge $e$.
All splits (up to various symmetries) are shown in \cref{splitting} and \cref{conditional_splitting}.
Let $R$ and $Q$ be the regions of $\Tau$ containing the cusps at the ends of $e$. 
Let $R'$ and $Q'$ be the regions of $\Tau'$ induced by $R$ and $Q$. 
There are three subcases: the split does not change the topology of any region, the split is central, or the split introduces a crossing.

\begin{itemize}
\item
Suppose that the split on $e$ does not change the topology of any region. 
Then the two cusps at $e$ give cusps of $\Tau'$. 
We deduce that $R$ is homeomorphic to $R'$.
Thus $\ind(R') = \ind(R)$. 
Also, $R'$ has cusps in the same locations as $R$; 
thus they have the same number of colour changes. 
Thus the legality of $R$ implies that of $R'$. 
The same holds for $Q$ and $Q'$. 

\item
Suppose that the split on $e$ is central.  
Suppose first that $R = Q$.  
Then $R' = Q'$ has topology (at least that of an annulus) and so is legal. 
Suppose instead that $R$ and $Q$ are distinct. 
So the split merges $R$ and $Q$ to form the region $R' = Q'$.
By \cref{Additivity}, and since two corners have the same total index as one cusp, we find that $\ind(R') = \ind(R) + \ind(Q)$. 
Also, $R'$ has at least as many colour changes as $R$ and $Q$ combined. 
Thus $R'$ is legal. 

\item
Suppose that the split on $e$ produces a crossing in $\Tau'$.
If $R = Q$ then we have that $R' = Q'$. 
The split at $e$ causes two cusps (in $R$) to become corners (in $R'$).
Thus $\ind(R') = \ind(R) + 1/2$. 
Also, $R'$ has the same number of colour changes as $R$. 
Recall that $R$ was legal.  
There are two subsubcases: 
$R$ either appears in \cref{Fig:high_index_regions} or $R$ has negative index. 
Suppose that $R$ appears in the figure; 
thus $R$ is a cusped bigon with at least one colour change. 
In this case $R'$ has two corners and at least one colour change, so is legal. 
Suppose that $R$ has negative index. 
Thus $R'$ has index either zero or one-quarter. 
Since $R'$ has corners, it is a disk. 
If $\ind(R') = 0$ then $R'$ either has two corners and a cusp or has four corners.  
If $\ind(R') = 1/4$ then $R'$ has three corners. 
In any case, $R'$ appears in \cref{Fig:high_index_regions}. 
The subcase where $R \neq Q$ is similar, except the change of index is only one-quarter (instead of one-half). 
\end{itemize}

Suppose that $\Tau'$ is the result of untwisting $\Tau$.
Then the underlying train tracks do not change.
So all regions of $\Tau'$ are legal. 

Suppose, finally, that $\Tau'$ is the result of separating $\Tau$ along a cycle $c$. 
Consulting \cref{fig:incompatibly_combed} we find that in a neighbourhood of $c$ we have an annulus $A$ cobounded by parallel copies of $c$ in $\sigma'$ and $\tau'$.
Suppose that $R'$ is a region of $\Tau'$.
If $R'$ lies inside of $A$ then $R'$ is either the whole of $A$ or has one cusp and two corners.
In either case $R'$ is legal (\cref{tight}).

Suppose instead that $R'$ lies outside of $A$.
The region $R'$ can meet the boundary of $A$ in four different ways; 
these are enumerated in \cref{fig:incompatibly_combed}.
Let $R$ be the region of $\Tau$ inducing $R'$.
Suppose that $R'$ meets the boundary of $A$ in $n$ arcs, say $(\alpha_k)_{k = 0}^{n-1}$.
We build a sequence 
\[
R = R_0, R_1, \ldots, R_k, \ldots, R_n = R'
\]
of regions by modifying the boundary of $R_k$ in a small neighbourhood of $\alpha_k$, in one of the four ways allowed by \cref{fig:incompatibly_combed}.
We induct on $k$ to prove that $R_n = R'$ is legal. 
So, suppose that $R_k$ is legal. 
\begin{itemize}
\item 
Suppose that $R_k$ and $R_{k + 1}$ differ by a piece of type two or of type four.
Then they have the same index and the same number of colour changes.  
\item
Suppose that $R_k$ and $R_{k + 1}$ differ by a piece of type three. 
Then $R_{k + 1}$ has one less colour change than $R_k$, but has one more corner.
\item 
Suppose that $R_k$ and $R_{k + 1}$ differ by a piece of type one.
Then the index increases by one-quarter; 
however the arcs of $\bdy R_k$ adjacent to $\alpha_k$ have different colours.  
Thus $R_{k + 1}$ has either additional cusps or corners or has at least one colour change (or both). 
\end{itemize}
Applying \cref{colour_changes}, we find that $R_{k + 1}$ is legal, as desired.
\end{proof}

\begin{remark}
\cref{remains_tight} should be contrasted with Lemma~10.4 of \cite{Dynnikov22}. 
Our version of tightness allows us to always be considering tracks embedded in $S$. 
This allows us to control the topology of their intersection and to deal with the case where $S$ is closed. 
\end{remark}

\subsection{Complexities}

We define several complexities that will be used to prove correctness of the algorithms which follow and bound their running times.

\begin{definition}
Suppose that $\Tau = (\sigma, \mu, \tau, \nu)$ is a clean pair of weighted train tracks.
We define its \emph{tightness} $\tightness{\Tau}$ to be the number of shared branches plus the number of shared switches of $(\sigma, \tau)$.
\end{definition}

\begin{lemma}
For any clean pair of weighted train tracks $\Tau$ we have that $0 \leq \tightness{\Tau} \leq 3 \bee$. 
Furthermore, $\tightness{\Tau} = 0$ if and only if $\Tau$ is a crossing pair. \qed
\end{lemma}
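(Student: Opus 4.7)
The lower bound $\tightness{\Tau} \ge 0$ is immediate, since we are counting shared branches and shared switches. For the "if and only if" clause, the easy direction is that a crossing pair has every intersection point a crossing, so no tangencies (hence no shared branches) and no shared switches occur, giving $\tightness{\Tau} = 0$. For the reverse direction, I would go through the six kinds of points allowed at $\sigma \cap \tau$ by \cref{tight}: if $\tightness{\Tau} = 0$ there are no tangency points (they would aggregate into shared branches) and no shared switches, so I must rule out the remaining three cases ($\sigma$-switches, $\tau$-switches, divergences). Inspecting \cref{Fig:sigma-switch}, \cref{Fig:tau-switch}, and \cref{Fig:divergence}, each of these has at least one locally-shared arc (the black arm in the \switch{} diagrams), which would force at least one tangency point and hence at least one shared branch; contradiction. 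So every intersection point is a crossing.

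The main content is the upper bound $\tightness{\Tau} \le 3 \bee$. Shared branches contribute at most $2\bee$ by the remark just preceding the definition of $\Tau$-complexity (the cleanness hypothesis is exactly what rules out isolated shared branches and yields this count). For shared switches, I would use that every train track $\rho$ is trivalent at its switches, so counting branch-ends gives $3 |S(\rho)| \le 2 |B(\rho)| \le 2 \bee$, hence $|S(\rho)| \le \tfrac{2}{3}\bee$. A shared switch lies in $S(\sigma) \cap S(\tau)$, so there are at most $\tfrac{2}{3}\bee$ of them. Adding, $\tightness{\Tau} \le 2 \bee + \tfrac{2}{3} \bee = \tfrac{8}{3} \bee < 3 \bee$.

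I do not expect any serious obstacle: the lemma is essentially unpacking the definitions of tightness, cleanness, and crossing pair, together with the prior remark bounding the number of shared branches. The only point that requires a touch of care is the reverse direction of the biconditional, where one must look at \cref{tight} and confirm that three of the five non-crossing local pictures contain a shared sub-arc and therefore cannot coexist with the vanishing of the shared-branch count.
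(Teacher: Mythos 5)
Your proof is correct, and it takes the route the paper implicitly intends: the lemma carries an immediate $\qed$ with no displayed argument, signalling that the authors regard it as a routine unpacking of the definitions of tightness, cleanness, and crossing pair (plus the remark bounding shared branches at $2\bee$). Your trivalence count $3|S(\rho)| \le 2|B(\rho)| \le 2\bee$ for the shared-switch bound and the observation that any non-crossing, non-shared-switch intersection point carries a black arm (hence forces a tangency point and thus a shared branch) are both sound, and in fact you obtain the slightly sharper bound $\tightness{\Tau} \le \tfrac{8}{3}\bee$, which of course implies the stated $3\bee$.
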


We note that $\tightness{\Tau}$ does not increase when apply a move and in fact only remains constant if we apply an untwist or a split of type 6, 7, 8a, 10a, or 11a (or the horizontal mirror of 11a).

\begin{definition}
Suppose that $\Tau = (\sigma, \mu, \tau, \nu)$ is a clean pair of weighted train tracks.
The \emph{complexity} of $\Tau$ is:
\[
\complexity{\Tau} \defeq \sum_{a \in B(\sigma)} \complexity{\mu(a)} + \sum_{b \in B(\tau)} \complexity{\nu(b)}
\]
The \emph{$L^1$--shared size} and \emph{$L^\infty$--shared size} of $\Tau$ are:
\[ 
\sharedsize{\Tau}_1 \defeq \sum_{\mathclap{e \in B(\sigma \cap \tau)}} \; \complexity{\mu(e)} + \complexity{\nu(e)}
\inlineand
\sharedsize{\Tau}_\infty \defeq \max_{\mathclap{e \in B(\sigma \cap \tau)}} \complexity{\mu(e)} + \complexity{\nu(e)}
\]
respectively.
Using these we define the \emph{shared size} of $\Tau$ to be:
\[ 
\sharedsize{\Tau} \defeq \sharedsize{\Tau}_1 + 2 \cdot \tightness{\Tau} \cdot \sharedsize{\Tau}_\infty. \qedhere
\]
\end{definition}



\subsection{Shortening}

Together these three moves allow us to reduce a clean pair of weighted train tracks to a crossing pair.

\begin{definition}
Suppose that $\Tau = (\sigma, \mu, \tau, \nu)$ is a clean pair of weighted train tracks.
We define $\Lambda(\Tau)$ to be the subtrack of $\sigma \cap \tau$ consisting of the shared branches on which $\complexity{\mu(e)} + \complexity{\nu(e)}$ is maximal.
\end{definition}

We note that at every switch at most two of the incident branches are in $\Lambda(\Tau)$, hence $\Lambda(\Tau)$ is a disjoint union of intervals and loops.

\begin{definition}
Suppose that $\Tau$ is a clean pair of weighted train tracks.
Suppose that $s$ is a switch where the large branch-end is in $\Lambda(\Tau)$.
If the left (respectively right, neither) small branch-end of $s$ is in $\Lambda(s)$ then let $\length{s}$ denote the length of the train path which follows $\Lambda(\Tau)$ out of the large branch-end of $s$ until it exits out of the right (respectively left, either) small branch-end of a switch.
For example, see \cref{Fig:switch_length}.
Note that $\ell(s)$ may be $\infty$ if this train path never exits out of the right small branch-end of a switch.

For a branch $b$ which is incident to switches $s$ and $s'$ we define $\length{b}$ to be the minimum of $\length{s}$ and $\length{s'}$.

We define $\length{\Tau}$ to be the minimum of $\length{s}$ for each switch $s$ that meets $\Lambda(\Tau)$.
We define $\lambda(\Tau)$ to be the subset of branches of $\Lambda(\Tau)$ which minimise $\length{\cdot}$.
\end{definition}

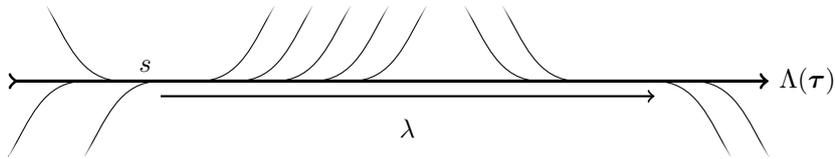
\begin{figure}[ht]
\centering
\begin{tikzpicture}[scale=2,thick]


\foreach \x in {0.75,3.5,3.75} {\draw [thin] (\x,0) to [out=180,in=-60] (\x-0.5,0.5);}
\foreach \x in {0.5, 1} {\draw [thin] (\x,0) to [out=180,in=60] (\x-0.5,-0.5);}
\foreach \x in {1.25, 1.5,1.75,2,2.25} {\draw [thin] (\x,0) to [out=0,in=240] (\x+0.5,0.5);}
\foreach \x in {4.25,4.5} {\draw [thin] (\x,0) to [out=0,in=120] (\x+0.5,-0.5);}

\draw [>->, very thick] (0,0) -- (5,0) node [right] {$\Lambda(\Tau)$};

\draw [->] (1,-0.1) to node [below=5pt] {$\lambda$} (4.25,-0.1);

\node [above] at (0.9, 0) {$s$};

\fill [white, path fading=south] (0,0.51) rectangle (5, 0.4);
\fill [white, path fading=north] (0,-0.51) rectangle (5, -0.4);

\end{tikzpicture}
\caption{A switch $s$ and the train path $\lambda$ out of it following $\Lambda(\Tau)$ (shown in bold). Hence $\ell(s) = 8$.}
\label{Fig:switch_length}
\end{figure}

We note that if $\Tau$ is a clean pair of weighted train tracks then $\ell(\Tau) = \infty$ if and only if $\Lambda(\Tau)$ is a collection of compatibly combed cycles.
Furthermore, if $\ell(\Tau) < \infty$ then $0 \leq \length{\Tau} \leq \bee$.

\begin{lemma}
Suppose that $\Tau = (\sigma, \mu, \tau, \nu)$ is a clean pair of weighted train tracks.
If $\Tau$ is not crossing then $\Tau$ either contains a splittable shared branch or an incompatibly combed train cycle (with no switches). 
Furthermore, this is achieved by any shared branch $e$ which maximises $\complexity{\mu(e)} + \complexity{\nu(e)}$. \qed
\end{lemma}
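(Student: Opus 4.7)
The plan is to fix a shared branch $e \in B(\sigma \cap \tau)$ maximising $\complexity{\mu(e)} + \complexity{\nu(e)}$ and, by case analysis on the types of its endpoints, to show that $e$ either realises an incompatibly combed train cycle (with no switches) or matches a Source in \cref{splitting} or \cref{conditional_splitting}.

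First, since $\Tau$ is not a crossing pair, $\sigma \cap \tau$ contains a non-crossing point. Any such point is a tangency, a shared switch, a $\sigma$-switch, a $\tau$-switch, or a divergence, and in every case it lies in the closure of some shared branch. Hence $B(\sigma \cap \tau)$ is non-empty, so a maximiser $e$ exists. If $e$ is a closed loop containing no switches then, by the remark following the definition of the separating move, $e$ is incompatibly combed, and the second alternative of the lemma holds.

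Otherwise each endpoint of $e$ is a shared switch, a $\sigma$-switch, a $\tau$-switch, or a divergence. Since $\Tau$ is clean, at most one of the two endpoints is a divergence. I would then enumerate the finitely many endpoint-type pairs, together with the local combinatorial data (orientation of the cusp at each switch, which small branch-end carries the shared branch, which of the neighbouring branch-ends themselves lie in $\sigma \cap \tau$), and match each configuration against a Source in \cref{splitting} or \cref{conditional_splitting}. The unconditional splits handle those endpoint configurations for which no weight inequality is required. The conditional splits require a comparison between $\mu(e)$ (or $\nu(e)$) and the weight at a neighbouring small branch-end which is itself a shared branch $f$; here maximality of $\complexity{\mu(e)} + \complexity{\nu(e)}$ supplies the needed inequality, because $f \in B(\sigma \cap \tau)$ contributes to the same sum on the track in question.

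The main obstacle is the bookkeeping of this finite enumeration. As with \cref{tight_subtracks} and \cref{standard_tight}, I expect the cleanest route is a computer-assisted enumeration of endpoint configurations: fix the two endpoint types of $e$, read off the required weight inequality from the matching Source, and then close the argument by invoking maximality once, uniformly. The remaining subtlety is to confirm that the figures really do cover every endpoint-type pair permitted by cleanness, so that the case analysis is genuinely exhaustive.
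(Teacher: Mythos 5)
Your overall plan — reduce to the switchless-loop case plus a finite match against the Source configurations — is the right shape, but there is a gap that the proposal does not close. Every Source in \cref{splitting} and \cref{conditional_splitting} draws $e$ as the \emph{large} branch-end at both of its endpoint switches; a shared branch that is the small branch-end at some endpoint switch matches none of them and hence is not ``splittable'' in the paper's sense. Your enumeration lists ``which small branch-end carries the shared branch'' among the local data, which suggests you are prepared to meet the case where $e$ is small, but you never argue that this case cannot occur, and it is not excluded by maximality. The complexity function $\complexity{\cdot}$ is logarithmic and coarse: at a shared switch with large branch $f$ and small shared branch $e$, the switch relations give $\mu(f)\ge\mu(e)$ and $\nu(f)\ge\nu(e)$, so $f$ is also a maximiser, but it is perfectly possible for $e$ to remain a maximiser (e.g.\ $\mu(e)=100$, $\mu(f)=101$, both of complexity $7$). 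So ``$e$ maximises $\complexity{\mu(e)}+\complexity{\nu(e)}$'' does not by itself force $e$ to sit in the large position, and your exhaustive match against the tables would then have an uncovered case. Closing this requires an additional argument (for instance, passing to the large branch at such a switch, or tracing out a compatibly combed cycle in $\Lambda(\Tau)$ — neither of which is spelled out in the proposal).

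Separately, the reasoning about the conditional splits is off. The conditions in \cref{conditional_splitting} are exhaustive trichotomies ($\mu(a)>\mu(b)$, $=$, or $<$, and likewise for $\nu$), so one of them always holds for a branch that matches the Source; they decide \emph{which Target} to use, not \emph{whether} the branch is splittable. In particular they compare two small branch-ends at opposite ends of $e$ (one shared, one not), not $\mu(e)$ against a neighbouring shared branch. Invoking maximality of $\complexity{\mu(e)}+\complexity{\nu(e)}$ to ``supply the needed inequality'' is therefore not doing any work here — and it could not in any case, since equality of complexities does not yield an inequality of the underlying weights. The actual role of maximality in the lemma is precisely the point in the previous paragraph (ensuring the chosen branch is in a favourable position), and the proposal places that weight on the wrong step.
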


\begin{definition}
Suppose that $\Tau = (\sigma, \mu, \tau, \nu)$ is a clean pair of weighted train tracks which is not crossing.
Choose $e$ a shared branch of $\Tau$ in $\lambda(\Tau)$.
Then:
\begin{enumerate}
\item If there is an incompatibly combed train cycle $c$ through $e$ then $\move(\Tau)$ denotes the result of separating along $c$,
\item If there is a compatibly combed train cycle $c$ through $e$ with rotation number $r \geq 1$ then $\move(\Tau)$ denotes the result of untwisting along $c$, and
\item Otherwise $\move(\Tau)$ denotes the result of splitting $e$, followed by splitting any isolated shared branches (if there are any). \qedhere
\end{enumerate}
\end{definition}

We note that $\move(\Tau)$ does not have any isolated shared branches, so it too is a clean pair of weighted train tracks.

\begin{remark}
Our combinatorial moves look at big sections of track whereas Dynnikov examines the two-neighbourhood of the largest branch \cite[Figures~22--33]{Dynnikov22}.
This is a fundamental difference between our work and his.
\end{remark}

\begin{proposition}
\label[proposition]{drop_complexity}
Suppose that $\Tau$ is a clean pair of weighted train tracks which is not crossing.
Then $\sharedsize{\move(\Tau)} \leq \sharedsize{\Tau}$.
Furthermore $\sharedsize{\move^k(\Tau)} < \sharedsize{\Tau}$ for some $k \leq 2 \bee^2$.
\end{proposition}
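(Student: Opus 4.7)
The plan is to prove the two statements separately: first show that each move weakly decreases $\sharedsize{\cdot}$, then bound the number of consecutive ``non-progressing'' moves before $\sharedsize{\cdot}$ must strictly drop.

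\textbf{Weak decrease.} I would go through the three possible types of move in turn. For \emph{separating} along a train cycle $c$, the branches of $c$ cease to be shared so they are removed from the sum defining $\sharedsize{\Tau}_1$; meanwhile $\tightness{\Tau}$ strictly drops (by the number of shared branches and shared switches on $c$), and $\sharedsize{\Tau}_\infty$ does not increase. For \emph{untwisting} along a compatibly combed cycle $c$, the underlying pair $(\sigma,\tau)$ is unchanged so $\tightness{\Tau}$ is constant; but the weights along $c$ strictly decrease (since $r \geq 1$), while weights on branches off $c$ are unchanged, so $\sharedsize{\Tau}_1$ strictly drops and $\sharedsize{\Tau}_\infty$ does not increase. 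For \emph{splitting}, I would do a case-by-case check against the figures splitting and conditional\_splitting: in each case the switch equalities force the weights of the new shared branches (if any) to be bounded by the weight at $e$, and the tightness weakly decreases. Because $e$ was chosen in $\lambda(\Tau) \subseteq \Lambda(\Tau)$ to maximise $\complexity{\mu(e)} + \complexity{\nu(e)}$, any new shared branches contribute at most as much to $\sharedsize{\cdot}_\infty$ as $e$ did, and the change to $\sharedsize{\cdot}_1$ is at worst neutral; combined with the weak decrease of $\tightness{\cdot}$, this gives $\sharedsize{\move(\Tau)} \leq \sharedsize{\Tau}$.

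\textbf{Strict decrease within $2\bee^2$ steps.} Here I would use the observation stated just before the proposition: $\tightness{\Tau}$ is monotonically non-increasing under $\move$, and stays constant only when $\move$ is an untwist or a split of type 6, 7, 8a, 10a, or 11a. Since $0 \leq \tightness{\Tau} \leq 3\bee$, tightness can strictly decrease at most $3\bee$ times, and each such drop contributes a term $2\sharedsize{\Tau}_\infty$ to the decrease of $\sharedsize{\cdot}$ (strict, unless $\sharedsize{\Tau}_\infty = 0$, in which case $\Tau$ is already crossing). Between two tightness drops, only constant-tightness moves can occur. An untwist along a compatibly combed cycle $c$ leaves $c$ with rotation number $0$, so the same cycle cannot be the site of a second consecutive untwist; as $\sigma \cap \tau$ admits at most $2\bee$ compatibly combed cycles, consecutive untwists are bounded. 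For each constant-tightness split type one checks directly from the figures that the local configuration is destroyed by the split, so each shared branch can be the site of such a split at most once before tightness drops. Thus between two consecutive tightness drops there are at most $O(\bee)$ moves, all but at most one of which strictly decreases $\sharedsize{\Tau}_1$, yielding the bound $k \leq 2\bee^2$.

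The main obstacle, and the step I expect to be genuinely laborious, is the case analysis for splitting. One must verify, in every row of \cref{splitting} and of \cref{conditional_splitting}, both that the split weakly decreases $\sharedsize{\cdot}$ (with the maximality of $e$ in $\lambda(\Tau)$ doing the real work) and, for the five constant-tightness split types, that the local configuration genuinely cannot persist through a long run of moves. A secondary, more delicate point is handling the interplay between $\sharedsize{\cdot}_1$ and the product $2\tightness{\cdot}\sharedsize{\cdot}_\infty$: a split might raise $\sharedsize{\cdot}_\infty$ only if it also drops $\tightness{\cdot}$, and I would track these two contributions together rather than separately, so that the gain from a tightness drop compensates any local increase in $\sharedsize{\cdot}_1$ or $\sharedsize{\cdot}_\infty$.
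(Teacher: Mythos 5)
Your outline of the first conclusion (weak decrease) is broadly in the right spirit. The paper organizes it differently: it first treats the case $\tightness{\move(\Tau)} < \tightness{\Tau}$, where the only possible increase of $\sharedsize{\cdot}_1$ is one new shared branch (case 11c) contributing at most $\sharedsize{\Tau}_\infty$, which is more than compensated by the drop of $2\sharedsize{\Tau}_\infty$ in the tightness term; it then observes that tightness-preserving moves induce a bijection on shared branches with non-increasing bit-sizes. Your move-by-move route lands in the same place, modulo one imprecision: it is $\sharedsize{\cdot}_1$, not $\sharedsize{\cdot}_\infty$, that can momentarily go up when tightness drops.

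The second conclusion is where you have a genuine gap. Your bound on the run of tightness-preserving moves rests on the claim that each shared branch can be the site of a constant-tightness split at most once before tightness drops, and that this gives at most $O(\bee)$ moves between tightness drops. Neither step is justified, and the first is false: for tightness-preserving splits there is a bijection between the shared branches of $\Tau$ and of $\move(\Tau)$, so the branch that was just split persists (as does $\Lambda$) and nothing prevents $\move$ from selecting it, or a branch adjacent to it, repeatedly. This is exactly why the paper introduces the length function $\ell(\Tau)$ and the subset $\lambda(\Tau) \subseteq \Lambda(\Tau)$ of branches realizing the minimum length, and then proves a strict lexicographic decrease of the triple $\bigl(\sharedsize{\Tau}_\infty, |\Lambda(\Tau)|, \ell(\Tau)\bigr)$: if $\sharedsize{\cdot}_\infty$ does not drop then $\Lambda$ cannot grow; if $|\Lambda|$ is also unchanged then the move must be a split at a branch $e \in \lambda(\Tau)$ whose incident switches both have small branch-ends inside $\Lambda(\Tau)$, and this forces $\ell(\Tau)$ to decrease. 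Since $|\Lambda(\Tau)|$ and $\ell(\Tau)$ are each $O(\bee)$, the quantity $\sharedsize{\cdot}_\infty$ must fall within $2\bee^2$ moves, and (with tightness constant) that is what forces $\sharedsize{\cdot}$ to fall. You mention $\lambda(\Tau)$ only to argue that $e$ has maximal bit-size; you never use its length-minimizing property, which is precisely what drives termination. Without $\ell(\Tau)$ or an equivalent strictly decreasing measure for tightness-preserving splits, your counting argument does not close.
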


\begin{proof}
Suppose that $\Tau' \defeq \move(\Tau)$.
If $\tightness{\Tau'} < \tightness{\Tau}$ then $\Tau'$ has at most one more shared branch than $\Tau$; furthermore, this happens only in case 11c of \cref{conditional_splitting}.
Thus
\[ \sharedsize{\Tau'}_1 \leq \sharedsize{\Tau}_1 + \sharedsize{\Tau}_\infty \]
and so $\sharedsize{\Tau'} \leq \sharedsize{\Tau} - \sharedsize{\Tau}_\infty < \sharedsize{\Tau}$.
So in this case we are done. 

For the remainder of the proof we assume that $\tightness{\move^k(\Tau)} = \tightness{\Tau}$.
Consulting \cref{splitting,conditional_splitting} we find that for tightness-preserving splits there is a bijection between the shared branches of $\Tau$ and $\Tau'$.  
Furthermore, their bit-sizes are non-increasing. 
Thus $\sharedsize{\Tau'} \leq \sharedsize{\Tau}$.
This completes the proof of the first conclusion. 

We claim that
\[
(\sharedsize{\Tau'}_\infty, |\Lambda(\Tau')|, \ell(\Tau')) 
< 
(\sharedsize{\Tau}_\infty, |\Lambda(\Tau)|, \ell(\Tau))
\]
where tuples are compared lexicographically.
If $\ell(\Tau) = \infty$ then $\Lambda(\Tau)$ is a collection of compatibly combed cycles. 
In this case $\move(\Tau)$ applies an untwist; so $\sharedsize{\move(\Tau)}_\infty < \sharedsize{\Tau}_\infty$.
Otherwise, $|\Lambda(\Tau)|$ and $\ell(\Tau)$ are both bounded by linear functions of $\bee$, we have that $\sharedsize{\move^k(\Tau)}_\infty < \sharedsize{\Tau}_\infty$ for some $k \leq 2 \bee^2$.
Therefore, since tightness is constant, when $\sharedsize{\cdot}_\infty$ reduces so does $\sharedsize{\cdot}$.
Thus the second conclusion follows from the claim.

We now prove the claim. 
First note that if $\sharedsize{\Tau}_\infty = \sharedsize{\Tau'}_\infty$ then $\Lambda(\Tau') \subseteq \Lambda(\Tau)$.
Therefore $|\Lambda(\Tau')| \leq |\Lambda(\Tau)|$.

Second, if $\sharedsize{\Tau}_\infty = \sharedsize{\Tau'}_\infty$ and $|\Lambda(\Tau')| = |\Lambda(\Tau)|$ then the move performed must be a split and so $\ell(\Tau) < \infty$.
Let $e$ be the shared branch of $\Tau$ that is split and let $s$ and $s'$ be the switches incident to $e$.
The left (or right) small branch-ends of $s$ and $s'$ must both be in $\Lambda(\Tau)$, since otherwise splitting reduces $|\Lambda(\Tau)|$.
However this means that splitting $e$ reduces $\length{\Tau}$.
\end{proof}

Suppose that $\Tau$ is not crossing. 
Let $\move_{<}(\Tau)$ denote the result of applying $\move$ sufficiently many times to reduce $\sharedsize{\Tau}$.
By \cref{drop_complexity} at most $2 \bee^2$ applications of $\move$ are needed.
Let $\update(\Tau)$, $\update^k(\Tau)$, and $\update_{<}(\Tau)$ denote the update rule applied by $\move$, $\move^k$ and $\move_{<}$ to $\Tau$ respectively.

\begin{lemma}
Suppose that $\Tau$ is a clean pair of weighted train tracks.
Suppose that $U = \update^k(\Tau)$ for some $k$.
Then $\complexity{U} = O(\sharedsize{\Tau})$.
Thus, given such a $U$, we can compute $U(\Tau)$ in $O(m \log(m) + n)$ time where $n = \complexity{\Tau}$ and $m = \sharedsize{\Tau}$. \qed
\end{lemma}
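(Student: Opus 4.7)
The plan is to prove the two claims separately, beginning with the bound on $\complexity{U}$ and then using it to deduce the running time.

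For the complexity bound, I would first classify the individual update rules: from the descriptions in Sections 5.1--5.3, a split or a separation has an update matrix whose entries lie in $\{0, \pm 1\}$, and hence has constant max-entry bit-complexity; an untwist along a cycle $c$ with rotation number $r$ has an update matrix whose entries lie in $\{0, \pm 1, \pm r\}$, and so max-entry bit-complexity $O(\log r)$. Since all matrices are $\bee \times \bee$ with $\bee$ bounded by a fixed polynomial in $|\chi(S)|$, the max-entry bit-complexity $\complexity{\cdot}_\infty$ is subadditive under multiplication up to an additive $O(\log \bee)$ per composition; iterating this bound over the $k$ individual updates composing $U$ gives
\[
\complexity{U}_\infty \;\leq\; \sum_{\textrm{untwists } i} O(\log r_i) \;+\; O(k \log \bee).
\]
By \cref{drop_complexity}, every $2\bee^2$ successive applications of $\move$ strictly decrease $\sharedsize{\cdot}$, so $k \leq O(\bee^2 \sharedsize{\Tau})$. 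Moreover, an untwist of rotation number $r$ reduces $\sharedsize{\cdot}_\infty$ by $\Omega(\log r)$ (as it zeros out the $\log r$ leading bits shared by $(\tau,\nu)$ with the cycle $c$), so $\sum_i \log r_i \leq O(\sharedsize{\Tau})$. Combining these and noting that $\complexity{U} \leq \bee^2 \complexity{U}_\infty$, we absorb the $\bee$ into the implied constant to conclude $\complexity{U} = O(\sharedsize{\Tau})$.

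For the running-time claim, I would decompose $A = I + C$ (and similarly for $B$), treating the ``identity part'' and the ``correction'' separately. Writing $\mu' = \mu + C\mu$, the first summand just copies $\mu$, which takes $O(n)$ time. For the second, note that $C$ has total bit-complexity $O(m)$ concentrated among the $O(\bee^2) = O(1)$ entries; for each non-zero entry $C_{ij}$, the product $C_{ij}\mu_j$ is computed via the fast integer multiplication of \cite[Theorem 1.1]{HarveyHoeven21}. The key accounting is to split each product $C_{ij}\mu_j$ according to whether $|C_{ij}| \leq 1$ (trivial copy, costing $O(\complexity{\mu_j})$, summing to $O(n)$ across the matrix) or $|C_{ij}| \geq 2$ (costing $O((\complexity{C_{ij}} + \complexity{\mu_j})\log(\cdots))$). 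Collecting the trivial contributions gives the $O(n)$ term, while fast multiplication against the large coefficients — whose total bit-size is $O(m)$ — yields the $O(m\log m)$ term.

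The main obstacle will be the bookkeeping in part one: the obvious bound $\complexity{AB} \leq \bee^2(\complexity{A}+\complexity{B})+O(1)$ is multiplicative in $\bee$ per composition and thus exponential in $k$, so it is crucial to work with $\complexity{\cdot}_\infty$, which is genuinely subadditive, and to convert back to $\complexity{\cdot}$ only at the very end. A secondary subtlety is verifying the precise decrease ``each untwist of rotation $r$ costs $\Omega(\log r)$ from $\sharedsize{\cdot}_\infty$'', which needs the definition of $\sharedsize{\Tau}$ to include the $\tightness{\Tau} \cdot \sharedsize{\Tau}_\infty$ term so that the arithmetic of the untwist move (subtracting $r \sum_{y \in Y} \mu(y)$) is reflected in the decrement of the shared size.
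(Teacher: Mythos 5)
The paper states this lemma with a $\qed$ and no proof, so there is no internal argument to compare against; the nearest analogue is the proof of \cref{update_interval_bound}, which writes $A$ as a product of $O(\bee^2 D)$ elementary matrices (invoking \cite[Lemma~2.3]{Riley05} for untwists) and then bounds the $L^\infty$ bit-size of such a product by its length. Your first-part argument is a valid and genuinely different route: you track $\complexity{\cdot}_\infty$ directly under matrix composition, using its subadditivity up to an additive $O(\log \bee)$, together with \cref{drop_complexity} to bound $k$ and the monotone decrease of $\sharedsize{\cdot}_\infty$ under untwisting to bound $\sum_i \log r_i$. This is sound (and arguably more elementary than the Riley-lemma decomposition), given the observation, which you do flag, that each untwist with rotation $r$ drops $\sharedsize{\cdot}_\infty$ by $\Omega(\log r)$.

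The running-time claim, however, has a genuine gap. You bound the total bit-size of the large coefficients $C_{ij}$ by $O(m)$, but the Harvey--Hoeven cost of each product $C_{ij}\mu_j$ is $O((\complexity{C_{ij}} + \complexity{\mu_j})\log(\complexity{C_{ij}}+\complexity{\mu_j}))$, and you never control $\complexity{\mu_j}$. If a branch $j$ with $\complexity{\mu_j}$ close to $n$ sits in a column of $C$ containing an entry of absolute value at least $2$, that single multiplication already costs $\Theta(n\log n)$, which exceeds $O(m\log m + n)$ when $m \ll n$. The missing ingredient is the structural fact that any column of $A$ with an entry of absolute value at least $2$ corresponds to a branch that is at some stage involved in a move, and at the time of first involvement its weight has bit-size $O(m)$: for a split this follows from the switch equalities $\mu(a),\mu(b),\mu(c),\mu(d) \le \mu(e)$ with $e$ shared; for an untwist with $r \ge 1$, each $y \in Y$ satisfies $\mu(y) \le S \le 2\min_{a\in X}\mu(a)$ because $p = 2\min_{a\in X}\mu(a) \fdiv S \ge 1$ forces $S \le 2\min_{a\in X}\mu(a)$, and the minimiser lies on a shared branch. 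Since such a branch is untouched before its first involvement, its initial weight equals this bounded value. Without establishing this, the decomposition $A = I + C$ does not yield the claimed $O(m\log m + n)$ bound.
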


This gives a (quadratic-time) algorithm to compute the intersection number of two curves as described in \cref{naive_intersection}.

\begin{algorithm}[ht!]
\caption{\textsc{NaiveIntersection}}
\label[algorithm]{naive_intersection}
\begin{algorithmic}[1]  
\Require{A clean pair of weighted train tracks $\Tau_1 = (\sigma, \mu, \tau, \nu)$.}
\Ensure{An integer.}
    \IfThen{$\sharedsize{\Tau_1} = 0$}{\Return $\pair{\mu, \nu}$} \Comment{Because crossing.}
    \State $U \gets \update_<(\Tau_1)$ \label{Line:naive_move} \Comment{An update rule that reduces shared size.}
    \State $\Tau_2 \gets U(\Tau_1)$ \label{Line:naive_apply} \Comment{$\Tau_2 = \move_{<}(\Tau_1)$.}
    \State \Return $\Call{NaiveIntersection}{\Tau_2}$ \label{Line:naive_recurse} \Comment{Recurse.}
\end{algorithmic}
\end{algorithm}

\begin{proposition}
Suppose that $\Tau_1 = (\sigma, \mu, \tau, \nu)$ is a clean pair of weighted train tracks.
Suppose that $\mu \in V(\sigma)$ and $\nu \in V(\tau)$ and that $\alpha \defeq C(\mu)$ and $\beta \defeq C(\nu)$.
\cref{naive_intersection} returns $\intersection(\alpha, \beta)$.
\end{proposition}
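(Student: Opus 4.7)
The plan is to establish two things: termination of the recursion, and the invariant that the multi-curves carried by the weighted tracks are preserved at each recursive call. Termination is immediate from \cref{drop_complexity}: each recursive call replaces $\Tau_i$ with $\Tau_{i+1} = \update_<(\Tau_i)(\Tau_i) = \move_<(\Tau_i)$, and by construction $\sharedsize{\Tau_{i+1}} < \sharedsize{\Tau_i}$. Since $\sharedsize{\cdot}$ is a non-negative integer, after finitely many steps we reach some $\Tau_n$ with $\sharedsize{\Tau_n} = 0$. At this point $(\sigma_n, \tau_n)$ has no shared branches, so by definition $\Tau_n$ is a crossing pair and the algorithm halts, returning $\pair{\mu_n, \nu_n}$.

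The key work is showing that the invariants $\mu_i \in V(\sigma_i)$, $\nu_i \in V(\tau_i)$, $C(\mu_i) = \alpha$, and $C(\nu_i) = \beta$ are preserved by each of the three moves. This reduces, by induction on $i$, to verifying the claim for a single application of $\move$. I would handle each move type separately. For splitting: by inspection of the update rules $A$ and $B$ in each case of \cref{splitting} and \cref{conditional_splitting}, the new weighting $A(\mu_i)$ satisfies the switch equalities of $\sigma_{i+1}$ whenever $\mu_i$ satisfies those of $\sigma_i$, and the multi-curve built by combing $A(\mu_i)$--many arcs along $\sigma_{i+1}$ is isotopic to that built by $\mu_i$ along $\sigma_i$; the same holds for $B$. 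For untwisting: the formula for $A(\mu_i)$ subtracts exactly the weight needed to unwind $r$ full turns of the carried multi-curve around the cycle $c$, which is an isotopy of $C(\mu_i)$, and the non-negativity of the result is guaranteed by the definition of $r$ from the floor-divisions defining $p$ and $q$. For separating: the underlying weightings are unchanged, and pushing $\sigma$ and $\tau$ off opposite sides of an incompatibly combed cycle is an ambient isotopy that moves the carried multi-curves only by isotopy. In each case, by \cref{remains_tight}, cleanness is also preserved (after the post-split cleanup in case (3) of the definition of $\move$).

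With the invariants established, $\Tau_n$ is a clean crossing pair with $\mu_n \in V(\sigma_n)$, $\nu_n \in V(\tau_n)$, $C(\mu_n) = \alpha$, and $C(\nu_n) = \beta$. Applying \cref{pair_is_intersection} gives
\[
\pair{\mu_n, \nu_n} = \iota(C(\mu_n), C(\nu_n)) = \iota(\alpha, \beta),
\]
which is precisely what the algorithm returns at its base case.

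The main obstacle is the per-case verification that splitting preserves both the linear relations defining $V(\sigma)$ and $V(\tau)$ and the isotopy class of the carried multi-curve. Because the splitting figures enumerate many local pictures (including the conditional splits where the update depends on the comparison of shared-branch weights), this step is really a finite but somewhat tedious inspection, analogous to the case analysis already carried out in the proof of \cref{tight_subtracks}. The other two moves are conceptually cleaner: untwisting is a standard power-of-a-Dehn-twist computation, and separating is a topological re-embedding that does not alter weightings at all.
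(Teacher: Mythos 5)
Your proof is correct and follows essentially the same structure as the paper's: induction on $\sharedsize{\Tau_1}$, using \cref{drop_complexity} to make the recursion well-founded and \cref{pair_is_intersection} at the base case. The paper's own proof is terser, eliding the verification that each move preserves $C(\mu_i)$, $C(\nu_i)$, and membership in $V(\sigma_i)$, $V(\tau_i)$ (this is asserted informally at the start of the section on moves but never argued in detail); you usefully make that invariant explicit.
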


\begin{proof}
We proceed by induction on $\sharedsize{\Tau_1}$.
As the base case, when $\sharedsize{\Tau_1} = 0$ we have that $\Tau_1$ is crossing and so Line~\ref{Line:base} returns $\intersection(\alpha, \beta)$ by \cref{pair_is_intersection}. 
Otherwise, by the definition of $\update_{<}$ we have that $\sharedsize{\Tau_2} < \sharedsize{\Tau_1}$.
Therefore the recursive call to \textsc{NaiveIntersection} on Line~\ref{Line:naive_recurse} is well defined and returns $\intersection(\alpha, \beta)$ by induction.
\end{proof}

\begin{remark}
Furthermore, \cref{naive_intersection} runs in $O(n^2)$ time where $n = \complexity{\Tau_1}$. 
However to obtain this bound, care must be taken when performing untwisting since these require trial division.
\end{remark}

\begin{proposition}
Suppose that $\Tau = (\sigma, \mu, \tau, \nu)$ is a clean pair of weighted train tracks.
Suppose that $\Tau' = (\sigma', \mu', \tau', \nu') \defeq \move^k(\Tau)$ for some $k$.
Then 
\[ |\sigma' \pitchfork \tau'| - |\sigma \pitchfork \tau| \leq 13 \bee^3, \]
which, in particular, is independent of $k$. 
\end{proposition}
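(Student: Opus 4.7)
The plan is to bound the number of crossings that each individual move can introduce, and then bound the number of moves that can actually introduce any.

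First I would handle the easy cases: untwisting does not alter the underlying pair $(\sigma, \tau)$ at all, only the weights, so $|\sigma \pitchfork \tau|$ is unchanged; and separating replaces $\sigma, \tau$ by parallel copies pushed off opposite sides of an incompatibly combed cycle $c$, which destroys the shared structure along $c$ without creating any transverse intersection between the new $\sigma'$ and $\tau'$. So neither operation contributes to $|\sigma' \pitchfork \tau'| - |\sigma \pitchfork \tau|$, and we may restrict attention to the contributions of splitting.

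Next I would analyze a single splitting operation, consulting \cref{splitting,conditional_splitting} together with the case breakdown already carried out in the proof of \cref{remains_tight}. Splits that preserve the topology of all complementary regions introduce no crossings; central splits merge or preserve regions without creating crossings; only splits that convert cusps into corners introduce crossings, and in each such case at most a bounded number $C = O(1)$ of new crossing points appear in a neighbourhood of the split branch. A careful enumeration of the figures gives an explicit constant.

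Now comes the heart of the argument, and also the main obstacle: bounding how many splits along the trajectory can actually create crossings. Here I would invoke the observation recorded in the proof of \cref{drop_complexity}: for a tightness-preserving split there is a bijection between the shared branches of $\Tau$ and $\Tau'$, and correspondingly a bijection of the local crossing pattern — no new crossings arise. Hence any split that creates crossings must strictly decrease $\tightness{\Tau}$. Since $0 \leq \tightness{\Tau} \leq 3\bee$ and $\tightness{\cdot}$ is non-increasing along the entire trajectory $\Tau, \move(\Tau), \move^2(\Tau), \ldots$ (including within a single $\move$, which may comprise the split of $e$ together with follow-up splits of the resulting isolated shared branches), there are at most $3\bee$ crossing-creating splits in total, independent of $k$. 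This is where the main obstacle lies: one must correctly account for all the splits inside a single $\move$ (since cleanup of isolated shared branches can itself drop tightness), and verify from the figures that each tightness drop by one is caused by a single split contributing only a bounded number of crossings.

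Finally, I would combine the three steps: at most $3\bee$ crossing-creating splits, each adding at most $O(\bee^2)$ new crossing points (the looser bound accounts for the case where a split of an isolated shared branch of high weight can redistribute intersections along up to $O(\bee)$ branches of the other track, each contributing up to $O(\bee)$ crossings from the surrounding geometry). Multiplying out and reading off the constants from the enumeration yields the claimed bound $|\sigma' \pitchfork \tau'| - |\sigma \pitchfork \tau| \leq 13\bee^3$, independent of $k$.
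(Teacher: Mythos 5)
Your proposal has a fundamental gap that swallows the main content of the paper's argument. You claim that ``any split that creates crossings must strictly decrease $\tightness{\Tau}$,'' but this is false: a split of type~7 in \cref{splitting} introduces a crossing point \emph{and} leaves $\tightness{\Tau}$ unchanged. Consequently your bound of at most $3\bee$ crossing-creating splits fails, and the entire middle of the paper's proof --- which is devoted precisely to bounding the number of type-7 splits --- is missing from your argument. The paper observes that if a tightness-preserving subsequence of moves were to introduce more than $4\bee^2$ crossings, then some pair of branch-ends would have to perform at least five type-7 splits together, forcing the corresponding component of $\partial(S - n(\sigma\cap\tau))$ to be an incompatibly combed train cycle; but $\move$ would then have separated that cycle rather than split it, a contradiction. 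This yields $4\bee^2$ type-7 splits per tightness-preserving subsequence, $3\bee$ such subsequences, hence $12\bee^3$ crossings from type-7 splits, plus $3\bee$ from the tightness-dropping moves, giving $13\bee^3$. None of this structure appears in your proposal.

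A secondary error: you also claim that separating creates no transverse intersections between $\sigma'$ and $\tau'$. This is incorrect --- \cref{Fig:combed_separated} shows the new crossing points created at the divergence points along $c$ --- though it happens to be harmless, because the paper's observation is that separations (and tightness-dropping splits) reduce $\tightness{\Tau}$ by at least the number of crossings they introduce, so those contributions are absorbed into the $3\bee$ term. Finally, your closing heuristic that each crossing-creating split can add $O(\bee^2)$ crossings is not what happens; each individual split or separation introduces only a bounded number of crossings (visible in the figures), and the $\bee^2$ factor in the paper comes from counting type-7 splits, not from any single split being expensive.
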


\begin{proof}
A new crossing point between $\sigma'$ and $\tau'$ is only created when a separation or split of type 2, 4, 5, 7, 8c, 10c or 11c is performed.
However, all of these moves except a split of type 7 reduce $\tightness{\Tau}$ by at least the number of crossing points that they introduce.
Thus the total number of crossing points introduced by separations or splits of type 2, 4, 5, 8c, 10c or 11c is at most $3 \bee$.

To bound the number of crossing points introduced by splits of type 7, suppose that $\tightness{\Tau'} = \tightness{\Tau}$.
Now if $|\sigma' \pitchfork \tau'| - |\sigma \pitchfork \tau| > 4 \bee^2$ then there a branch-end of $\sigma$ and a branch-end of $\tau$ which perform least five splits of type 7 together.
Without loss of generality, we may assume that the $\sigma$ branch-end is part of a left $\sigma$--switch and $\tau$ branch-end is part of a right $\tau$--switch.

Noting that we are only performing tightness-preserving moves, since this pair of branch-ends perform a split of type 7 together they must connect into the same smooth component of $\partial (S - n(\sigma \cap \tau))$.
Furthermore, since they perform at least two splits of type 7 together this component must actually be a circle.
Finally, since they perform at least five splits of type 7 
this component, when considered as a train cycle $c$ in $\sigma \cap \tau$, cannot meet any shared switches, right $\sigma$--switches, or left $\tau$--switches.
However this means that $c$ is an incompatibly combed train cycle.  
Thus, if any of the branches along it were ever considered for splitting then the move would separate the entire cycle instead.
This contradicts the assumption that $\tightness{\Tau'} = \tightness{\Tau}$.

Hence a sequence of tightness-preserving moves contains at most $4 \bee^2$ splits of type 7.
However any sequence of moves contains at most $3 \bee$ sub-sequences of tightness-preserving moves.
Therefore the number of crossing points introduced by splits of type 7 is at most $12 \bee^3$.

Hence $|\sigma' \pitchfork \tau'| - |\sigma \pitchfork \tau| \leq 12 \bee^3 + 3 \bee \leq 13 \bee^3$ as required.
\end{proof}

\begin{corollary}
Since we start with a clean pair of weighted standard train tracks and these have at most $\bee$ crossing points, every pair of train tracks that we encounter from now on will have at most $14 \bee^3 = O(1)$ crossing points. \qed
\end{corollary}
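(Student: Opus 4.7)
The plan is simply to combine the preceding proposition with the explicit crossing count for the starting configuration. First I would inspect the standard pairs enumerated in \cref{Fig:standard_annulus} and \cref{Fig:standard_pants} and observe that each annulus and each pair of pants contributes at most a bounded number of crossing points; summing over all pieces of the decomposition gives an initial crossing count of at most $\bee$. Cleaning the pair (removing isolated shared branches by splitting) does not introduce crossings beyond what the preceding proposition already accounts for, so we may take the clean weighted standard pair $\Tau$ as our starting point with $|\sigma \pitchfork \tau| \leq \bee$.

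Next I would invoke the preceding proposition: for any $\Tau' = \move^k(\Tau)$ we have the uniform bound
\[
|\sigma' \pitchfork \tau'| - |\sigma \pitchfork \tau| \leq 13 \bee^3.
\]
Adding this to the starting bound yields
\[
|\sigma' \pitchfork \tau'| \leq \bee + 13 \bee^3 \leq 14 \bee^3,
\]
where the last inequality uses $\bee \geq 1$. Since $\bee = 6(3g - 3 + b)$ depends only on the topology of $S$, the bound $14 \bee^3$ is $O(1)$.

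The only subtlety to watch for is that the preceding proposition was stated for sequences of $\move$ applications, whereas ``every pair of train tracks that we encounter'' should be interpreted as exactly the pairs produced by iterating $\move$ (and hence $\move_<$ and $\update$) from the starting standard pair. Since all of our algorithms only modify $\Tau$ via these three moves, this interpretation is forced, and no additional argument is needed. There is no real obstacle here; the content of the corollary is entirely in the proposition, and the corollary is just the bookkeeping step that adds the initial crossing count $\bee$ to the growth bound $13 \bee^3$.
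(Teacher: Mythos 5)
Your proposal is correct and matches the paper's intent exactly: the corollary is marked with $\qed$ precisely because it is the immediate bookkeeping step of adding the initial crossing count $\bee$ to the uniform growth bound $13\bee^3$ from the preceding proposition, using $\bee + 13\bee^3 \leq 14\bee^3$ (valid since $\bee \geq 1$). Nothing more is needed.
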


\section{Coarse intersection}

In this section, we show how to compute $\move^k(\Tau)$ in quasi-linear time.
This is inspired by the ``half GCD'' algorithm (HGCD-D) of M\"{o}ller~\cite{Moller08} and the similar observation that the amount of progress we make is actually proportional to the number of bits we need to look at.
In particular, \cref{exp_shorten} should be compared against \cite[Figure~6]{Moller08} and \cref{fast_intersection} should be compared against \cite[Figure~2]{Moller08}.

\subsection{Intervals}

To do this we approximate integers using \emph{(half-open) intervals} $[p, q)$.
There is a partial ordering on these interval where $[p, q) < [p', q')$ if and only if $q \leq p'$.
In addition to standard interval arithmetic, for intervals $J < I$ we also define $I \fdiv J$ to denote the largest integer $n$ such that $n J < I$.
For convenience, we also define $\{x\} \defeq [x, x+1)$.

\begin{definition}
Suppose that $I = [p, q)$ is an interval.
We define:
\begin{itemize}
\item its \emph{complexity} to be $\complexity{I} \defeq \complexity{p} + \complexity{q}$,
\item its \emph{complexity bound} to be $\complexitybnd{I} \defeq \complexity{q - 1}$, and
\item its \emph{uncertainty} to be $\uncertainty{I} \defeq \complexity{q - p}$. \qedhere
\end{itemize}
\end{definition}

In divide-and-conquer algorithms we must ``split the work''.
Inspired by the half-GCD algorithm we do this by taking the high order bits of a number, or rather of the intervals that approximate it.

\begin{definition}
Given an interval $I = [p, q)$ and a non-negative integer $k$ we define 
\[ 
\shift(I, k) \defeq [p \fdiv 2^k, q \cdiv 2^k). \qedhere
\]
\end{definition}

Shifting is an integral version of Thurston's notion of projectivisation of measured laminations; 
see~\cite[Page~vi]{FLP12} for a discussion. 

\subsection{Interval-weighted train tracks}

We recreate the machinery of \cref{Sub:train_track_pairs} using intervals.

\begin{definition}
Suppose that $\tau$ is a train track.
An \emph{interval-weighting} is a function $u \from B(\tau) \to \II \ZZ$ which assigns an interval to each branch.
We refer to the pair $(\tau, u)$ as an \emph{interval-weighted train track}.

We write a \emph{pair of interval-weighted train tracks} as $T = (\sigma, u, \tau, v)$.
Again we say that such a pair is tight / crossing / clean if the underlying pair of train tracks $(\sigma, \tau)$ are.
\end{definition}

Given a pair of weighted train tracks $\Tau = (\sigma, \mu, \tau, \nu)$, let $\{\Tau\} \defeq (\sigma, u, \tau, v)$ denote the pair of interval-weighted train tracks where 
\[ u(a) \defeq \{\mu(a)\} \inlineand v(b) \defeq \{\nu(b)\}. \]

The \emph{complexity} of a pair of interval-weighted train tracks $T = (\sigma, u, \tau, v)$ is 
\[ \complexity{T} \defeq \sum_{a \in B(\sigma)} \complexity{u(a)} + \sum_{b \in B(\tau)} \complexity{v(b)} \]
The \emph{$L^1$--shared size} and \emph{$L^\infty$--shared size} of $T$ are:
\[ 
\sharedsizebnd{T}_1 \defeq \sum_{\mathclap{e \in B(\sigma \cap \tau)}} \; \complexitybnd{u(e)} + \complexitybnd{v(e)}
\inlineand
\sharedsizebnd{T}_\infty \defeq \max_{\mathclap{e \in B(\sigma \cap \tau)}} \complexitybnd{u(e)} + \complexitybnd{v(e)}
\]
respectively.
Using these we define the \emph{shared size} of $T$ to be:
\[ 
\sharedsizebnd{T} \defeq \sharedsizebnd{T}_1 + 2 \cdot \tightness{T} \cdot \sharedsizebnd{T}_\infty. \qedhere
\]

Finally, define the \emph{certainty} of $T$ to be
\[ \omega(T) \defeq \complexitybnd{T}_\infty - \epsilon_\infty(T) \]
where
\begin{align*}
\complexitybnd{T}_\infty &\defeq \max_{e \in B(\sigma \cap \tau)} \max(\complexitybnd{u(e)}, \complexitybnd{v(e)}) \\
\epsilon_\infty(T) &\defeq \max_{e \in B(\sigma \cap \tau)} \max(\uncertainty{u(e)}, \uncertainty{v(e)}).
\end{align*}
This is a useful lower bound on the number of leading bits that are ``determined'' by their approximating intervals.

\subsection{Coarse moves}

Again there are coarse analogous of splitting, untwisting and separating for a tight pair of interval-weighted train tracks $T$.

However, when attempting to perform a conditional splitting a shared branch $e$ of $T$, it is possible that none of the conditions listed in \cref{conditional_splitting} are met.
This is due to the branch weight intervals being incomparable.
In which case we say that $e$ is \emph{not coarsely splittable}.
Some care is also needed when attempting to untwist since the rotation number $r$ that we use cannot be more than $2^{\omega(T)}$ to ensure that precision remains in our answer.

We define $\Lambda(T)$ to be the set of shared branches $e$ which maximise $\complexitybnd{u(e)} + \complexitybnd{v(e)}$ and $\lambda(T)$ to be the subset of these which minimise $\length{e}$.

\begin{definition}
Suppose that $T = (\sigma, u, \tau, v)$ is a clean pair of interval-weighted train tracks which is not crossing, that is $\sharedsizebnd{T} > 0$.
Choose $e$ a shared branch of $T$ in $\lambda(T)$.
\begin{enumerate}
\item If there is an incompatibly combed train cycle $c$ through $e$ then $\move(T)$ denotes the result of separating along $c$,
\item If there is a compatibly combed train cycle $c$ through $e$ with rotation number $r \geq 1$ then:
\begin{enumerate}
    \item if $\complexity{r} \leq \omega(T)$ then $\move(T)$ denotes the result of untwisting along $c$, otherwise
    \item $\move(T) \defeq \perp$, that is, $\move(T)$ is not defined,
\end{enumerate}
\item otherwise:
\begin{enumerate}
    \item if $e$ is coarsely splittable then $\move(T)$ denotes the result of splitting $e$, followed by splitting any isolated shared branches (if there are any), otherwise
    \item $\move(T) \defeq \perp$. \qedhere
\end{enumerate}
\end{enumerate}
\end{definition}

Similarly, let $\move_{<}(T)$ be the result of applying $\move$ until either $\sharedsizebnd{T}$ reduces or $\move(T)$ is not defined.
Again, following \cref{drop_complexity}, this occurs within $2 \bee^2$ applications of $\move$.
Let $\update(T)$, $\update^k(T)$ and $\update_{<}(T)$ denote the update rule applied by $\move$, $\move^k$ and $\move_{<}$ to $T$ respectively.
Note that these are just the identity update rule if $\move(T)$ is not defined.

\begin{lemma}
\label[lemma]{uncertainty_change}
Suppose that $T$ is a clean pair of interval-weighted train tracks.
Suppose that $U = \update^k(T)$ for some $k$.
Then 
\[
\epsilon_\infty(U(T)) - \epsilon_\infty(T) \leq \complexity{U}. \inlineQED
\]
\end{lemma}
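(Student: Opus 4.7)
The plan is to argue directly from the structure of the composed update rule, rather than by induction on $k$. Write $U = (\sigma', \tau', A, B)$, where $A$ and $B$ are the composed linear maps on $\sigma$- and $\tau$-weightings, respectively. By construction, $A$ and $B$ are non-negative integer matrices, since the linear transformations associated to splittings, untwistings, and separations each preserve non-negativity of weightings.

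For a shared branch $e$ of the output pair $\sigma' \cap \tau'$, the new weight $u'(e)$ is a linear combination $\sum_b A_{eb}\, u(b)$ over branches $b$ of $\sigma$. Non-negativity of the coefficients gives the interval-arithmetic estimate $\text{width}(u'(e)) = \sum_b A_{eb}\cdot \text{width}(u(b)) \leq \left(\sum_b A_{eb}\right)\cdot \max_b \text{width}(u(b))$. Taking $\complexity{\cdot}$ of both sides and using $\complexity{x y} \leq \complexity{x} + \complexity{y}$ together with $\complexity{\sum_b A_{eb}} \leq \sum_b \complexity{A_{eb}} \leq \complexity{A}$, we obtain the pointwise bound $\uncertainty{u'(e)} \leq \max_b \uncertainty{u(b)} + \complexity{A}$, where the inner max ranges over branches $b$ with $A_{eb} \neq 0$. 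An analogous bound holds on the $\tau$-side with $B$ in place of $A$.

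To finish, one takes the max over the shared branches of $\sigma' \cap \tau'$, combines the $u$- and $v$- contributions, and shows that the inner maxes are themselves bounded by $\epsilon_\infty(T)$. This yields $\epsilon_\infty(U(T)) \leq \epsilon_\infty(T) + \complexity{A} + \complexity{B} = \epsilon_\infty(T) + \complexity{U}$, as required.

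The main obstacle is the final step: reconciling $\max_b \uncertainty{u(b)}$, taken over \emph{all} $\sigma$-branches appearing in the expansion, with $\epsilon_\infty(T)$, which is taken only over shared branches. This requires a structural analysis of the support of each column of $A$ and $B$, verifying that any branch $b$ with $A_{eb} \neq 0$ for a new shared branch $e$ has $\uncertainty{u(b)}$ dominated by the uncertainty of some shared branch of $T$. For splits and separations this is immediate from the locality of the move (the transformation only touches branches adjacent to the shared branch being split or the cycle being separated). For untwists the analysis is more delicate because the coefficient entries are as large as the rotation number $r$; here the precision condition $\complexity{r} \leq \omega(T)$ built into the definition of coarse $\move$ is precisely what keeps the resulting growth at most $\complexity{A}$ above $\epsilon_\infty(T)$.
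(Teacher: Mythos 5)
The paper marks this lemma with $\inlineQED$, i.e.\ it is asserted without proof as following directly from the definitions, so there is no paper argument to compare against. Your proposal takes a reasonable route but has one factual error and one genuine gap.

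The factual error: the matrices $A$ and $B$ are \emph{not} non-negative. For an untwist the update rule is $A(\mu)(a) = \mu(a) - r\sum_{y\in Y}\mu(y)$ for $a\in X$, so $A$ has entries $-r$. The fact that $A$ preserves the cone $V(\sigma)$ does not force its entries to be non-negative. This is cosmetic for your argument, because interval widths add under subtraction as well: the correct statement is $\mathrm{width}(u'(e)) = \sum_b |A_{eb}|\cdot\mathrm{width}(u(b))$, and $\complexity{|A_{eb}|} = \complexity{A_{eb}}$, so the row-sum bound $\sum_b\complexity{A_{eb}} \le \complexity{A}$ still applies. You should fix the phrasing.

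The genuine gap is the one you yourself flag at the end: your row-sum argument produces the bound $\uncertainty{u'(e)} \le \max_b\uncertainty{u(b)} + \complexity{A}$ where the inner maximum runs over \emph{all} $\sigma$--branches $b$ in the support of row $e$, whereas $\epsilon_\infty(T)$ is by definition a maximum only over the \emph{shared} branches $B(\sigma\cap\tau)$. Nothing in your argument shows that a pure $\sigma$--branch contributing to $u'(e)$ has its uncertainty dominated by that of some shared branch. Your suggested resolution is not yet convincing: ``locality'' for splits and separations gives you branches \emph{adjacent} to a shared branch, which is not the same as branches \emph{containing} one; and the precision condition $\complexity{r}\le\omega(T)$ has no bearing here, since the entry $-r$ already contributes $\complexity{r}$ to $\complexity{A}$ regardless of that condition (the precision condition protects $\omega$ from going negative in \cref{exp_shorten_correctness}, not this lemma). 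To close the gap you would need to show, by tracking the intermediate pairs $\move^i(T)$, that every branch entering the support of a row indexed by a shared branch of $U(T)$ had, at the time it entered, uncertainty bounded by the running $\epsilon_\infty$ plus the complexity of the update accumulated so far. Until that induction is actually carried out, the proposal is incomplete.
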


Let $\cee \defeq 16 \bee^4$.

\begin{proposition}
\label[proposition]{update_interval_bound}
Suppose that $T$ is a clean pair of interval-weighted train tracks.
Suppose that $U = \update^k(T)$ for some $k$ and that $\sharedsizebnd{U(T)} < \sharedsizebnd{T}$.
Then $\complexity{U} \leq \cee \cdot (\sharedsizebnd{T} - \sharedsizebnd{U(T)})$.
\end{proposition}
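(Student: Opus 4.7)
The plan is to decompose $U = U_k \circ U_{k-1} \circ \cdots \circ U_1$ as the composition of update rules for the individual $\move$ applications (with $T_0 = T$ and $T_i = U_i(T_{i-1})$), classify the $U_i$ by move type, and bound $\complexity{U_i}$ in each case. A separation contributes an identity on the matrix side so has $\complexity{U_i} = O(1)$. A split uses matrices with entries in $\{0, \pm 1\}$, so again $\complexity{U_i} = O(\bee^2)$. An untwist with rotation number $r_i$ uses matrices of the form $I - r_i E$ where $E$ has entries bounded by branch-end sums, giving $\complexity{U_i} = O(\bee^2 \log r_i + \bee^2)$. A standard induction (parallel to the argument underlying \cref{many_matrix_multiply}) then yields $\complexity{U} \leq O(\sum_i \complexity{U_i})$ for the specific matrices produced here, since each step only modifies a bounded number of rows/columns of the cumulative product.

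First I would bound the number of moves. By the interval analog of \cref{drop_complexity} — whose proof transcribes directly from the integer case, once one observes that when $\move(T_i) = \perp$ the update rule is the identity and contributes nothing — $\sharedsizebnd{T_i}$ is non-increasing and strictly decreases within every run of $2\bee^2$ consecutive defined moves. Since $\sharedsizebnd{\cdot}$ is a non-negative integer, this gives $k \leq 2 \bee^2 \cdot (\sharedsizebnd{T} - \sharedsizebnd{U(T)})$, so the total contribution of separations and splits is at most $O(\bee^4) \cdot (\sharedsizebnd{T} - \sharedsizebnd{U(T)})$.

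Next I would bound $\sum_{\text{untwists}} \log r_i$. Each untwist is performed along a cycle $c$ containing a shared branch $e \in \lambda(T_{i-1})$. The formula $u'(a) = u(a) - r_i \sum_{y \in Y} v(y)$ for $a \in X$, together with the defining property of the rotation number (which forces $r_i \sum_Y v$ to lie within a factor of two of $\min_X u$), drops $\complexitybnd{u'(e)}$ by at least $\log r_i - O(1)$; consequently $\sharedsizebnd{T_{i-1}}_1 - \sharedsizebnd{T_i}_1 \geq \log r_i - O(1)$. Since non-untwist moves leave $\sharedsizebnd{\cdot}_1$ non-increasing (splits only shrink or delete shared branches, separations delete them), telescoping yields $\sum_i \log r_i = O(\bee)(\sharedsizebnd{T} - \sharedsizebnd{U(T)})$. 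Hence the untwist contribution to $\complexity{U}$ is also $O(\bee^3) \cdot (\sharedsizebnd{T} - \sharedsizebnd{U(T)})$. Adding the separation/split and untwist contributions and choosing constants, $\cee = 16\bee^4$ suffices.

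The main obstacle is the third step: checking carefully that the interval analog of the untwist formula genuinely reduces $\complexitybnd{u'(e)}$ by $\log r_i - O(1)$. This is precisely where the condition $\complexity{r_i} \leq \omega(T_i)$ in the interval definition of $\move$ is used — it prevents the $r_i \sum_Y v$ term from eating into the uncertainty and so guarantees that the drop is realised in $\complexitybnd{\cdot}$ rather than merely in $\uncertainty{\cdot}$. A secondary (more technical) subtlety is the bound $\complexity{U} \leq O(\sum_i \complexity{U_i})$ for the specific matrices that arise, which requires verifying that successive matrix products do not cause entry blow-up beyond the additive sum of the individual complexities.
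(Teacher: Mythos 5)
Your strategy---decompose $U$ into the individual update rules $U_i$, classify by move type, and bound the contribution of each class via the drop in $\sharedsizebnd{\cdot}$---is the same as the paper's. You also correctly identify the two ingredients of the count: the number of moves is $O(\bee^2 D)$ where $D \defeq \sharedsizebnd{T} - \sharedsizebnd{U(T)}$, and the twist exponents $r_i$ satisfy $\sum_i \complexity{r_i} = O(D)$ because each untwist drops $\sharedsizebnd{\cdot}$ by at least $\complexity{r_i}$.

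However, the central step ``$\complexity{U} \leq O(\sum_i \complexity{U_i})$'' is neither justified nor, in general, true for the matrices at hand. The issue is that $\complexity{A}$ sums bit-sizes over all $\bee^2$ entries, whereas what composition controls is the $L^\infty$ bit-size: after a single twist matrix $A_i = E_{k\ell}^{p}$ with $\complexity{p}$ large (so $\complexity{A_i} \approx \bee^2 + \complexity{p}$), the cumulative product can have \emph{every} entry of bit-size $\approx \complexity{p}$, giving $\complexity{A} \approx \bee^2 \complexity{p}$, which is $\bee^2$ times larger than $\complexity{A_i}$. Your observation that ``each step only modifies a bounded number of rows/columns'' does not prevent this blow-up, and your appeal to ``a standard induction parallel to \cref{many_matrix_multiply}'' begs the question: that proposition itself rests on first decomposing the product into elementary matrices. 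The paper's proof supplies exactly this missing content. It exhibits $A$ as a product of at most $8\bee^2 D$ elementary matrices (plus projections): splits and separations give the identity, a projection, or one elementary matrix each, and an untwist of order $p_i$ is rewritten, after $O(\bee)$ cheap row operations, as $E_{k\ell}^{p_i}$ and then expanded into $O(\complexity{p_i})$ elementary matrices via~\cite[Lemma~2.3]{Riley05}. From there the clean fact that a product of $N$ elementary matrices has $L^\infty$ bit-size at most $N$ gives $\complexity{A} \leq 8\bee^2 D \cdot \bee^2$, and doubling for $B$ yields $\cee D$. (A minor slip: the untwist formula should read $u'(a) = u(a) - r_i \sum_{y \in Y} u(y)$, not $v(y)$.)
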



\begin{proof}
Suppose that $U = (\sigma', \tau', A, B)$ and that $D \defeq \sharedsizebnd{T} - \sharedsizebnd{U(T)}$.  
We show that $A$ can be decomposed as a product of at most $8 \bee^2 D$ elementary matrices (and some number of projection matrices).
The $L_\infty$ bit-size of $A$ is then bounded above by $8 \bee^2 D$.
It follows that $\complexity{A} \leq 8 \bee^2 D \cdot \bee^2$.  
Since the same holds for $B$, we obtain the desired bound on $\complexity{U}$.

Consider
\[ 
U_i = (\sigma_i, \tau_i, A_i, B_i) \defeq \update(\move^{i-1}(T))
\]
Thus $U = U_k \circ \cdots \circ U_1$ and $A = A_k \cdot \cdots A_2 \cdot A_1$.
Each $U_i$ corresponds to either a split, a separation, or an untwisting.

First, suppose that $U_i$ corresponds to a split.
Then $A_i$ is:
\begin{itemize}
\item the identity matrix if $\sigma_i$ is not split by $U_i$,
\item a projection matrix if $U_i$ splits $\sigma_i$ centrally, or 
\item an elementary matrix if $U_i$ splits $\sigma_i$ non-centrally.
\end{itemize}
By \cref{drop_complexity} there are at most $2 \bee^2 (D + 1)$ such splits.
Therefore splits contribute at most $4 \bee^2 D$ elementary matrices to the decomposition of $A$.

Second, suppose that $U_i$ is an untwisting of order $p_i \neq 0$. 
Then, perhaps after conjugation by a permutation matrix, $A_i$ has the following form:
\[ 
\left(\begin{matrix}
\Id_x & -p_i \cdot 1_{x \times y} & 0 \\
0 & \Id_y & 0 \\
0 & 0 & \Id
\end{matrix}\right) 
\]
Multiplying by at most $2\bee$ elementary matrices transforms the above into $E_{k\ell}^{p_i}$, where $E_{k\ell}$ is an elementary matrix.  
Applying~\cite[Lemma~2.3]{Riley05} we may write $E_{k\ell}^{p_i}$ as a product of at most $27\complexity{p_i}$ elementary matrices,
and so $A_i$ as a product of at most $27 \complexity{p_i} + 2 \bee \leq (27 + 2 \bee) \complexity{p_i}$ elementary matrices.
Furthermore, such an untwist reduces $\sharedsizebnd{T}$ by at least $\complexity{p_i}$.
Therefore untwists contribute at most $(27 + 2 \bee) D \leq 4 \bee^2 D$ elementary matrices to the decomposition of $A$.

Finally, suppose that $U_i$ is a separation.
Then $A_i$ is the identity matrix.
Therefore separations contribute zero elementary matrices to the decomposition of $A$.
\end{proof}

Let $\dee \defeq \cee + 1$.

\begin{corollary}
\label[corollary]{omega_change}
Suppose that $T$ is a clean pair of interval-weighted train tracks.
Suppose that $U = \update^k(T)$ for some $k$ and that $\sharedsizebnd{U(T)} < \sharedsizebnd{T}$.
Then 
\[
\omega(T) - \omega(U(T)) \leq \dee \cdot (\sharedsizebnd{T} - \sharedsizebnd{U(T)}). \inlineQED
\]
\end{corollary}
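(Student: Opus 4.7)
The plan is to decompose the certainty difference into two independent pieces,
\[
\omega(T) - \omega(U(T)) = \bigl(\complexitybnd{T}_\infty - \complexitybnd{U(T)}_\infty\bigr) + \bigl(\epsilon_\infty(U(T)) - \epsilon_\infty(T)\bigr),
\]
and to control each summand separately. Let $D \defeq \sharedsizebnd{T} - \sharedsizebnd{U(T)}$; by hypothesis $D \geq 1$.

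First I would handle the uncertainty summand. \Cref{uncertainty_change} gives $\epsilon_\infty(U(T)) - \epsilon_\infty(T) \leq \complexity{U}$, and since $\sharedsizebnd{U(T)} < \sharedsizebnd{T}$ we may apply \cref{update_interval_bound} to conclude $\complexity{U} \leq \cee \cdot D$. So this summand contributes at most $\cee D$.

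Second I would argue that $\complexitybnd{T}_\infty - \complexitybnd{U(T)}_\infty \leq D$. The starting point is the easy bound $\complexitybnd{T}_\infty \leq \sharedsizebnd{T}_\infty$, which follows from $\max(a,b) \leq a+b$. I would then split on whether $U(T)$ is crossing. In the crossing case $\sharedsizebnd{U(T)} = 0$ and $\complexitybnd{U(T)}_\infty = 0$, so the claim reduces to $\complexitybnd{T}_\infty \leq \sharedsizebnd{T}_1 \leq \sharedsizebnd{T} = D$, which is immediate. In the non-crossing case $\tightness{U(T)} \geq 1$, and using the bound $|B(\sigma' \cap \tau')| \leq 2\bee$ from \cref{Sub:train_track_pairs} together with $\tightness{\cdot} \leq 3 \bee$ gives $\sharedsizebnd{U(T)} \leq (2\bee + 2\tightness{U(T)}) \sharedsizebnd{U(T)}_\infty \leq 8 \bee \cdot \sharedsizebnd{U(T)}_\infty$, hence $\complexitybnd{U(T)}_\infty \geq \sharedsizebnd{U(T)}_\infty / 2 \geq \sharedsizebnd{U(T)}/(16 \bee)$; combined with $\complexitybnd{T}_\infty \leq \sharedsizebnd{T}/2$ (from $\tightness{T} \geq 1$) this yields the inequality after direct comparison, with the polynomial slack in $\dee = \cee + 1 = 16 \bee^4 + 1$ absorbing any stray constant factors.

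Summing the two bounds gives
\[
\omega(T) - \omega(U(T)) \leq D + \cee D = (\cee + 1) D = \dee \cdot D,
\]
as required. The main obstacle is the complexity summand: $\complexitybnd{\cdot}_\infty$ is an $L^\infty$ maximum while $\sharedsizebnd{\cdot}$ mixes an $L^1$ sum with a tightness-scaled $L^\infty$ term, so the ``drop in the maximum is controlled by the drop in the (weighted) sum'' statement is not purely formal and requires the case split above together with the structural bounds on shared branches and tightness.
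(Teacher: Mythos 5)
Your decomposition $\omega(T) - \omega(U(T)) = \bigl(\complexitybnd{T}_\infty - \complexitybnd{U(T)}_\infty\bigr) + \bigl(\epsilon_\infty(U(T)) - \epsilon_\infty(T)\bigr)$ is the right starting point, and your handling of the uncertainty summand via \cref{uncertainty_change} and \cref{update_interval_bound} is correct: it contributes at most $\cee D$. The crossing case of the complexity-bound summand is also fine. The choice $\dee = \cee + 1$ tells you the target for the remaining term is exactly $D$.

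However, the non-crossing case has a genuine gap. You derive the \emph{absolute} estimates $\complexitybnd{T}_\infty \leq \sharedsizebnd{T}/2$ and $\complexitybnd{U(T)}_\infty \geq \sharedsizebnd{U(T)}/(16\bee)$, and then subtract to get
\[
\complexitybnd{T}_\infty - \complexitybnd{U(T)}_\infty \;\leq\; \tfrac{1}{2}\sharedsizebnd{T} - \tfrac{1}{16\bee}\sharedsizebnd{U(T)}.
\]
This is not of the form (constant)$\cdot D$. Take $\sharedsizebnd{T} = N$ and $\sharedsizebnd{U(T)} = N-1$, so $D = 1$: the right-hand side is about $N(\tfrac{1}{2} - \tfrac{1}{16\bee})$, which grows without bound in $N$, whereas $\dee \cdot D = \dee$ is a fixed constant. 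No amount of ``polynomial slack'' in a constant like $\dee$ can absorb a term that scales with $\sharedsizebnd{T}$ rather than with $D$. The issue is that subtracting two independent absolute bounds throws away all the cancellation; you need a \emph{relative} estimate that ties the drop in the $L^\infty$ complexity bound to the drop in the shared size. The missing ingredient is that the moves change branch weights monotonically downward (with a suitable identification of shared branches before and after), so the drop in $\max_e \max(\complexitybnd{u(e)},\complexitybnd{v(e)})$ is controlled by the drop in $\sum_e \bigl(\complexitybnd{u(e)} + \complexitybnd{v(e)}\bigr) = \sharedsizebnd{\cdot}_1$, and hence by $D$; this is what gives $\complexitybnd{T}_\infty - \complexitybnd{U(T)}_\infty \leq D$ directly, without the case split on $\tightness{U(T)}$.
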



\subsection{Coarsening}

Just like how we can build coarser intervals, we can build a coarser pairs of interval-weighted train tracks.

\begin{definition}
Suppose that $T = (\sigma, u, \tau, v)$ is a clean pair of interval-weighted train tracks.
Then $\trunc(T, k) \defeq (\sigma, u', \tau, v')$ is defined to be the clean pair of interval-weighted train tracks where 
\[
u'(a) \defeq \shift(u(a), d) \inlineand v'(b) \defeq \shift(v(b), d)
\]
where $d \defeq \complexitybnd{T}_\infty - k$.
\end{definition}

\begin{remark}
Suppose that $T$ is a clean pair of interval-weighted train tracks.
Suppose that $T' = \trunc(T, k)$ for some $k < \omega(T)$.
Then $\omega(T') = k$.
\end{remark}

\begin{proposition}
\label[proposition]{coarse_guide}
Suppose that $T$ is a clean pair of interval-weighted train tracks and $T' = \trunc(T, k)$ for some $k$.
If $\move(T')$ is defined then $\update(T') = \update(T)$. \qed
\end{proposition}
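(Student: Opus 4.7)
The plan is to show that truncation preserves every combinatorial choice made by $\move$, and that whenever $\move(T')$ is defined the remaining numerical choices (a rotation number for an untwist, or the interval comparisons that pick a case of \cref{conditional_splitting}) are already forced by the truncated data and therefore agree with those made for $T$.

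First I would note that $\trunc(T, k)$ leaves the underlying train tracks $\sigma$ and $\tau$, their shared branches, their switches, and their train cycles untouched. Hence the function $\length{\cdot}$, the existence of an incompatibly or compatibly combed cycle through a given $e$, and the isolation of shared branches after a split all agree for $T$ and $T'$. The shift $\shift(\cdot, d)$ used by $\trunc$ (with $d = \complexitybnd{T}_\infty - k$) is applied uniformly to every branch interval, so it lowers each $\complexitybnd{u(e)} + \complexitybnd{v(e)}$ by the same amount up to a single unit of rounding. A short check then gives $\Lambda(T) = \Lambda(T')$ and hence $\lambda(T) = \lambda(T')$; thus the shared branch $e$ selected by $\move$ is the same on both sides.

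With this common $e$ in hand I would go through the three cases in turn. For a separation the existence of an incompatibly combed cycle through $e$ is purely combinatorial and the update rule is $(\sigma', \tau', \Id, \Id)$, so there is nothing to check. For a conditional split the predicate ``$e$ is coarsely splittable'' requires that the interval comparisons in \cref{conditional_splitting} hold strictly between intervals; since each exact weight of $T$ lies inside the corresponding interval of $T'$, these strict comparisons descend to the exact weights, so the same case of \cref{conditional_splitting} is selected and the same matrices $A$ and $B$ are produced. For an untwist the defining condition $\complexity{r'} \leq \omega(T')$ is precisely what is needed to guarantee that the rotation number $r'$ computed by interval arithmetic from $T'$ coincides with the exact integer rotation number $r$ arising from $T$.

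The main obstacle is this last, half-GCD-style point: I have to verify that the certainty $\omega(T')$ really bounds the number of leading bits of a floor-quotient that are forced by the interval data, so that $\complexity{r'} \leq \omega(T')$ guarantees $r' = r$. This will involve unwinding the definitions of $\fdiv$ on intervals and of $\omega$, and tracing how the leading bits of $\min(p, q)$ and $\sum_{y \in Y} \mu(y)$ (and the corresponding sums for $\nu$) propagate through the interval division that defines the rotation number. The separation and split cases should then be an essentially routine comparison of exact weights with their enclosing intervals.
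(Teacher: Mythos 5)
The paper offers no proof here — the proposition is tagged $\qed$ as an immediate consequence of the definitions — so there is nothing authorial to compare against; I can only assess your argument on its own terms. Your overall plan (truncation leaves the combinatorics of $\sigma$, $\tau$, their switches, cycles and shared branches unchanged; then run through the three move types) is the right shape, and your treatment of the split case (the intervals of $T$, after rescaling by $2^d$, sit inside those of $T'$, so any strict interval comparison in $T'$ descends to $T$) and of the untwist case (the condition $\complexity{r} \leq \omega(T')$ is exactly what guarantees the rotation number is already forced by the leading bits) both identify the right content. A small phrasing slip: $T$ is itself interval-weighted, so you should be comparing $T$'s intervals with $T'$'s intervals, not ``exact weights'' with intervals — but the containment argument you sketch is the correct one once phrased that way.

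The genuine gap is the claim that $\Lambda(T) = \Lambda(T')$. You justify it by saying that shifting ``lowers each $\complexitybnd{u(e)} + \complexitybnd{v(e)}$ by the same amount up to a single unit of rounding,'' but that is not what $\shift(\cdot,d)$ does. A direct computation shows $\complexitybnd{\shift(I,d)} = \max(0,\,\complexitybnd{I} - d)$ exactly (there is no rounding error at all), and it is precisely the floor at zero that breaks uniformity: the subtraction is clipped, so the shift is not order-preserving on the sums. For instance, if one shared branch $e_1$ has $\complexitybnd{u(e_1)} = 100$, $\complexitybnd{v(e_1)} = 1$ (sum $101$) and another $e_2$ has $\complexitybnd{u(e_2)} = \complexitybnd{v(e_2)} = 60$ (sum $120$), then $\Lambda(T) = \{e_2\}$; but truncating with $d = 98$ sends the sums to $2$ and $0$ respectively, giving $\Lambda(T') = \{e_1\}$. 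So the ``short check'' does not exist as stated, and without $\lambda(T) = \lambda(T')$ you cannot conclude that $\move(T)$ and $\move(T')$ act on the same shared branch, which is the heart of the proposition. To close the gap you would need either to exploit the specific constraints on $k$ (always $k \leq \omega(T)$, $d = \complexitybnd{T}_\infty - k$) and on the shape of legitimate interval-weightings to rule this scenario out, or to argue that whenever the selected branches differ, the ``$\move(T') = \perp$'' clause fires — at present your proof asserts agreement of the branch choice without establishing it.
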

 
This means that if we determine the effect of $\move^k$ on a coarse approximation of $T$ then we can quickly determine $\move^k(T)$.

\begin{corollary}
Suppose that $T$ is a clean pair of interval-weighted train tracks and $T' = \trunc(T, k)$ for some $k$.
If $\move(T)$ is not defined then $\move(T')$ is also not defined. \qed
\end{corollary}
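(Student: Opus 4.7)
The plan is to obtain this corollary as the contrapositive of \cref{coarse_guide}. Recall the convention, set out just before the proposition, that $\update(T)$ is the identity update rule whenever $\move(T)$ is not defined. So the hypothesis that $\move(T)$ is undefined means that $\update(T)$ is the identity: it leaves both underlying train tracks and all interval weights unchanged.

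Suppose, for contradiction, that $\move(T')$ is defined. Then \cref{coarse_guide} forces $\update(T') = \update(T) = \Id$. The task is therefore to verify that whenever $\move(T')$ is defined, the resulting update rule is \emph{not} the identity. This reduces to a short case check over the three possible moves:
\begin{itemize}
\item If $\move(T')$ is a separation, then by construction $\sigma'$ and $\tau'$ are obtained by pushing off opposite sides of the chosen cycle $c$, so the underlying pair of train tracks changes.
\item If $\move(T')$ is a split (including follow-up splits of isolated shared branches), then either the underlying pair of train tracks changes or the accompanying linear map on weights is a non-identity projection or elementary matrix, as recorded in \cref{splitting} and \cref{conditional_splitting}.
\item If $\move(T')$ is an untwisting, then by definition the chosen rotation number $r$ satisfies $r \geq 1$, so the corresponding linear transformations $A$ and $B$ from Subsection \ref{Sec:untwisting} subtract a non-zero multiple of the relevant branch sums.
\end{itemize}

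In each case the update rule is non-trivial, contradicting $\update(T') = \Id$. The expected main obstacle is essentially bookkeeping: one must be careful that the ``identity'' used in the convention for undefined $\move$ really does refer to the same notion of update rule (same train tracks, identity matrices) as in the three cases above, so that comparison of update rules is meaningful. Once this is clarified, the argument above immediately yields that $\move(T')$ must also be undefined.
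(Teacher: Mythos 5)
Your argument is correct and it follows the same route the paper implicitly takes: deriving the corollary from \cref{coarse_guide} together with the convention that $\update(T)$ is the identity when $\move(T)$ is undefined, and then observing that a defined move always produces a non-identity update rule (since a separation or split changes the underlying pair $(\sigma,\tau)$, and an untwist with $r\geq 1$ gives a matrix with a nonzero off-diagonal entry). The paper leaves this last step unstated as obvious, but your case check fills it in cleanly.
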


\subsection{Coarse shortening}

We now have the tools needed to give a fast algorithm to compute $\move^k(T)$ for a large value of $k$.
The overall strategy is to repeatedly:
\begin{enumerate}
\item Take a coarser approximation $T'$ of $T$.
\item Compute $\move^{k'}(T')$ for a moderate value of $k'$.
\item Use this information to quickly compute $\move^{k'}(T)$
\item Compute $\move_{<}(T)$ to reduce $\sharedsizebnd{T}$ by at least $1$.
\end{enumerate}
To obtain a subquadratic algorithm, we note that we can apply the same strategy recursively when we need to compute $\move^{k'}(T')$.

To make this explicit, we define $\eee \defeq 2 \dee$.

\begin{algorithm}[ht!]
\caption{\textsc{ExpShorten}}
\label[algorithm]{exp_shorten}
\begin{algorithmic}[1]  
\Require{A clean pair of interval-weighted train tracks $T_1$.}
\Ensure{An update rule.}
    \IfThen{$\omega(T_1) \leq 1$}{\Return $\update_{<}(T_1)$}
    \State $T'_1 \gets \trunc(T_1, \omega(T_1) \cdiv 2)$ \label{Line:approximate_1} \Comment{A coarser approximation of $T_1$.} 
    \State $U_1 \gets \Call{ExpShorten}{T'_1}$ \label{Line:recurse_1} \Comment{Recurse.} 
    \State $T_2 \gets U_1(T_1)$ \label{Line:update_1} \Comment{The same moves apply.}
    \State $U_2 \gets \update_{<}(T_2)$ \label{Line:drop_1}  \Comment{Drop $\geq 1$ bit.}
    \State $T_3 \gets U_2(T_2)$ \label{Line:apply_1} \Comment{$T_3 = \move_{<}(T_2)$.}
    \State \Comment{Now repeat.}
    \State $T'_3 \gets \trunc(T_3, \omega(T_1) \cdiv 2)$ \label{Line:approximate_2} \Comment{We use $\omega(T_1)$, \emph{not} $\omega(T_3)$.}
    \State $U_3 \gets \Call{ExpShorten}{T'_3}$ \label{Line:recurse_2}
    \State $T_4 \gets U_3(T_3)$ \label{Line:update_2}
    \State $U_4 \gets \update_{<}(T_4)$ \label{Line:drop_2}
    \State \textcolor{gray}{$T_5 \gets U_4(T_4)$} \label{Line:apply_2}  \Comment{Not needed since we never use $T_5$.}
    \State \Return $U_4 \circ U_3 \circ U_2 \circ U_1$ \label{Line:compose} \Comment{The overall update we have applied.}
\end{algorithmic}
\end{algorithm}

\begin{theorem}
\label[theorem]{exp_shorten_correctness}
Suppose that $T_1$ is a pair of interval-weighted train tracks.
\Cref{exp_shorten} returns an update rule $U$ such that:
\begin{enumerate}
\item \label[conclusion]{Conc:image} $U = \update^k(T_1)$ for some $k$, and either:
\item
\begin{enumerate}
    \item \label[conclusion]{Conc:progress} $\sharedsizebnd{T_1} - \sharedsizebnd{U(T_1)} \geq \omega(T_1) / \eee$, or
    \item \label[conclusion]{Conc:undefined} $\move(U(T_1))$ is not defined.
\end{enumerate}
\end{enumerate}
Furthermore, \cref{exp_shorten} runs in $O(m \log^2(m) + n \log(m))$ time where $n = \complexity{T_1}$ and $m = \sharedsizebnd{T_1}$.
\end{theorem}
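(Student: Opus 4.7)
The plan is to proceed by strong induction on $\omega(T_1)$, establishing \cref{Conc:image}, the dichotomy between \cref{Conc:progress} and \cref{Conc:undefined}, and the running time bound in turn. For the base case $\omega(T_1) \leq 1$, the algorithm returns $\update_{<}(T_1) = \update^{k_0}(T_1)$ for some $k_0 \leq 2\bee^2$; by the defining property of $\update_{<}$, this either reduces $\sharedsizebnd{T_1}$ by at least one (giving \cref{Conc:progress} since $\omega(T_1)/\eee \leq 1$) or returns the identity because $\move(T_1)$ is undefined (giving \cref{Conc:undefined}). In the inductive step, \cref{Conc:image} follows by tracking each of $U_1, U_2, U_3, U_4$ in turn: by induction $U_1 = \update^{k_1}(T'_1)$, and iterating \cref{coarse_guide} along the sequence of moves shows $U_1 = \update^{k_1}(T_1)$ as well. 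The rules $U_2$ and $U_4$ come directly from the definition of $\update_{<}$, and $U_3$ is handled exactly like $U_1$. Composition gives $U = \update^k(T_1)$.

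For the progress/undefined dichotomy, I would split on whether $\move$ stays defined throughout the execution. If it becomes undefined at some point, then by the corollary to \cref{coarse_guide} (undefinedness on $T$ forces undefinedness on every truncation of $T$) together with the fact that $\update_{<}$ returns the identity on inputs with undefined $\move$, this undefinedness propagates through the remainder of the algorithm, yielding \cref{Conc:undefined}. Otherwise, each of the two rounds $(U_1, U_2)$ and $(U_3, U_4)$ must contribute at least $\omega(T_1)/(2\eee)$ to $\sharedsizebnd{T_1} - \sharedsizebnd{U(T_1)}$: by induction $U_1$ reduces $\sharedsizebnd{T'_1}$ by at least $\omega(T'_1)/\eee \geq \omega(T_1)/(2\eee)$, and I claim this reduction transfers to $T_1$ because truncation commutes with the linear update rules up to low-order perturbations. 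Summing the two rounds yields the desired $\omega(T_1)/\eee$ bound. The factor of $2$ in $\eee = 2\dee$ is precisely what ensures, via \cref{omega_change}, that $\omega(T_3) \geq \omega(T_1)/2$ after the first round, so that truncating to $\omega(T_1) \cdiv 2$ bits in the second round remains meaningful.

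For the running time, let $t(m, n)$ denote the worst-case time on input with $\sharedsizebnd{T_1} = m$ and $\complexity{T_1} = n$. Each recursive call operates on an input with $\sharedsizebnd$ at most $m/2 + O(1)$ and total complexity $O(m)$. The non-recursive per-frame cost is dominated by applying the four update rules and performing the two truncations: \cref{update_interval_bound} bounds $\complexity{U_i} = O(m)$, and applying $U_i$ to a weighting of complexity $O(n)$ costs $O((m+n) \log(m+n))$ via \cref{two_matrix_multiply}. This yields the recurrence
\[
t(m, n) \leq 2 t(m/2, O(m)) + O((m+n) \log(m)),
\]
which separates into a pure $m$-recurrence solving to $O(m \log^2 m)$ and a top-level $n$-contribution of $O(n \log m)$, giving the claimed bound $O(m \log^2(m) + n \log(m))$.

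The main obstacle I anticipate is making rigorous the claim that progress on the coarser $T'_1$ transfers faithfully to progress on $T_1$. Intuitively this is clear because the combinatorial moves depend only on the high-order bits (formalised by \cref{coarse_guide}), so applying $U_1$ to either object reduces $\sharedsizebnd$ by the same amount. However, a careful accounting is required to handle the interplay between the $\sharedsizebnd_1$ and $\sharedsizebnd_\infty$ terms, to control the growth of $\epsilon_\infty$ via \cref{uncertainty_change}, and to verify that the shift operation commutes with linear update maps up to controlled error at the interval endpoints.
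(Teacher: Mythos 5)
Your treatment of \cref{Conc:image}, the base case, and the running-time recurrence all track the paper's argument closely, and the observation that $\eee = 2\dee$ is chosen exactly so that $\omega(T_3) \geq \omega(T_1)/2$ (via \cref{omega_change}) is the right intuition. However, there is a genuine gap in the progress argument. You write ``by induction $U_1$ reduces $\sharedsizebnd{T'_1}$ by at least $\omega(T'_1)/\eee$,'' but the inductive hypothesis is a \emph{dichotomy}: either this reduction occurs, \emph{or} $\move(U_1(T'_1))$ is not defined. You never address the second alternative, and it is precisely the crux of the half-GCD argument. Nothing forces the recursive call to land in the ``progress'' branch: when the coarse intervals become incomparable (or the rotation number exceeds the certainty budget), $\move$ becomes undefined on the truncated object $T'_2 = U_1(T'_1)$ even though it remains perfectly well-defined on the full-precision $T_2$. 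In that event the recursive call returns early with no guaranteed reduction of $\sharedsizebnd{T'_1}$, and \cref{coarse_guide} and its corollary say nothing useful (the corollary's implication runs from fine to coarse, not the reverse). The paper closes this gap by analyzing $\move(T_2)$ directly: if the coarse comparison failed, then the competing weights around the relevant branch must agree on their leading $\omega(T'_2) \geq \omega(T_2)/2$ bits (the split case), or the rotation number exceeds $2^{\omega(T'_2)}$ (the untwist case), or the tightness drops (which halves $\sharedsizebnd{T_2}$ outright); in each case the single full-precision step $\move_{<}(T_2) = T_3$ eliminates at least $\frac{1}{2}\omega(T_1)/\eee$ from $\sharedsizebnd{\cdot}$. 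Without this case analysis, the claim that ``each of the two rounds must contribute at least $\omega(T_1)/(2\eee)$'' is unsupported.

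A secondary remark: the obstacle you anticipate at the end --- making rigorous that progress on $T'_1$ transfers to $T_1$ when $\move$ stays defined throughout --- is not actually where the difficulty lies. By \cref{coarse_guide} the same sequence of moves and hence the same linear update $U_1$ applies to both $T'_1$ and $T_1$, and since both $\sharedsizebnd{\cdot}_1$ and $\sharedsizebnd{\cdot}_\infty$ track the upper endpoints of the shared-branch intervals, the reduction is identical: the paper writes $\sharedsizebnd{T_1} - \sharedsizebnd{T_2} = \sharedsizebnd{T'_1} - \sharedsizebnd{T'_2}$ as an equality without further comment. The real work is in the case where the coarse recursion terminates without meeting its progress target, and that is what your proposal is missing.
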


\begin{proof}
We proceed by induction on $\omega(T_1)$.
When $\omega(T_1) \leq 1$ we have that $U = \update_{<}(T_1)$ and so the conclusions hold immediately.
We may assume that \cref{exp_shorten_correctness} is true for all subcalls since $\omega(T'_1), \omega(T'_3) < \omega(T_1)$.

Now by induction $U_1 = \update^{k_1}(T'_1)$ for some $k_1$ and so $U_1 = \update^{k_1}(T_1)$ by \cref{coarse_guide}.
Additionally $U_2 = \update_{<}(T_2) = \update^{k_2}(T_2)$.
The same argument shows that when the entire process is repeated we have that $U_3 = \update^{k_3}(T'_3) = \update^{k_3}(T_3)$ and $U_4 = \update_{<}(T_4) = \update^{k_4}(T_4)$.
Therefore $U = \update^{k}(T_1)$ where $k \defeq k_1 + k_2 + k_3 + k_4$, and so \cref{Conc:image} holds.

Now suppose that \cref{Conc:undefined} does not hold.
That is, $\move(U(T_1))$ is defined and so in particular all moves that we encounter within this call of the algorithm are defined.
Let $T'_2 \defeq U_1(T'_1)$ then:
\begin{itemize}
\item If $\move(T'_2)$ is defined then by induction $\sharedsizebnd{T'_1} - \sharedsizebnd{T'_2} \geq \omega(T'_1) / \eee$ and so 
\begin{align*}
\sharedsizebnd{T_1} - \sharedsizebnd{T_3} 
& \geq \sharedsizebnd{T_1} - \sharedsizebnd{T_2} \\
& = \sharedsizebnd{T'_1} - \sharedsizebnd{T'_2} \\
& \geq \omega(T'_1) / \eee \\
& \geq \frac{1}{2} \cdot \omega(T_1) / \eee.  
\end{align*}
\item If $\move(T'_2)$ is not defined then we consider the move applied by $\move(T_2)$:
\begin{itemize}
\item If $\tightness{\move(T_2)} < \tightness{T_2}$ then $\sharedsizebnd{\move(T_2)} \leq \frac{1}{2} \sharedsizebnd{T_2}$.
\item If $\move(T_2)$ performs an untwist with rotation number $r$ then $\complexity{r} \geq \omega(T'_2) \geq \omega(T_2) / 2$.
Therefore untwisting $T_2$ along $c$ eliminates at least $\omega(T_2) / 2$ bits.
\item If $\move(T_2)$ performs a split then the weights around the branch of $T_2$ that $\move$ splits must agree for the first $\omega(T'_2) \geq \omega(T_2) / 2$ bits since otherwise they would also be comparable in $T'_2$.
\end{itemize} 
In any case we have that $T_3 = \move(T_2)$ and
\begin{align*}
\sharedsizebnd{T_1} - \sharedsizebnd{T_3}
& = \sharedsizebnd{T_1} - \sharedsizebnd{\move(T_2)} \\
& \geq \frac{1}{2} \cdot \omega(T_1) / \eee. 
\end{align*}
\end{itemize}
In either case we conclude that $\sharedsizebnd{T_1} - \sharedsizebnd{T_3} \geq \frac{1}{2} \cdot \omega(T_1) / \eee$.

Now if $\sharedsizebnd{T_1} - \sharedsizebnd{T_3} \geq \omega(T_1) / \eee$ then the following moves can only increase this difference further and so \cref{Conc:progress} is guaranteed to hold.

Otherwise $1 \leq \sharedsizebnd{T_1} - \sharedsizebnd{T_3} \leq \omega(T_1) / \eee$ and so by \cref{omega_change} we have that
\begin{align*}
\omega(T_3)
& \geq \omega(T_1) - \dee \cdot \omega(T_1) / \eee \\
& \geq (1 - \dee / \eee) \cdot \omega(T_1) \\
& \geq \omega(T_1) / 2.
\end{align*}
Therefore $T'_3$ of Line~\ref{Line:approximate_2} is well defined and so this argument can be repeated for the second block.
That is,
\[ \sharedsizebnd{T_3} - \sharedsizebnd{T_5} \geq \frac{1}{2} \omega(T_1) / \eee \]
and so by the triangle inequality between $\sharedsizebnd{T_1}$, $\sharedsizebnd{T_3}$ and $\sharedsizebnd{T_5}$ we have that
\[ \sharedsizebnd{T_1} - \sharedsizebnd{T_5} 
\geq \omega(T_1) / \eee \]
and so \cref{Conc:progress} holds.

Finally, let $t_3(m, n)$ bound the time needed for $\Call{ExpShorten}{T_1}$ to run when $\complexity{T_1} \leq n$ and $\sharedsizebnd{T_1} \leq m$.
Then \cref{Line:approximate_1,,Line:approximate_2} complete in $O(n)$ time; \cref{Line:recurse_1,,Line:recurse_2} each complete in $t_3(m/2, n/2)$;
\cref{Line:update_1,,Line:update_2} complete in $O(m \log(m) + n)$ time;
\cref{Line:drop_1,,Line:drop_2} complete in $O(n)$ time; 
\cref{Line:apply_1,,Line:apply_2} complete in $O(m \log(m) + n)$ time;
and \cref{Line:compose} completes in $O(m \log(m))$ time.
Therefore
\[
t_3(m, n) \leq 2 t_3(m/2, n/2) + O(m \log(m) + n)
\]
and so $t_3(m, n) = O(m \log^2(m) + n \log(m))$.
\end{proof}

\begin{algorithm}[ht!]
\caption{\textsc{FastIntersection}}
\label[algorithm]{fast_intersection}
\begin{algorithmic}[1]
\Require{A clean pair of weighted train tracks $\Tau_1 = (\sigma, \mu, \tau, \nu)$.}
\Ensure{An integer.}
    \IfThen{$\sharedsize{\Tau_1} = 0$}{\Return $\pair{\mu, \nu}$} \label{Line:base} \Comment{If crossing.}
    \State $U \gets \Call{ExpShorten}{\{\Tau_1\}}$ \label{Line:shorten} \Comment{An update rule that makes a lot of progress.}
    \State $\Tau_2 \gets U(\Tau_1)$ \label{Line:apply} \Comment{The same moves apply.}
    \State $\Tau_3 \gets \move_{<}(\Tau_2)$
    \State \Return $\Call{FastIntersection}{\Tau_3}$ \label{Line:recurse} \Comment{Recurse.}
\end{algorithmic}
\end{algorithm}

We define $\fee \defeq 7 \bee \cdot \eee$.

\begin{theorem}
\label[theorem]{fast_intersection_correctness}
Suppose that $\Tau_1 = (\sigma, \mu, \tau, \nu)$ is a clean pair of weighted train tracks where $\mu \in V(\sigma)$ and $\nu \in V(\tau)$. 
Suppose that $\alpha \defeq C(\mu)$ and $\beta \defeq C(\nu)$.
\Cref{fast_intersection} returns $\intersection(\alpha, \beta)$.
Furthermore, \cref{fast_intersection} runs in $O((m + n) \log^2(m))$ time where $n = \complexity{\Tau_1}$ and $m = \sharedsize{\Tau_1}$.
\end{theorem}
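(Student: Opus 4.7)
The plan is to prove both correctness and the running time by induction on $\sharedsize{\Tau_1}$. The base case $\sharedsize{\Tau_1} = 0$ is immediate: $(\sigma, \tau)$ is a crossing pair, so \cref{Line:base} returns $\pair{\mu, \nu} = \intersection(\alpha, \beta)$ by \cref{pair_is_intersection}. For the inductive step, \cref{Conc:image} of \cref{exp_shorten_correctness} gives $U = \update^k(\{\Tau_1\})$ for some $k$. Since $\{\Tau_1\}$ has width-one intervals, applying \cref{coarse_guide} inductively along the $k$ moves shows that $U$ also describes the corresponding sequence of exact moves, so $\Tau_2 = U(\Tau_1) = \move^k(\Tau_1)$. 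Each split, untwist, and separation is designed to preserve the represented multi-curves; writing $\Tau_2 = (\sigma', \mu', \tau', \nu')$ we still have $C(\mu') = \alpha$ and $C(\nu') = \beta$, and the same is true after the exact $\move_{<}$ applied to form $\Tau_3$. Combining the ExpShorten progress guarantee with \cref{drop_complexity} (used inside $\move_{<}$) forces $\sharedsize{\Tau_3} < \sharedsize{\Tau_1}$, and the induction hypothesis then yields $\intersection(\alpha, \beta)$ on \cref{Line:recurse}.

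For the running time, writing $m_i \defeq \sharedsize{\Tau_i}$ and $n_i \defeq \complexity{\Tau_i}$, one iteration costs $O(m_i \log^2 m_i + n_i \log m_i)$: \cref{exp_shorten_correctness} bounds the ExpShorten call directly, and \cref{update_interval_bound} gives $\complexity{U} = O(m_i - m_{i+1})$, so applying $U$ and running $\move_{<}$ add only $O(m_i \log m_i + n_i)$ more. My central claim is a geometric drop $m_{i+1} \leq (1 - 1/\fee)\, m_i$, yielding $O(\log m)$ iterations overall. In \cref{Conc:progress} the drop is at least $\omega(\{\Tau_i\})/\eee$; since $\{\Tau_i\}$ has unit-width intervals, $\omega(\{\Tau_i\}) = \sharedsize{\Tau_i}_\infty - 1$, and the easy bound $m_i = O(\bee)\, \sharedsize{\Tau_i}_\infty$ (from $|B(\sigma \cap \tau)| \leq 2\bee$ and $\tightness{\Tau_i} \leq 3\bee$) converts this into a constant-factor reduction of $m_i$ whenever $\sharedsize{\Tau_i}_\infty$ is bounded away from one; the finite remaining cases are dispatched by a single invocation of $\move_{<}$.

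The main obstacle is \cref{Conc:undefined}: ExpShorten halts because the interval-weighted state can no longer determine the next move, and the bound there does not by itself furnish a drop in $m_i$. However, $\move$ is always defined on the exact pair $\Tau_2$, so the subsequent $\move_{<}$ strictly reduces $m_i$; to recover a constant-factor drop I would argue that triggering \cref{Conc:undefined} implies that ExpShorten has already consumed essentially all of $\omega(\{\Tau_i\})$, mirroring the two-block analysis in the proof of \cref{exp_shorten_correctness} and using \cref{uncertainty_change,omega_change} to track how each applied update eats into $\omega$. With the geometric drop in hand, $\sum_i m_i \log^2 m_i = O(m \log^2 m)$, and since \cref{update_interval_bound} gives $\sum_i \complexity{U_i} = O(m)$ we have $n_i \leq n + O(m)$ throughout, so summing $n_i \log m_i$ over the $O(\log m)$ iterations yields $O((m + n) \log^2 m)$ as required.
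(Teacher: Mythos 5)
Your overall skeleton matches the paper: induct on $\sharedsize{\Tau_1}$, use \cref{pair_is_intersection} for the base case, invoke \cref{exp_shorten_correctness} and \cref{coarse_guide} to transfer the update rule from intervals to exact weights, derive a geometric decrease $\sharedsize{\Tau_3} \leq (1-1/\fee)\sharedsize{\Tau_1}$, and resolve the resulting recurrence. The correctness half is handled the same way the paper does it.

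However, there is a genuine gap exactly where you flag ``the main obstacle,'' namely \cref{Conc:undefined}. Your proposed resolution --- that triggering \cref{Conc:undefined} ``implies that ExpShorten has already consumed essentially all of $\omega(\{\Tau_i\})$'' --- is not argued, and it is also not the route the paper takes. Indeed a drop in $\omega$ is (by \cref{omega_change}) already bounded by a multiple of the drop in $\sharedsizebnd{\cdot}$, so trying to deduce a shared-size drop from an $\omega$-drop is circular unless you find an independent reason for $\omega$ to drop. The paper instead argues directly about the exact step $\Tau_3 = \move_{<}(\Tau_2)$: since $\move(\{\Tau_2\})$ is undefined, the next move on the \emph{exact} pair $\Tau_2$ must be one of (i) a tightness-reducing move, (ii) an untwist with rotation number $r$ so large that $\complexity{r} > \omega(\{\Tau_2\})$, or (iii) a split where the two competing weights agree in their top $\omega(\{\Tau_2\})$ bits. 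In each subcase one reads off, exactly as in the interior case analysis of the proof of \cref{exp_shorten_correctness}, that $\sharedsize{\Tau_3} \leq (1 - 1/\fee)\sharedsize{\Tau_2}$. You need this case analysis; asserting that $\omega$ has been ``consumed'' does not substitute for it.

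Two smaller points. First, the identity $\omega(\{\Tau_i\}) = \sharedsize{\Tau_i}_\infty - 1$ is wrong: for unit intervals $\omega(\{\Tau\}) = \complexitybnd{\{\Tau\}}_\infty - 1$, which is a max-of-max, whereas $\sharedsize{\Tau}_\infty$ is a max-of-sum; they can differ by a factor of two. This does not break your argument but the inequality needs the correct direction, $\omega(\{\Tau\}) \geq \sharedsize{\Tau}_\infty/2 - 1$, rather than an equality. Second, your running-time bookkeeping (tracking $n_i \leq n + O(m)$ via $\sum_i \complexity{U_i} = O(m)$ from \cref{update_interval_bound}) is a valid way to justify the recurrence; the paper expresses the same recurrence directly and appeals to the regularity condition of the master theorem, so this is a cosmetic difference rather than a deviation.
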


\begin{proof}
We proceed by induction on $\sharedsize{\Tau_1}$.
As the base case, when $\sharedsize{\Tau_1} = 0$ we have that Line~\ref{Line:base} returns $\intersection(\alpha, \beta)$ by \cref{pair_is_intersection}.

Otherwise, let $T_1 \defeq \{\Tau_1\}$ and $T_2 \defeq U(T_1)$ and note that $T_2 = \{\Tau_2\}$ by \cref{coarse_guide}.
Now, by \cref{exp_shorten_correctness}, we have that $\Tau_2 = \move^k(\Tau_1)$ for some $k$ and either:
\begin{enumerate}
\item $\sharedsize{\Tau_1} - \sharedsize{\Tau_2} = \sharedsizebnd{T_1} - \sharedsizebnd{T_2} \geq \omega(T_1) / \eee \geq \sharedsize{\Tau_1} / \fee$, or

\item $\move(T_2)$ is not defined.
\end{enumerate}
In the former case
\[ \sharedsize{\Tau_2} \leq (1 - 1 / \fee) \cdot \sharedsize{\Tau_1}. \]
In the latter case, the same case analysis as in the proof of \cref{exp_shorten_correctness} shows that
\[ \sharedsize{\Tau_3} \leq (1 - 1 / \fee) \cdot \sharedsize{\Tau_2}. \]
Therefore, in either case we have that
\[ \sharedsize{\Tau_3} \leq (1 - 1 / \fee) \cdot \sharedsize{\Tau_1} \]
and so the recursive call to \textsc{FastIntersection} on Line~\ref{Line:recurse} is well defined and returns $\intersection(\alpha, \beta)$ by induction.

Finally, let $t_4(m, n)$ bound the time needed for $\Call{FastIntersection}{\Tau_1}$ to run when $\complexity{\Tau_1} \leq n$ and $\sharedsize{\Tau_1} \leq m$.
Note that $\complexity{T_1} = 2 \complexity{\Tau_1} = 2 n$ and $\sharedsizebnd{T_1} = \sharedsize{\Tau_1} = m$.
Therefore \cref{Line:shorten} runs in $O(m \log^2(m) + n \log(m))$ time by \cref{exp_shorten_correctness}; 
\cref{Line:apply} runs in $O(m \log^2(m) + n \log(m))$ time since $\complexity{U} \leq \cee \cdot \sharedsize{\Tau_1} \leq \cee \cdot m$ by \cref{update_interval_bound}; 
and \cref{Line:recurse} runs in $t_4((1 - 1 / \fee) \cdot m, n)$ time.

Therefore
\[ t_4(m, n) \leq t_4((1 - 1 / \fee) \cdot m, n) + O(m \log^2(m) + n \log(m)) \]
and so $t_4(m, n) = O(m \log^2(m) + n \log^2(m)) = O((m + n) \log^2(m))$ by the regularity condition of the master theorem.
\end{proof}

This proves \cref{GeometricIntersection}.

Additionally, starting \cref{fast_intersection} from $(\sigma, \mu, \sigma, \mu)$ while tracking the update rules applied gives an algorithm to shorten $(\sigma, \mu)$ in $O(n \log^2(n))$ time. 
This proves \cref{CurveShortening}.

\section{Remarks and questions}

\begin{remark}
To implement \cref{exp_shorten,fast_intersection} efficiently one should take into account that:
\begin{itemize}
\item 
The constants that appear throughout the paper were chosen to shorten the proofs; 
several of them can be reduced.
\item 
In \cref{exp_shorten} if $\sharedsizebnd{T_3} \leq \sharedsizebnd{T_1} - \omega(T_1) / \eee$ then we may immediately return $U_2 \circ U_1$ and so skip Lines~\ref{Line:approximate_2} -- \ref{Line:apply_2}.
\item 
In \cref{fast_intersection} it is likely more efficient to revert to \cref{naive_intersection} once $\sharedsize{\Tau_1}$ is small (below some uniform bound) instead of recursing all the way to the base case where $\sharedsize{\Tau_1} = 0$.
\item 
Branches of $T = (\sigma, u, \tau, v)$ which are disjoint from $\sigma \cap \tau$ are never modified within \cref{exp_shorten}.
Therefore if these weights are passed by reference then this algorithm can be improved to run in $O(m \log^2(m) + \log(n) \log(m))$ and so  \cref{fast_intersection} can be made to run in $O((m + \log(n)) \log^2(m))$ time.
\qedhere
\end{itemize}
\end{remark}

\begin{remark}
If we allows \emph{stops} -- vertices of valence one on $\bdy S$ -- in our train tracks, then we can represent integral lamination with arcs.
In this case, \cref{fast_intersection} generalises to compute $\intersection(\alpha, \beta)$ for any pair of integral laminations, including when $\alpha$ and / or $\beta$ are arcs, in $O(n \log^2(n))$ time where $n = \complexity{\Delta(\alpha)} + \complexity{\Delta(\beta)}$.
\end{remark}

\begin{remark}
Suppose that $G$ is a group and that $F$ is a finite generating set for $G$.
An \emph{exponent word}~\cite[Definition~3.1]{GurevichSchupp07} over $F$ has the following form. 
\[
w = f_k^{p_k} \cdots f_2^{p_2} \cdot f_1^{p_1}
\]
where $f_i \in F$ and $p_i \in \ZZ$ are integers. 
(These also called \emph{zipped words} in~\cite[page~183]{Dynnikov22}.)
The \emph{complexity} of such a $w$ is $\complexity{w} \defeq \sum_i \complexity{p_i}$.

We claim that for $d \geq 3$ the word problem for exponent words in $\GL(d, \ZZ)$ generated by elementary matrices can also be solved in $O(n \log^2(n))$ time where $n = \complexity{w}$.
We prove this by reducing to \cref{many_matrix_multiply}.
We first express each syllable $E_i^{p_i}$ as a product of $O(\log(p_i))$ elementary matrices~\cite[Lemma~2.3]{Riley05}.
The product is obtained by converting the given binary representation of $p_i$ into its \emph{Zeckendorf representation}: a sum of non-consecutive Fibonacci numbers.
Preprocessing each $p_i$ this way takes $O(\complexity{p_i} \log^2(\complexity{p_i}))$ time~\cite[Section~1.7.2]{BrentZimmermann10}.
Thus preprocessing the entire exponent word takes $O(n \log^2(n))$ time, proving the claim.

In similar fashion, we obtain a $O(n \cdot \poly(\log(n)))$ time solution to the word problem for exponent words in the mapping class group when the generators are Dehn twists by applying \cref{construct_twist_delta_coordinate} to the base cases (\cref{Line:base_0,Line:base_1}) of \cref{delta_coordinate}.

We end by noting that in both cases the computational complexity of the word problem for exponent words is very sensitive to the exact choice of generators.
\end{remark}


In work in progress, the first author and Webb, have given a polynomial-time algorithm for the conjugacy problem in the mapping class group.

\begin{question}
Is there an algorithm for the conjugacy problem in the mapping class group whose running time is subquadratic in $n$? 
\end{question}


Dynnikov gave a \emph{uniform} quadratic-time solution (in the RAM model) to the word problem in braid groups~\cite[page~215, Proposition~1.13]{DDRW08}.
That is, the constants do not depend on the number of strands.
Dylan Thurston~\cite[page~2]{Thurston08} points out that this technique generalises to all surfaces.
This leads us to ask the following.

\begin{question}
Is there an algorithm for the word problem in the mapping class group whose running time is subquadratic in $n$ and has constants independent of $S$?
\end{question}




\begin{appendices}

\section{Splitting}

In this appendix we catalogue all possible splits of $\Tau = (\sigma, \mu, \tau, \nu)$ up to rotation, reflection and interchanging $\sigma$ and $\tau$.
In each case we draw $\sigma$ in red, $\tau$ in blue and $\sigma \cap \tau$ in black.

\begin{longtable}[t]{cccc}
& Source & Target & Change in $\tightness{\Tau}$ \\
\midrule
\row{} & \splitsource{red}{red}{blue}{blue} & \splittarget{red}{red}{red}{blue}{blue}{blue}{transparent}{transparent} & -1 \\
\row{} & \splitsource{red}{blue}{red}{blue} & \splittarget{blue}{transparent}{red}{red}{transparent}{blue}{red}{blue} & -1 \\
\row{} & \splitsource{black}{blue}{red}{red} & \splittarget{blue}{blue}{black}{red}{red}{red}{red}{transparent} & -1 \\
\row{} & \splitsource{black}{red}{blue}{red} & \splittarget{red}{red}{black}{blue}{transparent}{red}{blue}{red} & -1 \\
\row{} & \splitsource{black}{red}{blue}{black} & \splittarget{red}{red}{black}{blue}{blue}{black}{blue}{red} & -2 \\
\row{} & \splitsource{black}{red}{black}{blue} & \splittarget{red}{red}{black}{black}{blue}{blue}{black}{transparent} & 0 \\
\row{} & \splitsource{black}{black}{red}{blue} &  \splittarget{black}{black}{black}{red}{transparent}{blue}{red}{blue} & 0 \\
\caption{Splitting at $e$.}
\label{splitting}
\end{longtable}

\begin{longtable}[t]{ccccc}
& Source & Target & Condition & Change in $\tightness{\Tau}$ \\
\midrule
\row{} & \splitsource{black}{red}{black}{red} & \splittarget{red}{red}{black}{black}{red}{red}{black}{transparent} & $\mu(a) > \mu(b)$ & 0 \\
& & \splittarget{red}{red}{black}{black}{red}{red}{blue}{transparent} & $\mu(a) = \mu(b)$ & -1 \\
& & \splittarget{red}{red}{black}{black}{red}{red}{blue}{red} & $\mu(a) < \mu(b)$ & -1 \\
\midrule
\row{} & \splitsource{black}{black}{red}{red} & 
\splittarget{black}{black}{black}{red}{red}{red}{red}{transparent} & $\mu(a) > \mu(b)$ & -1 \\
& & \splittarget{black}{black}{black}{red}{red}{red}{transparent}{transparent} & $\mu(a) = \mu(b)$ & -2 \\
& & \splittarget{black}{black}{black}{red}{red}{red}{transparent}{red} & $\mu(a) < \mu(b)$ & -1 \\
\midrule
\row{} & \splitsource{black}{black}{black}{red} & \splittarget{black}{black}{black}{black}{red}{red}{black}{transparent}& $\mu(a) > \mu(b)$ & 0 \\
& & \splittarget{black}{black}{black}{black}{red}{red}{blue}{transparent} & $\mu(a) = \mu(b)$ & -2 \\
& & \splittarget{black}{black}{black}{black}{red}{red}{blue}{red} & $\mu(a) < \mu(b)$ & -1 \\
\midrule
\row{} & \splitsource{black}{black}{black}{black} & \splittarget{black}{black}{black}{black}{black}{black}{black}{transparent} & $\mu(a) > \mu(b)$, $\nu(a) > \nu(b)$ & 0 \\
& & \splittarget{black}{black}{black}{black}{black}{black}{blue}{transparent} & $\mu(a) = \mu(b)$, $\nu(a) > \nu(b)$ & -3 \\
& &\splittarget{black}{black}{black}{black}{black}{black}{blue}{red} & $\mu(a) < \mu(b)$, $\nu(a) > \nu(b)$ & -1 \\
& & \splittarget{black}{black}{black}{black}{black}{black}{transparent}{transparent} & $\mu(a) = \mu(b)$, $\nu(a) = \nu(b)$ & -5 \\
& & \vdots & \vdots & \\
& & \multicolumn{3}{c}{And $5$ more cases up to symmetry in $\mu$ and $\nu$.} \\
\caption{Conditional splitting at $e$.}
\label{conditional_splitting}
\end{longtable}

\section{Cut disk tightness verification}
\label[appendix]{cut_disk_verification}

We encode a complementary disk $R$ of a pair of train tracks as a string by walking $\bdy R$ anti-clockwise recording the colours of the boundary segments that we encounter (\texttt{r}, \texttt{b} or \texttt{g} for red, blue or gray) and the types of transitions between them (\texttt{-}, \texttt{L} or \texttt{V} for smooth, corner or cusp).
We call an occurrence of \texttt{r-g-b} or \texttt{b-g-r} a \emph{colour change} in $\bdy R$.
Under this scheme the four cases of \cref{Fig:branch_cross_disk} are encoded as pairs of strings:
\begin{itemize}
\item[I:] \texttt{rLb-g-bL} and \texttt{rLb-g-bL}
\item[J:] \texttt{rLb-g-} and \texttt{rVg-bL}
\item[C:] \texttt{rVgV} and \texttt{r-g-}
\item[S:] \texttt{rVg-} and \texttt{rVg-}
\end{itemize}
We build all possible combinations of $R$ (and then $R'$) from these seeds by repeatedly replacing the first occurrence of \texttt{g} with either \texttt{g-rLb-g}, \texttt{g-bLr-g}, \texttt{gVg}, \texttt{g-r-g} or \texttt{g-b-g}.
We note that we can stop our exploration when either:
\begin{itemize}
\item an \texttt{r-g-r} or \texttt{b-g-b} is created, or
\item once $R$ (and then $R'$) matches one of the cases in \cref{Fig:high_index_regions} and $Q$ is not a monochromatic cusped bigon.
\end{itemize}
This was done via the following Python script:


\begin{lstlisting}
def index4(boundary):  # Return 4 * index.
    return 4 - 2 * boundary.count("V") - boundary.count("L")

def num_colour_changes(boundary):
    boundary += boundary[:3]  # So we can see colour changes that wrap over the end.
    return boundary.count("r-g-b") + boundary.count("b-g-r")

def is_bigon(boundary):
    return index4(boundary) == 0 and "L" not in boundary

def is_monochromatic(boundary):
    return "r" not in boundary or "b" not in boundary

def is_monochromatic_bigon(boundary):
    return is_bigon(boundary) and is_monochromatic(boundary)

def is_legal(boundary):
    return num_colour_changes(boundary) >= (1 if is_bigon(boundary) else index4(boundary))

def substitutes(boundary):
    i = boundary.find("g")
    for rule in ["g-rLb-", "g-bLr-", "gV", "g-r-", "g-b-"]:
        new = boundary[:i] + rule + boundary[i:]
        if "r-g-r" not in new and "b-g-b" not in new:
            yield new

def tree(left, right):
    whole = left[2:-2] + right[2:-2]
    if not is_legal(left) or is_monochromatic_bigon(whole):
        for new in substitutes(left):
            yield from tree(new, right)
    elif not is_legal(right) or is_monochromatic_bigon(whole):
        for new in substitutes(right):
            yield from tree(left, new)
    else:
        yield whole

SEEDS = [
    ("rLb-g-bL", "rLb-g-bL"),  # I
    ("rLb-g-", "rVg-bL"),  # J
    ("rVgV", "r-g-"),  # C
    ("rVg-", "rVg-"),  # S
]
for left, right in SEEDS:
    for reachable in tree(left, right):
        assert is_legal(reachable)
\end{lstlisting}

\section{Standard pairs tightness verification}
\label[appendix]{standard_pairs_verification}

Following the same notation as \cref{cut_disk_verification}, we verify that all combinations of annuli attached to pants result in legal regions.
This was done via the following Python script which rely on the same helper functions as in \cref{cut_disk_verification}:

\begin{lstlisting}
from itertools import product

ANNULI = ["gV", "g-b-g-r-", "g-r-g-b-", "g-bLr-", "g-rLb-"]
PANTS = [
    ("", "", "",),
    ("gVrV",), ("g-r-g-b-", "",),
    ("gVgV",), ("gV", "",),
    ("gVrV",), ("gVbV",), ("g-b-g-r-g-r-g-b-",),
]

for pants in PANTS:
    for annuli in product(ANNULI, repeat=len(pants)):
        boundary = "".join(piece for pair in zip(pants, annuli) for piece in pair)
        assert is_legal(boundary)
\end{lstlisting}

\section{Constants}

We summarise the constants of this paper here for convenience.

\begin{align*}
\bee & \defeq 6(3g - 3 + b) \\
\cee & \defeq 16 \bee^4 \\
\dee & \defeq \cee + 1 \\
\eee & \defeq 2 \dee \\
\fee & \defeq 7 \bee \cdot \eee
\end{align*}

\end{appendices}

\bibliographystyle{plain}
\bibliography{bibliography}

\begin{thebibliography}{10}

\bibitem{AHT06}
Ian Agol, Joel Hass, and William Thurston.
\newblock The computational complexity of knot genus and spanning area.
\newblock {\em Trans. Amer. Math. Soc.}, 358(9):3821--3850, 2006.

\bibitem{teruaki}
Kazushi Ahara.
\newblock teruaki (computer software).
\newblock \url{http://www.aharalab.sakura.ne.jp/teruaki.html}, 1997.

\bibitem{flipper}
Mark Bell.
\newblock flipper (computer software).
\newblock \url{pypi.python.org/pypi/flipper}, 2013.

\bibitem{curver}
Mark Bell.
\newblock curver (computer software).
\newblock \url{pypi.python.org/pypi/curver}, 2017.

\bibitem{BrentZimmermann10}
Richard~P. Brent and Paul Zimmermann.
\newblock {\em Modern computer arithmetic}.
\newblock Cambridge Monographs on Applied and Computational Mathematics. Cambridge University Press, 2010.

\bibitem{XTrain}
Peter Brinkmann.
\newblock Xtrain (computer software).
\newblock {\url{https://gitorious.org/xtrain}}, 2009.

\bibitem{Chillingworth69}
D.~R.~J. Chillingworth.
\newblock A finite set of generators for the homeotopy group of a non-orientable surface.
\newblock {\em Proc. Cambridge Philos. Soc.}, 65:409--430, 1969.

\bibitem{CLRS22}
T.H. Cormen, C.E. Leiserson, R.L. Rivest, and C.~Stein.
\newblock {\em Introduction to algorithms, fourth edition}.
\newblock MIT Press, 2022.

\bibitem{Dehn11}
M.~Dehn.
\newblock \"{U}ber unendliche diskontinuierliche {G}ruppen.
\newblock {\em Math. Ann.}, 71(1):116--144, 1911.

\bibitem{Dehn38}
M.~Dehn.
\newblock Die {G}ruppe der {A}bbildungsklassen.
\newblock {\em Acta Math.}, 69(1):135--206, 1938.
\newblock Das arithmetische Feld auf Fl\"{a}chen.

\bibitem{Dehn87}
Max Dehn.
\newblock {\em Papers on group theory and topology}.
\newblock Springer-Verlag, New York, 1987.
\newblock Translated from the German and with introductions and an appendix by John Stillwell, With an appendix by Otto Schreier.

\bibitem{DDRW08}
Patrick Dehornoy, Ivan Dynnikov, Dale Rolfsen, and Bert Wiest.
\newblock {\em Ordering braids}, volume 148 of {\em Mathematical Surveys and Monographs}.
\newblock American Mathematical Society, Providence, RI, 2008.

\bibitem{Dynnikov22}
Ivan Dynnikov.
\newblock Counting intersections of normal curves.
\newblock {\em Journal of Algebra}, 607:181--231, oct 2022.

\bibitem{EricksonNayyeri13}
Jeff Erickson and Amir Nayyeri.
\newblock {T}racing compressed curves in triangulated surfaces.
\newblock {\em Discrete Comput. Geom.}, 49(4):823--863, 2013.

\bibitem{Farb06}
Benson Farb.
\newblock {\em {P}roblems on mapping class groups and related topics}, volume~74.
\newblock American Mathematical Soc., 2006.

\bibitem{FarbMargalit12}
Benson Farb and Dan Margalit.
\newblock {\em {A} primer on mapping class groups}, volume~49 of {\em Princeton Mathematical Series}.
\newblock Princeton University Press, Princeton, NJ, 2012.

\bibitem{FLP12}
Albert Fathi, Fran{\c{c}}ois Laudenbach, and Valentin Po{\'e}naru.
\newblock {\em Thurston's work on surfaces}, volume~48 of {\em Mathematical Notes}.
\newblock Princeton University Press, Princeton, NJ, 2012.
\newblock Translated from the 1979 French original by Djun M. Kim and Dan Margalit.

\bibitem{FortnowHomer03}
Lance Fortnow and Steve Homer.
\newblock A short history of computational complexity.
\newblock {\em Bull. Eur. Assoc. Theor. Comput. Sci. EATCS}, 80:95--133, 2003.

\bibitem{GurevichSchupp07}
Yuri Gurevich and Paul Schupp.
\newblock Membership problem for the modular group.
\newblock {\em SIAM J. Comput.}, 37(2):425--459, 2007.

\bibitem{Trains}
Toby Hall.
\newblock Trains (computer software).
\newblock {\url{http://pcwww.liv.ac.uk/maths/tobyhall/software/}}, 2007.

\bibitem{Dynn}
Toby Hall.
\newblock Dynn (computer software).
\newblock {\url{http://pcwww.liv.ac.uk/maths/tobyhall/software/}}, 2018.

\bibitem{Hamidi-Tehrani00}
Hessam Hamidi-Tehrani.
\newblock On complexity of the word problem in braid groups and mapping class groups.
\newblock {\em Topology Appl.}, 105(3):237--259, 2000.

\bibitem{HarveyHoeven21}
David Harvey and Joris van~der Hoeven.
\newblock Integer multiplication in time {$O(n \log n)$}.
\newblock {\em Ann. of Math. (2)}, 193(2):563--617, 2021.

\bibitem{HopcroftUllman79}
John~E. Hopcroft and Jeffrey~D. Ullman.
\newblock {\em Introduction to automata theory, languages, and computation}.
\newblock Addison-Wesley Series in Computer Science. Addison-Wesley Publishing Co., Reading, MA, 1979.

\bibitem{Agol21}
Ian~Agol (https://mathoverflow.net/users/1345/ian agol).
\newblock Quantitative word problem for 3-manifold groups.
\newblock MathOverflow.
\newblock URL:https://mathoverflow.net/q/382652 (version: 2021-02-05).

\bibitem{Humphries79}
Stephen~P. Humphries.
\newblock Generators for the mapping class group.
\newblock In {\em Topology of low-dimensional manifolds ({P}roc. {S}econd {S}ussex {C}onf., {C}helwood {G}ate, 1977)}, volume 722 of {\em Lecture Notes in Math.}, pages 44--47. Springer, Berlin, 1979.

\bibitem{Lickorish64}
W.~B.~R. Lickorish.
\newblock A finite set of generators for the homeotopy group of a {$2$}-manifold.
\newblock {\em Proc. Cambridge Philos. Soc.}, 60:769--778, 1964.

\bibitem{BH}
W.~Menasco and J.~Ringland.
\newblock Bh (computer software).
\newblock {\url{http://copper.math.buffalo.edu/BH/index.html}}, 1999--2011.

\bibitem{Moller08}
Niels M\"{o}ller.
\newblock On {S}chönhage's algorithm and subquadratic integer {GCD} computation.
\newblock {\em {M}athematics of {C}omputation}, 77(261):589--607, 2008.

\bibitem{Mosher99}
Lee Mosher.
\newblock Mapping class groups are automatic.
\newblock {\em Annals of Mathematics}, 142:303--384, 1995.

\bibitem{Mosher03}
Lee Mosher.
\newblock Train track expansions of measured foliations, 2003.

\bibitem{Nielsen43}
Jakob Nielsen.
\newblock Abbildungsklassen endlicher {O}rdnung.
\newblock {\em Acta Math.}, 75:23--115, 1943.

\bibitem{OlshanskiiShpilrain24}
Alexander Olshanskii and Vladimir Shpilrain.
\newblock Linear average-case complexity of algorithmic problems in groups, 2024.

\bibitem{PennerHarer92}
R.~C. Penner and J.~L. Harer.
\newblock {\em Combinatorics of train tracks}, volume 125 of {\em Annals of Mathematics Studies}.
\newblock Princeton University Press, Princeton, NJ, 1992.

\bibitem{Penner82}
Robert~Clark Penner.
\newblock {\em A {COMPUTATION} {OF} {THE} {ACTION} {OF} {THE} {MAPPING} {CLASS} {GROUP} {ON} {ISOTOPY} {CLASSES} {OF} {CURVES} {AND} {ARCS} {IN} {SURFACES}}.
\newblock ProQuest LLC, Ann Arbor, MI, 1982.
\newblock Thesis (Ph.D.)--Massachusetts Institute of Technology.

\bibitem{Riley05}
T.~R. Riley.
\newblock Navigating in the {C}ayley graphs of {${\rm SL}_N(\mathbb{Z})$} and {${\rm SL}_N(\mathbb{F}_p)$}.
\newblock {\em Geom. Dedicata}, 113:215--229, 2005.

\bibitem{SSS08}
Marcus Schaefer, Eric Sedgwick, and Daniel Stefankovic.
\newblock Computing {D}ehn twists and geometric intersection numbers in polynomial time.
\newblock In {\em Proceedings of the 20th Annual Canadian Conference on Computational Geometry, Montr{\'{e}}al, Canada, August 13-15, 2008}, 2008.

\bibitem{Schleimer08}
Saul Schleimer.
\newblock Polynomial-time word problems.
\newblock {\em Comment. Math. Helv.}, 83(4):741--765, 2008.

\bibitem{Shpilrain24}
Vladimir Shpilrain.
\newblock Open problems in combinatorial group theory, 2024.

\bibitem{Takarajima00a}
Itaru Takarajima.
\newblock A combinatorial representation of curves using train tracks.
\newblock {\em Topology Appl.}, 106(2):169--198, 2000.

\bibitem{Takarajima00b}
Itaru Takarajima.
\newblock A combinatorial representation of {$\partial{\bf D}^2$} using train tracks.
\newblock {\em Topology Appl.}, 106(2):199--216, 2000.

\bibitem{Thurston08}
Dylan Thurston.
\newblock On geometric intersection of curves in surfaces.

\bibitem{ThurstonTravaux79}
W.~Thurston.
\newblock {\em Travaux de {T}hurston sur les surfaces}.
\newblock Asterisque-Societe Mathematique de France. Societe Mathematique de France, 1979.

\bibitem{Thurston80}
William Thurston.
\newblock Geometry and topology of three-manifolds, 1980.

\bibitem{Thurston82}
William~P. Thurston.
\newblock Three-dimensional manifolds, kleinian groups and hyperbolic geometry.
\newblock {\em Bulletin (New Series) of the American Mathematical Society}, 6(3):357 -- 381, 1982.

\bibitem{branched}
Alden Walker.
\newblock branched (computer software).
\newblock \url{https://github.com/aldenwalker/branched}, 2014.

\end{thebibliography}

\end{document}